\numberwithin{equation}{section}
\newtheorem{theorem}[subsection]{Theorem}
\newtheorem{corollary}[subsection]{Corollary}
\newtheorem{lemma}[subsection]{Lemma}
\newtheorem{proposition}[subsection]{Proposition}
\theoremstyle{definition}
\newtheorem{definition}[subsection]{Definition}
\newtheorem{remark}[subsection]{Remark}
\newtheorem{example}[subsection]{Example}
\newtheorem{examples}[subsection]{Examples}
\newcommand{\bC}{\mathbb{C}}
\newcommand{\Z}{\mathbb{Z}}
\newcommand{\F}{\mathbb{F}}
\newcommand{\Q}{\mathbb{Q}}
\newcommand{\cL}{\mathcal{L}}
\newcommand{\HH}{\mathsf{HH}}
\newcommand{\HR}{\mathsf{HR}}
\newcommand{\THH}{\mathsf{THH}}
\newcommand{\ssets}{\mathsf{sSets}}
\DeclareMathOperator{\Tor}{Tor}
\newcommand{\ie}{\emph{i.e.}}
\newcommand{\ra}{\rightarrow}
\title{Stability of Loday constructions}
\author{Ayelet Lindenstrauss}
\address{Mathematics Department, Indiana University, 831 East Third Street,
  Bloomington, IN 47405, USA}  
\email{alindens@indiana.edu}
\author{Birgit Richter} 
\address{Fachbereich Mathematik der Universit\"at Hamburg,
  Bundesstrasse 55, 20146 Hamburg, Germany} 
\email{birgit.richter@uni-hamburg.de}
\date{\today}
\begin{document}

\begin{abstract}
We study the question for which commutative ring spectra $A$ the
tensor of a simplicial set $X$ with $A$, $X \otimes A$, is  a
stable invariant in the sense that it depends only on the homotopy
type of $\Sigma X$. We prove several
structural properties about different notions of stability,
corresponding to different levels of invariance required of $X\otimes
A$. 
We establish stability in important cases, such as complex and
real periodic topological K-theory, $KU$ and $KO$. 
\end{abstract}
  
\maketitle

\section{Introduction}
For any simplicial set $X$ and any commutative ring spectrum $A$ one
can form the tensor of $X$ with $A$, 
$X \otimes A$. An important special case of this construction is
topological Hochschild homology of $A$, $\THH(A)$, which is $S^1
\otimes A$. In the following we will often work with commutative
$R$-algebras for some commutative ring spectrum $R$.  We will sometimes take
coefficients in a commutative
$A$-algebra $C$, which requires working with \emph{pointed} simplicial
sets $X$; we denote the corresponding object (whose definition we recall in Section
\ref{defofcl} below)
by $\cL_X^R(A; C)$.  When $C=A$,
 $\cL_X^S(A;A)$ is just $X \otimes A$, and $\cL_{S^1}^R(A; C)$
is $\THH^R(A; C)$. 

As topological Hochschild homology is the target
of a trace map from algebraic K-theory 
\begin{equation} \label{eq:trace}
  K(A) \ra \THH(A)
\end{equation}
it has been calculated in many cases. Higher order topological Hochschild homology,
which is $\cL_{S^n}^R(A; C)$, has also been determined 
in many important classes of examples, see for instance
\cite{bhlprz,dlr,hhlrz,schlichtkrull,stonek}. In \cite{bhlprz} we
develop several tools for calculating $\cL_{\Sigma
  X}^R(A;C)$. However, if we want to determine the homotopy type of
$\cL_X^R(A; C)$ and $X$ doesn't happen to be a suspension, then the
range of methods is 
much sparser.

Rognes' redshift conjecture \cite{redshift} predicts that applying algebraic
K-theory raises chromatic level by one in good cases. In particular,
higher chromatic phenomena could be detected by iterated algebraic
K-theory of rings.  
If $A$ is a commutative ring spectrum, then so are $K(A)$ and
$\THH(A)$, and as the trace map is 
a map of commutative ring spectra, one can iterate
the trace map from \eqref{eq:trace} to obtain
\[ K(K(A)) \ra \THH(\THH(A))\]
and one doesn't have to stop at two-fold iterations. As $X \otimes A$
is the tensor of $A$ with $X$ in the category of commutative ring
spectra \cite[chapter VII, \S 2, \S 3]{ekmm}, one can identify
\[ \THH(\THH(A)) = S^1 \otimes (S^1 \otimes A)\]
with $(S^1 \times S^1) \otimes A$ and this is \emph{torus homology of
  $A$}. Similarly, any $n$-fold iteration of algebraic K-theory of $A$
has an iterated trace map to $(S^1)^n \otimes A$. There are
calculations of torus homology of $H\F_p$ for small $n$ by Rognes,
Veen \cite{veen} and Ausoni-Dundas, but a general result is
missing. However, the homotopy type of $S^n \otimes H\F_p$ is known for
every $n$ and for small $n$  $(S^1)^n \otimes A$ splits as follows: We have that 
$\Sigma (S^1)^n \simeq \Sigma (\bigvee_{i=1}^n \bigvee_{\binom{n}{i}} S^1)$ and
one obtains for small $n$
\[ (S^1)^n \otimes H\F_p \simeq (\bigvee_{i=1}^n \bigvee_{\binom{n}{i}} S^1) \otimes H\F_p.\]

This gave rise to the question whether $\cL_X^R(A; C)$ is a
\emph{stable invariant}, \ie,  whether the 
homotopy type of $\cL_X^R(A; C)$ only depends on the homotopy type of
$\Sigma X$. There are positive results: $\cL_X^{Hk}(HA)$ is a stable
invariant if $k$ is a field and $A$ is a commutative Hopf algebra over
$k$ \cite[Theorem 1.3]{bry} or if $k$ is an arbitrary commutative ring
and $A$ is a smooth $k$-algebra \cite[Example 2.6]{dt}. But Dundas and Tenti also 
show \cite[\S 3.8]{dt} that $\cL_X^{H\Q}(H\Q[t]/t^2)$ is \emph{not} a
stable invariant. They show that  
$\cL_{S^1 \vee S^1 \vee S^2}^{H\Q}(H\Q[t]/t^2)$ and $\cL_{S^1 \times
  S^1}^{H\Q}(H\Q[t]/t^2)$ differ and that reducing the coefficients
from   $H\Q[t]/t^2$  to  $H\Q$ doesn't eliminate
this discrepancy. This also implies that $\cL_X^S(H\Q[t]/t^2)$ and
$\cL_X^S(H\Q[t]/t^2; H\Q)$ are not stable invariants because $S_\Q \simeq H\Q$.  

Our aim is to investigate the question of stability in a systematic
manner. We start by defining  several different notions of stability. Instead of asking for
equivalent homotopy types  
of $\cL_X^R(A;C)$ and $\cL_Y^R(A;C)$ if $\Sigma X \simeq \Sigma Y$ we
are asking when we actually  
get an equivalence $\cL_X^R(A;C) \simeq \cL_Y^R(A;C)$ of augmented
commutative $C$-algebras. There  
are intermediate notions that ask for less structure to be preserved,
for instance, that the  
equivalence $\cL_X^R(A;C) \simeq \cL_Y^R(A;C)$ is one of commutative $R$-algebras
or of $C$- or $R$-modules.  

We establish that stability is preserved by several constructions such
as base-change and products but we also  
show which procedures do \emph{not} preserve stability. For instance
stability is not a transitive property: if $R \ra A$ and $A \ra B$
satisfy stability then this does not imply that $R \ra B$ has this property.

A central purpose of this paper is to establish new cases where
stability holds. For instance  
for any regular quotient $R \ra R/(a_1,\ldots, a_n)$ of a commutative
ring $R$ we obtain  
stability for the induced map of commutative ring spectra $HR \ra
HR/(a_1, \ldots, a_n)$. 
Free commutative ring spectra generated by a module spectrum satisfy
stability and we suggest a notion of \emph{really smooth} maps of
commutative ring spectra. These are maps $R \ra A$ that can be factored
as the canonical inclusion of $R$ into a free commutative $R$-algebra
spectrum followed by  a map that satisfies \'etale descent, so these
maps model the local behaviour of smooth maps in the context of
algebra, compare \cite[Proposition E.2 (d)]{loday}. We show 
that really smooth maps satisfy stability. Other examples where
stability holds are Thom spectra as well as $S \ra KU$ and other spectra of the
form $S \ra R_h = (\Sigma^\infty_+ W_h)[x^{-1}]$ considered in \cite{chy}. Using Galois descent
we also obtain stability for $S \ra KO$. 

For calculations like that of torus homology, one often doesn't really need stability, but the
property of the suspension to decompose products is the crucial
feature that one wants to have on the level of $\cL^R_{(-)}(A;
C)$. Therefore we say that $R \ra A \ra C$ decomposes products if 
\[ \cL^R_{X \times Y}(A; C) \simeq \cL_{X \vee Y \vee X \wedge Y}^R(A;
  C)\]
for all pointed simplicial sets $X$ and $Y$. 
We use Greenlees' spectral sequence \cite[Lemma 3.1]{greenlees} in the
case $C = Hk$ for $k$ a field to show that 
this decomposition property is preserved under forming suitable retracts. 

In Section \ref{sec:rational} we close with some observations on stability in characteristic zero, using
that rationally the suspension of  pointed simply connected simplicial sets splits into a pointed
sum of rational spheres and using \cite[Proposition 4.2]{bry} where Berest, Ramadoss and Yeung describe
the behaviour of representation homology and higher order Hochschild homology under rational
equivalences. 

\subsection*{Acknowledgement} 
We thank Bj{\o}rn Dundas for many helpful discussion and for spotting several dumb mistakes in earlier
versions of 
this paper. The second named author thanks the Isaac Newton Institute for Mathematical Sciences for support and 
hospitality during the programme \emph{K-theory, algebraic cycles and motivic homotopy theory} when work on
this paper was undertaken. This work  was supported by Simons Collaboration Grant 359565 for the first author and EPSRC grant number EP/R014604/1 for the second.

\subsection{Definition of $\cL^R_X(A; C)$} \label{defofcl}

We denote the category of simplicial sets by $\ssets$ and the one of pointed simplicial sets
by $\ssets_*$. 
Let $X$ be a finite pointed simplicial set and let $R \ra A \ra C$ be a sequence
of maps of commutative ring spectra. We assume that $R$ is a cofibrant
commutative $S$-algebra and that $A$ and $C$ are cofibrant commutative
$R$-algebras. The cofibrancy assumptions on $R$, $A$ and $C$ will
ensure that the homotopy type of $\cL_X^R(A; C)$ is well-defined: 

The \emph{Loday construction with respect to $X$ of
$A$ over $R$ with coefficients in $C$} is the simplicial commutative
augmented $C$-algebra spectrum $\cL^R_X(A; C)$ whose $p$-simplices are 
\[ C \wedge \bigwedge_{x \in X_p \setminus *} A \]
where the smash products are taken over $R$.  
Here, $*$ denotes the basepoint of $X$ and we place a copy of $C$ at
the basepoint. As the smash product over $R$ is the coproduct in the
category of commutative $R$-algebra spectra, the simplicial structure
is straightforward: Face maps $d_i$ on $X$ induce multiplication in
$A$ or the $A$-action on $C$ if the basepoint is 
involved. The degeneracy maps $s_i$ on $X$ cause the insertion of
the unit map $\eta_A \colon R \ra A$ over all $n$-simplices which
are not hit by $s_i\colon X_{n-1} \to X_n$. As defined,
$\cL^R_X(A; C)$ is a simplicial commutative augmented 
$C$-algebra spectrum.  We use the same symbol $\cL^R_X(A;
C)$ for its geometric realization. For $C=A$ we abbreviate 
$\cL^R_X(A; A)$ by $\cL^R_X(A)$. 

For $X = S^n$ we write $\THH^{[n], R}(A; C)$ for $\cL^R_{S^n}(A; C)$  and
if $R=S$, then we omit it from the notation, so $\THH^{[n]}(A; C) = \cL^S_{S^n}(A; C)$. For $n=1$
this is the classical case of topological Hochschild homology of $A$ with coefficients in $C$,
$\THH(A; C)$. 

Note that $\cL^R_X(A)$ is by definition \cite[VII, \S 2, \S3]{ekmm} equal to
$X \otimes A$ where $X \otimes A$ is formed in the category of commutative
$R$-algebras.

If $X \in \ssets_*$ is an arbitrary object, then we can write it as 
the colimit of its finite pointed subcomplexes and the Loday
construction with respect to $X$ can then also be expressed as the
colimit of the Loday construction for the finite pointed subcomplexes.

\section{Notions of stability}
The weakest notion of stability just asks for an abstract equivalence
in the stable homotopy category: 
\begin{definition}
  \begin{enumerate}
  \item[]
    \item
  Let $R \ra A$ be a cofibration of commutative $S$-algebras with $R$
  cofibrant. We call $R \ra A$ 
  \emph{stable} if for every pair of pointed simplicial sets $X$ and $Y$
  an equivalence $\Sigma X \simeq \Sigma Y$ implies that $\cL_X^R(A)
  \simeq \cL_Y^R(A)$. 
\item
  Let $S \ra R \ra A \ra B$ be a sequence of cofibrations of
  commutative $S$-algebras. Then we call 
  $(R, A, B)$ \emph{stable}, if for every pair of pointed simplicial
  sets $X$ and $Y$ 
  an equivalence $\Sigma X \simeq \Sigma Y$ in $\ssets_*$ implies that $\cL_X^R(A;
  B) \simeq \cL_Y^R(A; B)$. 
  \end{enumerate}
\end{definition}

\begin{examples}
\begin{itemize}
\item[]
\item  
  Dundas and Tenti show that for any discrete smooth $k$-algebra $A$
  we have that $Hk \ra HA$ is stable \cite[Example 2.6]{dt}.
\item
  They show, however, that $H\Q \ra H\Q[t]/t^2$ and $(H\Q, H\Q[t]/t^2, H\Q)$
  are not stable.
\item
  If $A$ is a commutative Hopf algebra over a field $k$, then Berest,
  Ramadoss and Yeung prove \cite[\S 5]{bry} that 
  $Hk \ra HA$ and $(Hk, HA, Hk)$ are stable by comparing higher order Hochschild homology to
  representation homology. For a purely homotopy-theoretic proof see \cite[Theorem 3.8]{hklrz}.
\item
  In \cite{bhlprz} we show that for any sequence of cofibrations of
  commutative $S$-algebras $S \ra A \ra B \ra A$ we get that
  \[ \cL_X^B(A) \simeq \cL_{\Sigma X}^A(B; A) \]
  as augmented commutative $A$-algebras and hence $B \ra A$ is stable
  if $B$ is a cofibrant commutative augmented $A$-algebra.   
 \end{itemize} 
\end{examples}
  
In the above definition we just require an abstract weak equivalence,
but one can also pose additional conditions on the equivalence
  $\cL_X^R(A; B) \simeq \cL_Y^R(A; B)$. A strong version of
  stability is the following:  
\begin{definition}
  \begin{enumerate}
  \item[]
    \item
  Let $R \ra A$ be a cofibration of commutative $S$-algebras with $R$
  cofibrant. We call $R \ra A$ 
  \emph{multiplicatively stable} if for every pair of pointed
  simplicial sets $X$ and $Y$ 
  an equivalence $\Sigma X \simeq \Sigma Y$ in $\ssets_*$  implies that $\cL_X^R(A)
  \simeq \cL_Y^R(A)$ as commutative augmented $A$-algebra spectra.
\item
  Let $\xymatrix@1{S \ar[r] & R \ar[r]^\alpha & A \ar[r]^\beta & B}$
  be a sequence of cofibrations of commutative $S$-algebras. Then we call
  $R \ra A \ra B$ \emph{multiplicatively stable} if for every pair of
  pointed simplicial sets $X$ and $Y$ an equivalence $\Sigma X \simeq
  \Sigma Y$ in $\ssets$ implies that $\cL_X^R(A; B) \simeq \cL_Y^R(A; B)$  and
  $\cL^R_X(B) \simeq \cL^R_Y(B)$ as commutative augmented $B$-algebras
  such that the diagram
  \[ \xymatrix{
\cL_X^R(A; B) \ar@{-}[rr]^\simeq \ar[d]_{\cL^R_X(\beta)}& &
\cL_Y^R(A; B)\ar[d]^{\cL^R_X(\beta)} \\ 
\cL^R_X(B) \ar@{-}[rr]^\simeq & & \cL^R_Y(B)    }\]
commutes.   
\end{enumerate}
\end{definition}
Of course, there is a whole hierarchy of notions of
stability. Instead of asking that the equivalence $\cL_X^R(A)
  \simeq \cL_Y^R(A)$ is one of augmented commutative $A$-algebras, we
  could ask for one of augmented commutative $R$-algebras or $A$- or
  just $R$-modules.
\begin{definition}
  Let $R \ra A$ be a cofibration of commutative $S$-algebras with $R$
  cofibrant. We call $R \ra A$ 
  \emph{$A$-linearly stable} if for every pair of pointed
  simplicial sets $X$ and $Y$ 
  an equivalence $\Sigma X \simeq \Sigma Y$ in $\ssets_*$ implies that $\cL_X^R(A)
  \simeq \cL_Y^R(A)$ as $A$-modules. Similarly, we call $R \ra A$ 
  \emph{$R$-linearly stable} if for every pair of pointed
  simplicial sets $X$ and $Y$ 
  an equivalence $\Sigma X \simeq \Sigma Y$ in $\ssets_*$ gives rise to an
  equivalence of $R$-modules $\cL_X^R(A) \simeq \cL_Y^R(A)$.

\end{definition}
\begin{remark}
If $R \ra A$ is $A$-linearly stable, then  $(R, A, B)$
is stable because 
\[ \cL_X^R(A; B) \simeq \cL_X^R(A) \wedge_A B. \]
If $R \ra A$ is multiplicatively stable, then so is $R \ra A \ra B$ for every cofibrant commutative
$A$-algebra $B$. 

A converse might not be true: Even if $B$ is faithful as an
$A$-module, we might not know that the equivalence $\cL_X^R(A)
\wedge_A B \simeq \cL_Y^R(A) \wedge_A B$ is of the form $f \wedge_A
B$, so we cannot deduce that $\cL_X^R(A) \simeq \cL^R_Y(A)$. 
\end{remark}

Let us start with several examples of multiplicative stability.  
\begin{proposition} \label{prop:augmented}
  If $B$ is an augmented commutative $A$-algebra, then $B \ra A$ and
  $A \ra \cL_{\Sigma X}^A(B; A) \ra A$ are multiplicatively stable. 
\end{proposition}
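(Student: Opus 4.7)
The plan is to leverage the identification recorded in the final example above, from \cite{bhlprz}: for every sequence $S \ra A \ra B \ra A$ of cofibrations of commutative ring spectra one has $\cL_X^B(A) \simeq \cL_{\Sigma X}^A(B;A)$ as augmented commutative $A$-algebras. For the first assertion, given pointed simplicial sets $X'$ and $Y'$ with $\Sigma X' \simeq \Sigma Y'$, the homotopy invariance of the Loday construction in its simplicial set variable then yields
\[
\cL_{X'}^B(A) \simeq \cL_{\Sigma X'}^A(B;A) \simeq \cL_{\Sigma Y'}^A(B;A) \simeq \cL_{Y'}^B(A)
\]
as augmented commutative $A$-algebras, proving the multiplicative stability of $B \ra A$.

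For the second assertion, set $E := \cL_{\Sigma X}^A(B;A)$ and (after cofibrant replacement if necessary) regard $A \ra E \ra A$ as a sequence of cofibrations with $\beta : E \ra A$ the augmentation. Of the three conditions defining multiplicative stability of the sequence, condition (ii) holds trivially because $\cL^A_{Z}(A) \simeq A$ for every pointed simplicial set $Z$. For condition (i) I would invoke the iteration identity
\[
\cL_{X'}^A\bigl(\cL_{Y}^A(B;A);\, A\bigr) \simeq \cL_{X' \wedge Y}^A(B;A),
\]
which formally records that $\cL^A_{(-)}(-;A)$ is the tensoring of augmented commutative $A$-algebras over pointed simplicial sets, so that iterated tensoring corresponds to the smash product of the indexing spaces. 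Taking $Y = \Sigma X$ and using $X' \wedge \Sigma X \simeq \Sigma(X' \wedge X)$ gives
\[
\cL_{X'}^A(E;A) \simeq \cL_{\Sigma(X' \wedge X)}^A(B;A).
\]
When $\Sigma X' \simeq \Sigma Y'$, smashing with the fixed pointed simplicial set $X$ and suspending preserves this equivalence, so $\Sigma(X' \wedge X) \simeq \Sigma(Y' \wedge X)$, and condition (i) follows. Condition (iii), compatibility with $\beta$, then follows from naturality: all equivalences above are maps of augmented commutative $A$-algebras, and the iteration identity is natural in its augmented-algebra entry, hence in $\beta$.

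The main technical hurdle is the iteration identity used in condition (i); justifying it rigorously in the chosen model of commutative ring spectra, together with the naturality needed for (iii), is where the real work lies. Once that formality is in place, the rest of the argument is essentially a substitution of variables in the identification from \cite{bhlprz}.
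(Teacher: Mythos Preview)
Your proposal is correct and follows essentially the same approach as the paper: both arguments use the identification $\cL_X^B(A)\simeq\cL_{\Sigma X}^A(B;A)$ from \cite{bhlprz} for the first claim, and for the second claim both invoke the iteration identity $\cL_{X'}^A(\cL_{\Sigma X}^A(B;A);A)\simeq\cL_{X'\wedge\Sigma X}^A(B;A)\simeq\cL_{\Sigma(X'\wedge X)}^A(B;A)$ together with the trivial observation $\cL_Z^A(A)\simeq A$. Your treatment is slightly more explicit about the compatibility condition (iii), which the paper leaves implicit, and you rightly flag the iteration identity as the one step requiring care.
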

\begin{proof}  
In the augmented case $A \ra B \ra A$, as an equivalence $\Sigma X \simeq \Sigma Y$ in $\ssets_*$ implies
that $\cL_{\Sigma X}^A(B;A) \simeq \cL_{\Sigma Y}^A(B;A)$ as augmented commutative $A$-algebras, we
also
get that $\cL_X^B(A) \simeq \cL_Y^B(A)$ as augmented commutative $A$-algebras by applying
\cite[Theorem 3.3]{bhlprz} to the sequence of maps $A = A \ra B \ra A$, so $B \ra A$ is
multiplicatively stable. 

For the second claim observe that 
\[ \cL_Y^A(\cL_{\Sigma X}^A(B; A); A) \simeq \cL_{Y \wedge \Sigma X}^A(B; A) =
  \cL_{\Sigma Y \wedge X}^A(B; A). 
\]
Observe that for all $X$ we have that $\cL^A_X(A) \simeq A$
so $A \ra \cL_{\Sigma X}^A(B; A) \ra A$ is multiplicatively stable. 
\end{proof}

Loday constructions for suspensions are stable:   
\begin{theorem} \label{thm:lxstable}
  Let $R \ra A$ be a cofibration of commutative $S$-algebras with $R$
  cofibrant. Then $A \ra \cL_{\Sigma X}^R(A)$ is multiplicatively
  stable for all $X$.  
\end{theorem}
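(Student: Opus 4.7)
The plan is to convert the iterated Loday construction $\cL_Y^A(\cL_{\Sigma X}^R(A))$ into a single Loday construction over $R$ indexed by a pointed simplicial set built out of $Y$ and $\Sigma X$, and then to invoke the invariance of the Loday construction under equivalences of pointed simplicial sets.

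The key technical step is a Fubini-type identity: for pointed simplicial sets $X$ and $Y$ and a cofibration $R \to A$ of commutative $S$-algebras, there is a natural equivalence
\[ \cL_Y^A(\cL_X^R(A); A) \simeq \cL_{X \wedge Y}^R(A) \]
of augmented commutative $A$-algebras. One verifies this level-wise: at simplicial degree $p$ both sides reduce to $\bigwedge_R^{(|X_p|-1)(|Y_p|-1)+1} A$, because the basepoint copy of $A$ in $\cL_X^R(A)_p \cong \bigwedge_R^{|X_p|} A$ provides the $A$-algebra structure and so is absorbed under the outer $\wedge_A$. Combining this with the standard base-change identity $\cL_Y^A(B) \simeq \cL_Y^A(B;A) \wedge_A B$ for an augmented commutative $A$-algebra $B$, applied with $B = \cL_{\Sigma X}^R(A)$ augmented via $\Sigma X \to \ast$, one obtains
\[ \cL_Y^A(\cL_{\Sigma X}^R(A)) \simeq \cL_{\Sigma X \wedge Y}^R(A) \wedge_A \cL_{\Sigma X}^R(A) \]
as augmented commutative $\cL_{\Sigma X}^R(A)$-algebras, with algebra structure from the second smash factor and augmentation induced by $\cL_{\Sigma X \wedge Y}^R(A) \to A$.

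To finish, suppose $\Sigma Y \simeq \Sigma Z$ in $\ssets_*$. Using $\Sigma X \wedge Y \cong X \wedge \Sigma Y$ and the analogous identity for $Z$, this equivalence upgrades to $\Sigma X \wedge Y \simeq \Sigma X \wedge Z$ in $\ssets_*$. Invariance of the Loday construction under pointed equivalences gives $\cL_{\Sigma X \wedge Y}^R(A) \simeq \cL_{\Sigma X \wedge Z}^R(A)$ as augmented commutative $A$-algebras, and smashing with $\cL_{\Sigma X}^R(A)$ over $A$ promotes this to an equivalence of augmented commutative $\cL_{\Sigma X}^R(A)$-algebras. Via the Fubini identification, this reads $\cL_Y^A(\cL_{\Sigma X}^R(A)) \simeq \cL_Z^A(\cL_{\Sigma X}^R(A))$, yielding multiplicative stability of $A \to \cL_{\Sigma X}^R(A)$.

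The main obstacle is establishing the Fubini identity $\cL_Y^A(\cL_X^R(A);A) \simeq \cL_{X\wedge Y}^R(A)$ cleanly: while the level-wise identification is immediate from a count, verifying compatibility with simplicial face and degeneracy maps and tracking the $A$-algebra structure and the augmentations through two different base rings $R$ and $A$ requires careful bookkeeping.
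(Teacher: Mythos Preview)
Your Fubini identity $\cL_Y^A(\cL_X^R(A);A)\simeq\cL_{X\wedge Y}^R(A)$ is correct, but the ``standard base-change identity'' $\cL_Y^A(B)\simeq\cL_Y^A(B;A)\wedge_A B$ is false in general, so the proof breaks at that step. The correct relationship goes the other way: $\cL_Y^A(B;A)\simeq\cL_Y^A(B)\wedge_B A$ via the augmentation, and this cannot be undone by smashing back with $B$ over $A$. For a concrete counterexample take $A=Hk$ for a field $k$ of characteristic $\neq 2$, $B=Hk[t]/t^2$, and $Y=S^1$: one has $\dim_k\pi_1\THH^k(k[t]/t^2)=1$, whereas $\pi_1\bigl(\THH^k(k[t]/t^2;k)\wedge_{Hk} Hk[t]/t^2\bigr)\cong \HH_1^k(k[t]/t^2;k)\otimes_k k[t]/t^2$ has dimension $2$. (Your level-wise count of copies of $B$ matches, but the face maps that carry a non-basepoint into the basepoint use the augmentation $\epsilon\colon B\to A$ on the left side and the multiplication $B\wedge_A B\to B$ on the right; these do not agree.)

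Curiously, your final formula $\cL_Y^A(\cL_{\Sigma X}^R(A))\simeq\cL_{\Sigma X\wedge Y}^R(A)\wedge_A\cL_{\Sigma X}^R(A)$ is correct, but for a different reason. The paper reaches it via the identity $\cL_Y^A(C)\simeq\cL_Y^R(C)\wedge_{\cL_Y^R(A)}A$ (from \cite{hhlrz}) together with $\cL_Y^R(\cL_{\Sigma X}^R(A))\simeq\cL_{Y\times\Sigma X}^R(A)$; collapsing the copy of $Y$ then gives $\cL_{Y_+\wedge\Sigma X}^R(A)$. Since $\Sigma(Y_+)\simeq\Sigma Y\vee S^1$, one has $Y_+\wedge\Sigma X\simeq(\Sigma X\wedge Y)\vee\Sigma X$, which recovers your formula. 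Equivalently, if you carry out your level-wise Fubini count \emph{without} passing to coefficients in $A$, you obtain $\cL_Y^A(\cL_X^R(A))\simeq\cL_{Y_+\wedge X}^R(A)$ directly; this is the right replacement for your two-step argument.
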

\begin{proof}
  We have to show that $\cL_Y^A(\cL^R_{\Sigma X}(A))$ only depends on
  the homotopy type of $\Sigma Y$. We first
  identify $\cL_Y^A(\cL^R_{\Sigma X}(A))$ with the help of
  \cite[Remark 3.3]{hhlrz} as  
  \begin{align*}
    \cL_Y^A(\cL^R_{\Sigma X}(A)) \simeq & \cL^R_Y(\cL^R_{\Sigma X}(A))
                                          \wedge_{\cL^R_Y(A)} A \\ 
    \simeq & \cL^R_{Y \times \Sigma X}(A) \wedge_{\cL^R_Y(A)} A \\
    \simeq & \cL^R_{(Y \times \Sigma X) \cup_{Y} *}(A) \\
    \simeq & \cL^R_{Y_+ \wedge \Sigma X}(A) \cong \cL^R_{\Sigma(Y_+) \wedge X}(A).
  \end{align*}
As $\Sigma(Y_+) \simeq \Sigma Y \vee S^1$ for $Y \in \ssets_*$, this depends only on $\Sigma Y$. 
    \end{proof}
\begin{example} \label{ex:thh2fp}  
  Applying Theorem \ref{thm:lxstable} to $H\F_p$ and $\Sigma X = S^2$
  gives that the map
  \[ H\F_p \ra \THH^{[2]}(H\F_p) \simeq H\F_p \vee \Sigma^3 H\F_p\]
  is multiplicatively stable for all primes $p$. 
\end{example}

As we know from the algebraic setting that smooth algebras are stable, it is natural to consider
free commutative $A$-algebra spectra. 
Let $M$ be an $A$-module spectrum for some commutative $S$-algebra $A$. We consider the free
commutative $A$-algebra on $M$,
\[ \mathbb{P}_A(M) = \bigvee_{n \geq 0} M^{\wedge_A n}/\Sigma_n\]
with the usual convention that $M^{\wedge_A 0}/\Sigma_0 = A$.

In the following we use several categories, so let's fix some notation. Let $\mathcal{U}$ denote
the category of unbased (compactly generated weak Hausdorff) spaces. For a commutative ring
spectrum $R$, $\mathcal{M}_R$ denotes the category of $R$-module spectra and $\mathcal{C}_R$
denotes the category of commutative $R$-algebras.

\begin{lemma} 
  For every simplicial set $X$ there is a weak equivalence of commutative $A$-algebras 
  \[ \cL^A_X(\mathbb{P}_A(M)) \simeq \mathbb{P}_A(X_+ \wedge M).\]  
\end{lemma}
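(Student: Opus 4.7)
The plan is to identify both sides as the value of the left adjoint $\mathbb{P}_A$ applied to the correct tensor functor. Recall that $\mathbb{P}_A \colon \cM_A \to \cC_A$ is left adjoint to the forgetful functor $U \colon \cC_A \to \cM_A$, and that by definition the Loday construction $\cL^A_X(B)$ is the tensor $X \otimes B$ of $B$ with $X$ in the category $\cC_A$ (whose right adjoint is the cotensor $B^X$). In $\cM_A$ the corresponding tensor of an $A$-module $M$ with an unpointed simplicial set $X$ is $X_+ \wedge M$.

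Given these data the point-set isomorphism is a one-line adjunction chase. For every $B \in \cC_A$,
\begin{align*}
\cC_A\bigl(\cL^A_X(\mathbb{P}_A(M)),\,B\bigr)
  &\cong \ssets\bigl(X,\,\cC_A(\mathbb{P}_A(M),B)\bigr)\\
  &\cong \ssets\bigl(X,\,\cM_A(M,UB)\bigr)\\
  &\cong \cM_A(X_+\wedge M,\,UB)\\
  &\cong \cC_A\bigl(\mathbb{P}_A(X_+\wedge M),\,B\bigr),
\end{align*}
where the outer isomorphisms are the tensor–cotensor adjunctions in $\cC_A$ and $\cM_A$ respectively, and the middle two are the free–forgetful adjunction $\mathbb{P}_A\dashv U$. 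Yoneda produces a natural isomorphism of commutative $A$-algebras. Equivalently, and perhaps more concretely, one can verify the claim level-wise: since $\mathbb{P}_A$ sends coproducts in $\cM_A$ (wedges) to coproducts in $\cC_A$ (smash products over $A$),
\[
\cL^A_X(\mathbb{P}_A(M))_p = \bigwedge_{x\in X_p}^A \mathbb{P}_A(M) \cong \mathbb{P}_A\Bigl(\bigvee_{x\in X_p} M\Bigr) = \mathbb{P}_A\bigl((X_+\wedge M)_p\bigr).
\]
Under this identification a face map on the left multiplies two copies of $\mathbb{P}_A(M)$ via $\mathbb{P}_A$ applied to the fold $M\vee M\to M$, and a degeneracy inserts the unit via $\mathbb{P}_A$ applied to $0\to M$; these agree with the face and degeneracy operators of the simplicial $A$-module $X_+\wedge M$ inside $\mathbb{P}_A$. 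Geometric realization then commutes with $\mathbb{P}_A$ since the latter is a left adjoint.

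The genuine work, and the main obstacle, is promoting this point-set isomorphism to a weak equivalence of commutative $A$-algebras. For this one replaces $M$ by a cofibrant $A$-module $M^c\xrightarrow{\sim} M$; then $\mathbb{P}_A(M^c)$ is a cofibrant commutative $A$-algebra and the Loday construction has a well-defined homotopy type, while $X_+\wedge M^c$ computes the derived tensor in $\cM_A$. The adjunction argument above applied to $M^c$ yields the desired equivalence, and both sides are homotopy invariant in $M$ so the result passes to arbitrary $A$-module spectra.
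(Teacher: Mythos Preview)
Your proof is correct and follows essentially the same route as the paper: the paper's argument is exactly your adjunction chase (tensor--cotensor adjunction in $\cC_A$, then $\mathbb{P}_A\dashv U$, then tensor--cotensor in $\cM_A$, then $\mathbb{P}_A\dashv U$ again), followed by Yoneda. Your additional level-wise verification and your remarks on cofibrant replacement to pass from a point-set isomorphism to a homotopically meaningful statement are not in the paper's proof but are welcome elaborations.
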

\begin{proof}
  For the proof we use the fact that the category of commutative $A$-algebras is tensored over
  unpointed
  topological spaces and simplicial sets in a compatible way \cite[VII \S 2, \S 3]{ekmm}. Note that
  $\cL^A_X(\mathbb{P}_A(M)) = X \otimes_A \mathbb{P}_A(M)$ in the notation of \cite{ekmm}.

  We have the following chain of bijections for an arbitrary commutative $A$-algebra $B$: 
  \begin{align*}
    \mathcal{C}_A(X \otimes_A \mathbb{P}_A(M), B) & \cong
 \mathcal{U}(X, \mathcal{C}_A(\mathbb{P}_A(M), B)) \\ 
  & \cong \mathcal{U}(X, \mathcal{M}_A(M, B)) \\ 
  & \cong \mathcal{M}_A(X_+ \wedge M, B) \\ 
    & \cong \mathcal{C}_A(\mathbb{P}_A(X_+ \wedge M), B)
  \end{align*}
  where $X_+ \wedge M$ is the tensor of $X$ with $M$ in the category
  of $A$-modules. Hence the Yoneda lemma implies the claim. 
\end{proof}

\begin{corollary} \label{cor:pstable}
In the setting above, if $\Sigma X \simeq \Sigma Y$, then
$\cL_X^A(\mathbb{P}_A(M)) \simeq \cL_Y^A(\mathbb{P}_A(M))$ as 
commutative $A$-algebras. 
\end{corollary}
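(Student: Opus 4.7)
The plan is to combine the preceding lemma with the observation that the free commutative $A$-algebra functor $\mathbb{P}_A$ turns stable equivalences of $A$-modules into weak equivalences of commutative $A$-algebras. Concretely, applying the lemma to both $X$ and $Y$ reduces the claim to showing that the $A$-modules $X_+ \wedge M$ and $Y_+ \wedge M$ are weakly equivalent whenever $\Sigma X \simeq \Sigma Y$ in $\ssets_*$.

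For the module-level comparison, I would argue as follows. Since $X$ is pointed, adding a disjoint basepoint splits off a copy of $S^0$, i.e. $X_+ \simeq X \vee S^0$ in $\ssets_*$, so that
\[ \Sigma(X_+) \simeq \Sigma X \vee S^1.\]
The same identity holds for $Y$, and from $\Sigma X \simeq \Sigma Y$ one deduces $\Sigma(X_+) \simeq \Sigma(Y_+)$. Smashing with the (cofibrant) $A$-module $M$ in the stable category of $A$-modules then gives an equivalence
\[ \Sigma(X_+ \wedge M) \simeq \Sigma(Y_+ \wedge M).\]
Because suspension is invertible in the homotopy category of $A$-modules, this desuspends to a weak equivalence $X_+ \wedge M \simeq Y_+ \wedge M$ in $\mathcal{M}_A$.

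To finish, I would use that $\mathbb{P}_A$, applied to cofibrant $A$-modules, preserves weak equivalences and lands in commutative $A$-algebras (this is the whole point of the Quillen adjunction $\mathbb{P}_A \dashv U$ between $\mathcal{M}_A$ and $\mathcal{C}_A$ underlying the Yoneda argument in the lemma). After replacing $X_+ \wedge M$ and $Y_+ \wedge M$ by cofibrant $A$-modules if necessary, we therefore get an equivalence $\mathbb{P}_A(X_+ \wedge M) \simeq \mathbb{P}_A(Y_+ \wedge M)$ of commutative $A$-algebras. Chaining this with the two instances of the lemma yields $\cL_X^A(\mathbb{P}_A(M)) \simeq \cL_Y^A(\mathbb{P}_A(M))$ as commutative $A$-algebras.

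The only subtle point, and the place where one must be a bit careful, is the desuspension step together with the passage from a stable equivalence to a genuine weak equivalence of cofibrant $A$-modules, since $\mathbb{P}_A$ is only guaranteed to preserve equivalences between cofibrant inputs. Once this is set up, everything else is formal from the lemma and the standard tensor–hom adjunctions used in its proof.
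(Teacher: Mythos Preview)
Your argument is correct and follows essentially the same route as the paper: both reduce, via the preceding lemma, to showing that $X_+ \wedge M \simeq Y_+ \wedge M$ as $A$-modules, and both obtain this by passing to the stable world where suspension is invertible (the paper phrases this as $\Sigma X \simeq \Sigma Y \Rightarrow \Sigma^\infty_+ X \simeq \Sigma^\infty_+ Y$, which is exactly your splitting $\Sigma(X_+) \simeq \Sigma X \vee S^1$ read at the level of suspension spectra). Your additional care about cofibrancy when applying $\mathbb{P}_A$ is appropriate but not something the paper makes explicit.
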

\begin{proof} 
If $\Sigma X \simeq \Sigma Y$, then $\Sigma^\infty_+ X \simeq
\Sigma^\infty_+ Y$ and as $X_+ \wedge M =
  \Sigma^\infty_+ X \wedge M$ this implies that  $\mathbb{P}_A(X_+
  \wedge M) \simeq \mathbb{P}_A(Y_+ \wedge M)$ as 
  commutative $A$-algebras. 
\end{proof}

The following example was also considered in \cite[Lemma 5.5]{mccm}. A
cofibration $A \ra B$ of commutative $S$-algebras with $A$ cofibrant
is called $\THH$-\'etale if the canonical map $B \ra
\THH^A(B)$ is a weak equivalence.

\begin{proposition}
If $A \ra B$ is $\THH$-\'etale, then for all connected pointed $X$ the
canonical map $B \ra \cL_X^A(B)$ is an equivalence. Hence, as this map is a
map of augmented commutative $B$-algebras, $\cL_X^A(B) \simeq
\cL_Y^A(B)$ for any pair of connected simplicial sets $X$ and $Y$. 
\end{proposition}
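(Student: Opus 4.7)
The plan is to build a connected pointed simplicial set $X$ cell-by-cell from the basepoint, using that the Loday construction sends pointed pushouts of simplicial sets to homotopy pushouts (i.e., relative smash products) of commutative $B$-algebras. The $\THH$-\'etale hypothesis provides the seed for the induction.

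First, I would prove by induction on $n \geq 1$ that $\cL^A_{S^n}(B) \simeq B$. The base case $n=1$ is exactly the $\THH$-\'etale hypothesis. For the inductive step, realize $S^{n+1} \simeq \Sigma S^n$ as the pointed pushout $CS^n \cup_{S^n} CS^n$ of two contractible cones glued along $S^n$. Applying the pushout formula for $\cL$ and using contractibility of $CS^n$ together with the inductive hypothesis gives
\[ \cL^A_{S^{n+1}}(B) \simeq \cL^A_{CS^n}(B) \wedge_{\cL^A_{S^n}(B)} \cL^A_{CS^n}(B) \simeq B \wedge_B B \simeq B. \]

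Next I would assume $X$ is reduced (its unique $0$-simplex is the basepoint, which one may do since $X$ is connected) and write $X = \colim_k X^{(k)}$ via its skeletal filtration. Reducedness forces the $1$-skeleton to be a wedge of circles, $X^{(1)} = \bigvee_\alpha S^1$. Iterating the wedge formula $\cL^A_{Y \vee Z}(B) \simeq \cL^A_Y(B) \wedge_B \cL^A_Z(B)$ (the special case of the pushout formula over a point) together with the first step yields $\cL^A_{X^{(1)}}(B) \simeq B$. For the inductive step, $X^{(k+1)}$ is the pointed pushout $X^{(k)} \cup_{\bigvee_\beta S^k} \bigvee_\beta CS^k$ expressing the attachment of $(k+1)$-cells. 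Using $\cL^A_{\bigvee_\beta S^k}(B) \simeq B$ (wedge formula and the first step) plus contractibility of the cones, the pushout formula produces
\[ \cL^A_{X^{(k+1)}}(B) \simeq \cL^A_{X^{(k)}}(B) \wedge_{B} B \simeq B. \]
Passing to the colimit (which commutes with $\cL$ by the description in Section \ref{defofcl}) gives $\cL^A_X(B) \simeq B$. Each equivalence in the induction is compatible with the unit maps from $B$, so the canonical map $B \to \cL^A_X(B)$ is the one being identified, and it is a weak equivalence. The final assertion $\cL^A_X(B) \simeq \cL^A_Y(B)$ as augmented commutative $B$-algebras for any two connected $X, Y$ is then immediate.

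The main obstacle is to justify carefully the pushout formula for $\cL$: one must ensure that the pushouts of pointed simplicial sets involved in cell attachments are sent to \emph{homotopy} pushouts in commutative $B$-algebras, which requires cofibrancy of the relevant maps. The cofibrancy assumptions built into the definition of $\cL^R_X(A;C)$ in Section \ref{defofcl}, together with a choice of CW representative of $X$ so that each inclusion $\bigvee S^k \hookrightarrow \bigvee CS^k$ is a cofibration, make this standard; the colimit step over skeleta is likewise unproblematic by the definition of $\cL$ on infinite pointed simplicial sets.
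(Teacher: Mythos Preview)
Your proposal is correct and takes essentially the same cell-attachment induction as the paper: both reduce to the wedge-of-circles case for the $1$-skeleton and then attach higher cells using the pushout formula, knowing the result for the (contractible) cells and their spherical boundaries. The only organizational difference is that you isolate the sphere case as a separate preliminary induction and then attach all $(k{+}1)$-cells at once via a wedge, whereas the paper runs a single induction on the top simplicial dimension (with a secondary induction on the number of top cells, invoking subdivision as in \cite[Proposition~8.4]{bhlprz} to ensure embedded boundaries) so that the result for $\partial\Delta^{k+1}\simeq S^k$ falls out of the dimension hypothesis rather than a separate lemma.
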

\begin{proof}
  The proof is by induction on the top dimension of a non-degenerate simplex in a finite connected
  simplicial set, and then by taking colimits in the infinite case.  A connected
  $0$-dimensional simplicial set consists of a point, where there is nothing to prove.
  Any $1$-dimensional connected finite simplicial set is homotopy equivalent to a wedge of
  circles, so if $X\simeq S^1\vee S^1\vee\ldots\vee S^1$ and $B\simeq \cL_{S^1}^A B$,  
\[\cL_{X}^A B\simeq B\wedge_B B \wedge_B\cdots \wedge_B B\simeq B. \]
Once we know the result for simplicial sets of dimension $\leq n-1$, if we get a simplicial set
$X$ with a finite number of non-degenerate $n$-cells we proceed by induction on the number of
those.   As in the proof of Proposition 8-4 in \cite{bhlprz}, using the homotopy invariance of
the construction and subdivision, if needed, we can assume that $X$ can be constructed by adding
a new non-degenerate simplex with an embedded boundary to a simplicial set homotopy equivalent to
$X$ with one non-degenerate $n$-cell deleted, for which the proposition holds by the induction on
the number of non-degenerate $n$-cells.  By the inductive hypothesis it also holds for the
embedded boundary $\partial\Delta^n$, and since the new simplex being added is homotopy equivalent
to a point, the proposition holds for it.  By the connectivity and by homotopy invariance we can
also assume that the basepoint of $X$ is contained in  the boundary of the new simplex being
attached, so the identifications of all three Loday constructions with $B$ are compatible. Then
$\cL_X^A(B)\simeq B\wedge_B B\simeq B$.
\end{proof}
\begin{remark}
  Examples of $\THH$-\'etale maps $A \ra B$ are Galois extensions in
  the sense of \cite{rognes} but also \'etale maps in the sense of
  Lurie \cite[Definition 7.5.1.4]{lurie}. For a careful discussion of
  these notions and for comparison results see \cite{mathew}.
\end{remark}

\section{Inheritance properties and descent}

With the assumption of multiplicative stability we get a descent result: 
\begin{theorem}
  If $R \ra A \ra B$ is multiplicatively stable, then $A \ra B$
  is multiplicatively stable. 
\end{theorem}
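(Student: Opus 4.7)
My plan is to express $\cL_X^A(B)$ via a homotopy pushout that involves precisely the objects $\cL_X^R(A;B)$ and $\cL_X^R(B)$ controlled by the multiplicative stability hypothesis, so that the hypothesis translates directly into a statement about $\cL_X^A(B)$.

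The first step is to establish the base-change formula
\[ \cL_X^A(B) \simeq \cL_X^R(B) \wedge_{\cL_X^R(A; B)} B \]
as augmented commutative $B$-algebras. This is a reformulation of the identification
\[ \cL_X^A(B) \simeq \cL_X^R(B) \wedge_{\cL_X^R(A)} A \]
from \cite[Remark 3.3]{hhlrz} that was already exploited in the proof of Theorem~\ref{thm:lxstable}. Using $\cL_X^R(A;B) \simeq \cL_X^R(A) \wedge_A B$ and associativity of the relative smash product, the pushout on the right can be computed in two stages by first base-changing along $\cL_X^R(A) \to \cL_X^R(A;B)$; the intermediate pushout $\cL_X^R(A;B) \wedge_{\cL_X^R(A)} A$ collapses to $B$ via the augmentation $\cL_X^R(A;B) \to B$, which yields the displayed formula.

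Once this is in place the rest is formal. The multiplicative stability of $R \to A \to B$ supplies, for every equivalence $\Sigma X \simeq \Sigma Y$, equivalences $\cL_X^R(A;B) \simeq \cL_Y^R(A;B)$ and $\cL_X^R(B) \simeq \cL_Y^R(B)$ of augmented commutative $B$-algebras, and the commuting square built into the definition says precisely that these equivalences intertwine $\cL_X^R(\beta)$ with $\cL_Y^R(\beta)$. Since homotopy pushouts of commutative ring spectra are functorial in the input data, these compatibilities induce an equivalence
\[ \cL_X^R(B) \wedge_{\cL_X^R(A;B)} B \simeq \cL_Y^R(B) \wedge_{\cL_Y^R(A;B)} B \]
of augmented commutative $B$-algebras. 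Combining with the base-change formula applied to both $X$ and $Y$ produces $\cL_X^A(B) \simeq \cL_Y^A(B)$ as augmented commutative $B$-algebras, which is the multiplicative stability of $A \to B$.

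The main care required is in the first step: verifying that the base-change formula is an equivalence of \emph{augmented commutative $B$-algebras}, not just of underlying spectra or $B$-modules. This amounts to tracking cofibrancy and ensuring that the relative smash product computes the homotopy pushout in the model category of commutative $B$-algebras. Under the standing cofibrancy hypotheses on $R$, $A$, and $B$ this should be routine, but the bookkeeping with the augmentations to $B$ is what demands the most attention.
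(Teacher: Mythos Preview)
Your proposal is correct and follows essentially the same route as the paper: both arguments hinge on the pushout formula $\cL_X^A(B) \simeq B \wedge_{\cL_X^R(A;B)} \cL_X^R(B)$ and then invoke the compatibility square in the definition of multiplicative stability to transport the equivalence from $X$ to $Y$. The only cosmetic difference is that the paper cites this formula directly as the Juggling Lemma \cite[Lemma~3.1]{bhlprz}, whereas you rederive it from \cite[Remark~3.3]{hhlrz} by a short associativity manipulation; the content is the same.
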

\begin{proof}
  Let's assume that $\Sigma X \simeq \Sigma Y$ in $\ssets_*$. Then by assumption we
 get that 
  $\cL_X^R(B) \simeq \cL_Y^R(B)$ and $\cL^R_X(A;B) \simeq \cL^R_Y(A;
  B)$ as commutative augmented $B$-algebras, compatibly with the module structure of the former
  over the latter. The Juggling Lemma
  \cite[Lemma 3.1]{bhlprz} yields an equivalence of augmented
  commutative $B$-algebras 
  \[  \cL_X^A(B) \simeq B \wedge_{\cL_X^R(A;B)} \cL^R_X(B) \text{ and }
      \cL_Y^A(B) \simeq B \wedge_{\cL_Y^R(A;B)} \cL^R_Y(B). \]
    Our assumptions guarantee that therefore $\cL_X^A(B) \simeq
    \cL_Y^A(B)$ as commutative augmented $B$-algebras. 
\end{proof}
One can upgrade this slightly and introduce coefficients:
\begin{corollary}
  If $S\ra R\ra A\ra B\ra C$ is a sequence of cofibrations of commutative $A$-algebras and both $R \ra A \ra C$ and $R \ra B \ra C$ are multiplicatively stable,
  then $A \ra B \ra C$ is multiplicatively stable as well. 

\end{corollary}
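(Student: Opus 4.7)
The plan is to imitate the proof of the preceding theorem, now with the coefficient algebra $C$ carried along. The central input is again the Juggling Lemma \cite[Lemma 3.1]{bhlprz}, applied in its coefficient form.

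Assuming $\Sigma X \simeq \Sigma Y$ in $\ssets_*$, my first step is to apply the (coefficient-enhanced) Juggling Lemma to identify, as augmented commutative $C$-algebras,
\[ \cL_X^A(B; C) \simeq \cL^R_X(B; C) \wedge_{\cL^R_X(A; C)} C \quad \text{and} \quad \cL_X^A(C) \simeq \cL^R_X(C) \wedge_{\cL^R_X(A; C)} C, \]
together with the analogues for $Y$. Here $C$ is viewed as an $\cL^R_X(A;C)$-module via the augmentation, the $\cL^R_X(A;C)$-action on $\cL^R_X(B;C)$ (respectively $\cL^R_X(C)$) is induced by the map $A \to B$ of commutative $R$-algebras (respectively $A \to C$), and under these identifications the structure map $\cL^A_X(\beta)\colon \cL_X^A(B; C) \to \cL_X^A(C)$ corresponds to tensoring $\cL^R_X(\beta)$ with $C$ over $\cL^R_X(A;C)$.

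The second step combines the two multiplicative stability hypotheses. Multiplicative stability of $R \to A \to C$ supplies compatible equivalences $\cL^R_X(A;C) \simeq \cL^R_Y(A;C)$ and $\cL^R_X(C) \simeq \cL^R_Y(C)$, so the base change along the augmentation $\cL^R_X(A;C) \to C$ transports correctly from $X$ to $Y$. Multiplicative stability of $R \to B \to C$ yields $\cL^R_X(B;C) \simeq \cL^R_Y(B;C)$ together with compatibility under $\cL^R_X(\beta)$. Plugging these into the Juggling Lemma formulas then produces equivalences $\cL^A_X(B; C) \simeq \cL^A_Y(B; C)$ and $\cL^A_X(C) \simeq \cL^A_Y(C)$ of augmented commutative $C$-algebras, and the required commuting square involving $\cL^A_X(\beta)$ follows by naturality of the base change.

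The main obstacle will be verifying that the two given stability compatibilities -- one via $A \to C$ and one via $B \to C$, both landing in $\cL^R_X(C)$ -- jointly pin down the $\cL^R_X(A;C)$-module structure on $\cL^R_X(B;C)$, which is induced by $A \to B$ rather than by any map into $C$. To handle this I would factor $A \to C$ as $A \to B \to C$ and exploit functoriality of the Loday construction in the algebra variable to deduce the required compatibility with $\cL^R_X(A;C) \to \cL^R_X(B;C)$ from the two given ones.
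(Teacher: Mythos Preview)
Your plan—apply the coefficient form of the Juggling Lemma and then feed in the two stability hypotheses—is precisely the ``slight upgrade'' of the preceding theorem that the paper has in mind; the paper itself offers no proof for the corollary. Your identification of the formula $\cL_X^A(B;C) \simeq C \wedge_{\cL_X^R(A;C)} \cL_X^R(B;C)$ is the right one, and you are correct that the $\cL_X^R(A;C)$-module structure on $\cL_X^R(B;C)$ comes from $A \to B$.

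You are also right to flag the compatibility obstacle, but your proposed fix does not close it. Factoring $A \to C$ through $B$ only tells you that the \emph{maps} $\cL_X^R(A;C) \to \cL_X^R(B;C) \to \cL_X^R(C)$ compose correctly; it says nothing about whether the \emph{equivalences} $\phi_A \colon \cL_X^R(A;C) \simeq \cL_Y^R(A;C)$ and $\phi_B \colon \cL_X^R(B;C) \simeq \cL_Y^R(B;C)$, supplied by two independent hypotheses, intertwine the map induced by $A \to B$. The hypothesis on $R \to A \to C$ couples $\phi_A$ to \emph{some} equivalence $\psi_A$ on $\cL^R_{(-)}(C)$; the hypothesis on $R \to B \to C$ couples $\phi_B$ to a possibly different $\psi_B$. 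Even granting $\psi_A = \psi_B$, commutativity of the outer rectangle and of the right square in
\[
\xymatrix{
\cL_X^R(A;C) \ar[r] \ar[d]_{\phi_A} & \cL_X^R(B;C) \ar[r] \ar[d]_{\phi_B} & \cL_X^R(C) \ar[d] \\
\cL_Y^R(A;C) \ar[r] & \cL_Y^R(B;C) \ar[r] & \cL_Y^R(C)
}
\]
does not force the left square to commute, since the right horizontal maps are not monomorphisms. The paper's bare statement does not address this either; as written, the two separate stability hypotheses do not obviously supply the needed compatibility along $A \to B$, and a clean argument would require a strengthened assumption (for instance, a single compatible family of equivalences along the whole chain $R \to A \to B \to C$).
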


\begin{lemma} \label{lem:basechange}
Let $A \leftarrow R \ra B$ be cofibrations of commutative $S$-algebras
with $R$ cofibrant, then  
$\cL_X^A(A \wedge_R B) \cong A \wedge_R \cL_X^R(B)$ as simplicial commutative augmented
$A \wedge_R B$-algebras and
hence on realizations as commutative augmented $A \wedge_R B$-algebras.
\end{lemma}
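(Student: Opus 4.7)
The plan is to verify the identification level-wise in the simplicial direction and then check compatibility with the simplicial structure maps, the augmentation, and the $A \wedge_R B$-algebra structure.

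The key algebraic input is that the base-change functor $A \wedge_R (-)\colon \cC_R \to \cC_A$ from commutative $R$-algebras to commutative $A$-algebras is left adjoint to the forgetful functor and therefore preserves coproducts. Since coproducts in $\cC_R$ (resp.\ $\cC_A$) are given by $\wedge_R$ (resp.\ $\wedge_A$), this yields, for any finite collection of commutative $R$-algebras $B_1,\ldots,B_n$, a natural isomorphism
\[ A \wedge_R (B_1 \wedge_R \cdots \wedge_R B_n) \;\cong\; (A \wedge_R B_1) \wedge_A \cdots \wedge_A (A \wedge_R B_n) \]
of commutative $A$-algebras.

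Applying this in each simplicial degree $p$ with $n = |X_p|$ copies of $B$ (indexed by the points of $X_p$, with the copy at the basepoint playing the role of the coefficient module) produces an isomorphism
\[ \bigl(A \wedge_R \cL_X^R(B)\bigr)_p \;=\; A \wedge_R \bigwedge_{x \in X_p,\,R} B \;\cong\; \bigwedge_{x \in X_p,\,A} (A \wedge_R B) \;=\; \cL_X^A(A \wedge_R B)_p \]
of commutative $A$-algebras. The copy of $A \wedge_R B$ sitting at the basepoint on each side realises the augmentation and the $A \wedge_R B$-algebra structure, and these are clearly preserved by the identification above.

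It then remains to check that these level-wise isomorphisms assemble into a map of simplicial objects. Face maps in $\cL_X^R(B)$ are induced by the unit $R \to B$ and the multiplication $B \wedge_R B \to B$ (with the obvious modification at the basepoint), and degeneracies are induced by the unit. Under $A \wedge_R (-)$ these become, respectively, the unit $A \to A \wedge_R B$ and the multiplication $(A \wedge_R B) \wedge_A (A \wedge_R B) \to A \wedge_R B$ of $A \wedge_R B$ as a commutative $A$-algebra, which are precisely the maps used to define the simplicial structure on $\cL_X^A(A \wedge_R B)$. Naturality of the monoidal isomorphism displayed above then guarantees compatibility with all face and degeneracy maps, giving an isomorphism of simplicial commutative augmented $A \wedge_R B$-algebras. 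Passing to geometric realisations yields the claim. The only bookkeeping obstacle is making sure the basepoint copies on the two sides are matched consistently, but since both sides place the coefficient algebra at the basepoint and the iso is natural in each factor, this is automatic.
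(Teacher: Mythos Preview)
Your argument is correct and is essentially an expanded version of the paper's one-line proof: the paper simply records the direct isomorphism $A \wedge_R (B \wedge_R \cdots \wedge_R B) \cong (A \wedge_R B) \wedge_A \cdots \wedge_A (A \wedge_R B)$ and asserts compatibility with the multiplication, whereas you supply the conceptual reason (left adjoints preserve coproducts) and spell out the check of the simplicial structure and augmentation. One small wording slip: face maps in the Loday construction are induced by multiplication (and the $B$-action at the basepoint), not by the unit---the unit enters only via degeneracies---but this does not affect your argument, since base change along $A \wedge_R (-)$ carries both unit and multiplication of $B$ to those of $A \wedge_R B$.
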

\begin{proof}
There is a direct isomorphism sending $A \wedge_R (B \wedge_R \ldots
\wedge_R B)$ to $(A \wedge_R B) \wedge_A \ldots \wedge_A (A \wedge_R
B) $  
and this isomorphism is compatible with the multiplication. 
\end{proof} 
This implies that stability is closed under base-change: 
\begin{proposition} \label{prop:induction}
  Let  $A$ and $E$ be cofibrant commutative $R$-algebra spectra. If $R
  \ra A$ is $R$-linearly stable, then so is $E \ra E \wedge_R A$. If
  $R \ra A$ is multiplicatively stable, then so is $E \ra E
  \wedge_R A$. 
\end{proposition}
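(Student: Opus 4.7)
The plan is a direct application of Lemma \ref{lem:basechange} together with the hypothesis. Suppose $\Sigma X \simeq \Sigma Y$ in $\ssets_*$. By Lemma \ref{lem:basechange} (applied with $R$, $A$, and $B$ replaced by $R$, $E$, and $A$ respectively) we have natural equivalences
\[ \cL_X^E(E \wedge_R A) \simeq E \wedge_R \cL_X^R(A), \qquad \cL_Y^E(E \wedge_R A) \simeq E \wedge_R \cL_Y^R(A),\]
and these equivalences are compatible with the augmentation to $E \wedge_R A$, as well as with whatever algebra/module structure one carries along. So it suffices to transport the equivalence $\cL_X^R(A) \simeq \cL_Y^R(A)$ across $E \wedge_R (-)$.

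In the $R$-linearly stable case, the given equivalence $\cL_X^R(A) \simeq \cL_Y^R(A)$ is an equivalence of $R$-modules. Smashing with $E$ over $R$ yields an equivalence $E \wedge_R \cL_X^R(A) \simeq E \wedge_R \cL_Y^R(A)$ of $E$-modules, which combined with the two instances of Lemma \ref{lem:basechange} above gives an equivalence of $E$-modules $\cL_X^E(E \wedge_R A) \simeq \cL_Y^E(E \wedge_R A)$, as required. In the multiplicatively stable case, the equivalence $\cL_X^R(A) \simeq \cL_Y^R(A)$ is one of augmented commutative $A$-algebras. Applying $E \wedge_R (-)$ gives an equivalence of augmented commutative $E \wedge_R A$-algebras, and one again composes with Lemma \ref{lem:basechange} to conclude.

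The only genuine thing to check is that $E \wedge_R (-)$ is homotopy invariant on the objects in question. This is where the cofibrancy hypothesis on $E$ enters: since $E$ is a cofibrant commutative $R$-algebra, $E$ is in particular cofibrant as an $R$-module (or one replaces the equivalence $\cL_X^R(A)\simeq \cL_Y^R(A)$ by a zig-zag through cofibrant $R$-modules, using the model structure on $R$-modules in \cite{ekmm}), so the derived smash product agrees with the point-set one. The main potential obstacle is thus to ensure that the augmentation and the multiplicative structure survive base change; but since $E \wedge_R (-)$ is a left adjoint functor on the categories of augmented $A$-algebras (to augmented $E \wedge_R A$-algebras) and on $R$-modules (to $E$-modules), it preserves the relevant structures, and the proposition follows.
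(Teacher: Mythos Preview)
Your proof is correct and follows essentially the same route as the paper's: invoke Lemma~\ref{lem:basechange} to identify $\cL_X^E(E\wedge_R A)\simeq E\wedge_R \cL_X^R(A)$, apply the stability hypothesis for $R\ra A$, and then smash the resulting equivalence with $E$ over $R$. You are somewhat more careful than the paper about the cofibrancy and structure-preservation issues, but the core argument is identical.
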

\begin{proof}
  Assume that $\Sigma X \simeq \Sigma Y$ in $\ssets_*$. Then by assumption
  $\cL^R_X(A) \simeq \cL^R_Y(A)$ as $R$-modules or as augmented
  commutative $A$-algebras. But then also $E \wedge_R \cL^R_X(A)
  \simeq E \wedge_R \cL^R_Y(A)$ and by Lemma \ref{lem:basechange} this implies
\[\cL_X^{E}(E \wedge_R A) \simeq \cL_Y^{E}(E \wedge_R A).\]
\end{proof}

\begin{remark}
Note that the above implication cannot be upgraded to an equivalence:
starting with the assumption that  $\cL_X^{E}(E \wedge_R A) \simeq
\cL_Y^{E}(E \wedge_R A)$, we get $E \wedge_R \cL^R_X(A) \simeq E
\wedge_R \cL^R_Y(A)$. Even if $\cL^R_X(A)$ and $\cL^R_Y(A)$ are $E$-local
in the category of $R$-modules, however, we don't know that the weak
equivalence 
$E \wedge_R \cL^R_X(A) \simeq E \wedge_R \cL^R_Y(A)$ is of the form 
$E \wedge_R f$ (or a zigzag of such maps), but for the $E$-local
Whitehead Theorem \cite[Lemma 1.2]{bousfield} we have to have a map and not
just an abstract isomorphism of $E_*$-homology groups. 
\end{remark}
Smashing with a fixed commutative $R$-algebra preserves stability: 
\begin{lemma}
  Let $A$, $B$ and $C$ be cofibrant commutative $R$-algebras. Then
  there is an equivalence of commutative augmented $C \wedge_R B$-algebras
  \[\cL_X^{C \wedge_R A}(C \wedge_R B) \simeq C \wedge_R \cL_X^A(B). \]
  Hence if $f \colon A \ra B$ is multiplicatively stable, then so is $C \wedge_R f \colon
  C \wedge_R A \ra C \wedge_R B$. 
\end{lemma}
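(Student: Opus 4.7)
My plan is to prove the equivalence by exhibiting a simplex-wise isomorphism of simplicial commutative $C \wedge_R B$-algebras and then passing to geometric realisation, following the same template as the just-proved Lemma \ref{lem:basechange}.

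First I would unpack the $p$-simplices. By definition, $\cL_X^{C \wedge_R A}(C \wedge_R B)_p$ is the iterated smash product of $|X_p|$ copies of $C \wedge_R B$ taken over $C \wedge_R A$, and $\bigl(C \wedge_R \cL_X^A(B)\bigr)_p$ is $C \wedge_R$ smashed with the iterated smash of $|X_p|$ copies of $B$ over $A$. The key input is that base change along $A \to C \wedge_R A$, i.e.\ the functor $(C \wedge_R A) \wedge_A (-) \colon \mathcal{C}_A \to \mathcal{C}_{C \wedge_R A}$, is a left adjoint, hence preserves coproducts, and coproducts in $\mathcal{C}_A$ (resp.\ $\mathcal{C}_{C \wedge_R A}$) are computed as $\wedge_A$ (resp.\ $\wedge_{C \wedge_R A}$). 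Since $(C \wedge_R A) \wedge_A B \cong C \wedge_R B$, this gives a natural isomorphism
\[
\bigwedge^{C \wedge_R A}_{x \in X_p}(C \wedge_R B) \;\cong\; (C \wedge_R A) \wedge_A \bigwedge^{A}_{x \in X_p} B \;\cong\; C \wedge_R \bigwedge^{A}_{x \in X_p} B
\]
of commutative $C \wedge_R B$-algebras (equivalently, this is a direct associativity-and-commutativity manipulation of smash products, as in Lemma \ref{lem:basechange}).

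Next I would check that these isomorphisms assemble into a map of simplicial objects. The face maps on both sides are induced by multiplying adjacent factors together or acting on the basepoint copy, and the degeneracies insert the unit; under the identification above, multiplication of two copies of $C \wedge_R B$ over $C \wedge_R A$ corresponds precisely to $C \wedge_R$ applied to multiplication of two copies of $B$ over $A$, and unit insertion is handled symmetrically. This is routine but is the one place one has to be a little careful. Taking geometric realisation (which commutes with $C \wedge_R -$ on cofibrant inputs thanks to the cofibrancy hypotheses on $A$, $B$, $C$) then yields the desired equivalence of commutative augmented $C \wedge_R B$-algebras.

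For the second assertion, suppose $\Sigma X \simeq \Sigma Y$ and $f \colon A \to B$ is multiplicatively stable, so that $\cL_X^A(B) \simeq \cL_Y^A(B)$ as augmented commutative $B$-algebras. Applying $C \wedge_R -$ gives $C \wedge_R \cL_X^A(B) \simeq C \wedge_R \cL_Y^A(B)$ as augmented commutative $C \wedge_R B$-algebras, and transporting along the first part of the lemma produces $\cL_X^{C \wedge_R A}(C \wedge_R B) \simeq \cL_Y^{C \wedge_R A}(C \wedge_R B)$ with the required structure, so $C \wedge_R f$ is multiplicatively stable. The only real obstacle is the bookkeeping in step two to ensure the simplex-wise isomorphism is genuinely simplicial and respects the augmentation; everything else is a formal consequence of $C \wedge_R -$ being a left adjoint between categories of commutative algebras.
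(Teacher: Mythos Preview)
Your proof is correct and follows essentially the same approach as the paper. The paper's proof is a one-liner that just records the key identity $(C \wedge_R B) \wedge_{(C \wedge_R A)} (C \wedge_R B) \simeq C \wedge_R (B \wedge_A B)$ and leaves the simplex-wise assembly, the simplicial compatibility, and the deduction of multiplicative stability implicit; you have simply spelled these steps out, invoking the left-adjoint description of base change to organise the iterated version of that same identity.
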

\begin{proof}
  The equivalence \[\cL_X^{C \wedge_R A}(C \wedge_R B) \simeq C
    \wedge_R \cL_X^A(B)\] is based on the equivalence
  \[ (C \wedge_R B) \wedge_{(C \wedge_R A)} (C \wedge_R B) \simeq C
    \wedge_R (B \wedge_A B).\] 
\end{proof}
\begin{proposition} \label{prop:regular}
  Let $R$ be a commutative ring and let $a \in R$ be a regular
  element. Then $HR \ra HR/a$ is multiplicatively stable. 
\end{proposition}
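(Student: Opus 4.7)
The plan is to present $HR/a$ as a pushout of commutative $HR$-algebras whose corners are either $HR$ itself or free (\ie, of the form $\mathbb{P}_{HR}(M)$), and then to transport this pushout through $\cL^{HR}_X(-)$, using that it preserves pushouts of commutative $HR$-algebras together with the earlier identification $\cL^{HR}_X(\mathbb{P}_{HR}(M)) \simeq \mathbb{P}_{HR}(X_+ \wedge M)$.

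Since $a$ is a regular element, a Koszul-style argument yields an equivalence of commutative $HR$-algebras
\[ HR/a \simeq HR \wedge_{\mathbb{P}_{HR}(HR)} HR, \]
where the two structure maps $\mathbb{P}_{HR}(HR) \to HR$ of the (derived) pushout are the augmentation $\epsilon$ (``$t \mapsto 0$'') and the map $\alpha$ (``$t \mapsto a$'') obtained from the $HR$-module map $a\colon HR \to HR$ via the free-forgetful adjunction. Applying $\cL^{HR}_X(-)$ and combining the earlier lemma on free commutative algebras with $\cL^{HR}_X(HR) \simeq HR$ gives
\[ \cL^{HR}_X(HR/a) \simeq HR \wedge_{\mathbb{P}_{HR}(X_+ \wedge HR)} HR, \]
with structure maps $\cL^{HR}_X(\epsilon)$ and $\cL^{HR}_X(\alpha)$.

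It remains to identify these two maps. Unwinding the chain of adjunctions in the proof of the earlier lemma, $\cL^{HR}_X(\epsilon)$ is the augmentation of $\mathbb{P}_{HR}(X_+ \wedge HR)$, and $\cL^{HR}_X(\alpha)$ is $\mathbb{P}_{HR}$ applied to the composite of $HR$-module maps
\[ X_+ \wedge HR \xrightarrow{p \wedge \id} S \wedge HR = HR \xrightarrow{a} HR, \]
where $p\colon X_+ \to S^0$ is the retraction onto the added disjoint basepoint. In particular, both $\cL^{HR}_X(\epsilon)$ and $\cL^{HR}_X(\alpha)$ factor through the canonical stable splitting $\Sigma^\infty_+ X \simeq S \vee \Sigma^\infty X$.

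An equivalence $\Sigma X \simeq \Sigma Y$ in $\ssets_*$ induces $\Sigma^\infty_+ X \simeq \Sigma^\infty_+ Y$, and this equivalence may be chosen to respect the retractions onto the $S$-summands (splitting as $\id_S \vee f$ for some $f\colon \Sigma^\infty X \simeq \Sigma^\infty Y$). Smashing with $HR$ and applying $\mathbb{P}_{HR}(-)$ produces an equivalence of commutative $HR$-algebras $\mathbb{P}_{HR}(X_+ \wedge HR) \simeq \mathbb{P}_{HR}(Y_+ \wedge HR)$ that is compatible with both structure maps of the pushout; for $\alpha$ this compatibility is precisely the content of the factorization above. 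Passing to pushouts yields $\cL^{HR}_X(HR/a) \simeq \cL^{HR}_Y(HR/a)$ as commutative augmented $HR/a$-algebras, where the unit and augmentation maps are respected since they are induced from natural constructions (the basepoint inclusion and the collapse $X \to *$). The main delicacy is this simultaneous compatibility on both sides of the pushout: the augmentation $\epsilon$ poses no difficulty by naturality, while for $\alpha$ it is the explicit factorization through the stable retraction to $S$ that does the work.
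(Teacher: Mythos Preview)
Your argument is correct and the starting pushout presentation $HR/a \simeq HR \wedge_{\mathbb{P}_{HR}(HR)} HR$ is exactly the one the paper uses (writing $HR[t]$ for $\mathbb{P}_{HR}(HR)$). The route you take from there, however, is genuinely different.

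The paper applies the \emph{base-change} lemma (Lemma~\ref{lem:basechange}) rather than the pushout lemma: it writes $\cL^{HR}_X(HR/a) \simeq HR \wedge_{HR[t]} \cL^{HR[t]}_X(HR)$, where the left $HR[t]$-structure on $HR$ is via $t\mapsto a$ and is fixed independently of $X$, while the right $HR[t]$-structure on $\cL^{HR[t]}_X(HR)$ is through the augmentation $t\mapsto 0$. One then only has to know that $\cL^{HR[t]}_X(HR) \simeq \cL^{HR[t]}_Y(HR)$ as augmented commutative $HR$-algebras, which is Proposition~\ref{prop:augmented}. The asymmetry of base change means the ``bad'' map $t\mapsto a$ never has to be tracked through the equivalence.

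You instead apply the pushout lemma (Lemma~\ref{lem:pushouts}) together with the free-algebra identification, obtaining $HR \wedge_{\mathbb{P}_{HR}(X_+\wedge HR)} HR$, and then you must control \emph{both} structure maps. You do this by the explicit computation that $\cL^{HR}_X(\alpha)$ and $\cL^{HR}_X(\epsilon)$ both factor through the stable retraction $\Sigma^\infty_+ X \to S$, so that choosing the module equivalence in the form $\id_S \vee f$ makes everything commute. This is more hands-on but has the virtue of being self-contained: you avoid invoking Proposition~\ref{prop:augmented}, which in turn rests on \cite[Theorem~3.3]{bhlprz}. The paper's route is shorter and more structural; yours exposes concretely why the map $t\mapsto a$ causes no trouble, namely because at the level of $X_+\wedge HR$ it only sees the sphere summand of the stable splitting.
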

\begin{proof}
  We consider the pushout $HR \wedge^L_{HR[t]} HR$ where the right
  algebra map $R[t] \ra R$ sends $t$ to zero
  and the left algebra map sends $t$ to $a$. Note that with respect to
  both of these maps $HR[t]$ is an augmented
  commutative $HR$-algebra spectrum. The K\"unneth spectral sequence for
  $\pi_*(HR \wedge^L_{HR[t]} HR)$ has as its $E^2$-term
  $\mathrm{Tor}^{R[t]}_{*,*}(R, R)$ and we take the standard free
  $R[t]$ resolution 
  \[ \xymatrix@1{ 0 \ar[r] & R[t] \ar[r]^{t} & R[t]} \]
  of $R$. Applying $(-)\otimes_{R[t]} R$ yields
  \[ \xymatrix{ 0 \ar[r]& R[t] \otimes_{R[t]} R \ar[d]_\cong 
\ar[rrr]^{t \otimes \mathrm{id}}_{=\mathrm{id}
        \otimes a}
      && &  R[t] \otimes_{R[t]} R  \ar[d]^\cong \\
      0 \ar[r] & R \ar[rrr]^a &&& R.       } \]
  Note, that the regularity of $a$ is needed to ensure injectivity on the left hand side.

  We apply Lemma \ref{lem:basechange} and choose a cofibrant model of $HR$ as a commutative
  $HR[t]$-algebra
  and obtain
  \[ \cL^{HR}_X(HR/a) \simeq \cL^{HR}_X(HR \wedge_{HR[t]} HR) \simeq
    HR \wedge_{HR[t]} \cL^{HR[t]}_X(HR)\] 
  where the right $HR[t]$-module structure of $\cL^{HR[t]}_X(HR)$
  factors through the augmentation map 
  sending $t$ to $0$. Assume that $\Sigma X \simeq \Sigma Y$ in $\ssets_*$.
  By Proposition \ref{prop:augmented} we have that $HR[t] \ra HR$ is
  multiplicatively stable, so $\cL^{HR[t]}_X(HR) \simeq
  \cL^{HR[t]}_Y(HR)$ as commutative augmented  
  $HR$-algebras. This yields an equivalence of commutative augmented
  $HR \wedge_{HR[t]} HR \simeq HR/a$-algebras between
  $\cL^{HR}_X(HR/a)$ and $\cL^{HR}_Y(HR/a)$.  
\end{proof}  
\begin{remark}
The above result can be used for calculating torus homology for instance for $H\Z \ra
H\Z/p\Z$ for every prime $p$: We know the homotopy type of $\cL_{S^k}^{H\Z}(H\Z/p\Z)$ by
\cite[Proposition 5.3]{bhlprz} for all $k$ and therefore we get the homotopy type of
$\cL_{(S^1)^n}^{H\Z}(H\Z/p\Z)$ as smash products over $H\Z/p\Z$ of $\binom{n}{k}$ copies of
$\cL^{H\Z}_{S^k}(H\Z/p\Z)$ for $1 \leq k \leq n$.
\end{remark}

\begin{corollary} \label{cor:squarezero}
  For every commutative ring $R$ and every regular element $a \in R$
  the square-zero extension  
  \[ HR/a \ra HR/a \vee \Sigma HR/a\] is multiplicatively stable. In
  particular, for every commutative ring $R$, $HR \ra HR \vee  \Sigma
  HR$ is multiplicatively stable. 
\end{corollary}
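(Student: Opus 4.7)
The plan is to derive the first statement as a base-change consequence of Proposition~\ref{prop:regular}, which already provides the multiplicative stability of $HR \to HR/a$ for a regular element $a$. The second, ``in particular'' statement is then an immediate special case: replacing $R$ by the polynomial ring $R[t]$ and taking the regular element $t \in R[t]$, we have $HR[t]/t \simeq HR$, and the square-zero extension becomes exactly $HR \to HR \vee \Sigma HR$. So it suffices to prove the first statement.

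For the first statement, I would apply Proposition~\ref{prop:induction} with $E = HR/a$ to the multiplicatively stable map $HR \to HR/a$. This yields that
\[ HR/a \longrightarrow HR/a \wedge_{HR} HR/a \]
is multiplicatively stable. It then remains to identify the target with the trivial square-zero extension $HR/a \vee \Sigma HR/a$ as commutative $HR/a$-algebras. The relevant K\"unneth spectral sequence is exactly the one already run in the proof of Proposition~\ref{prop:regular}: the resolution $R \xrightarrow{a} R$ of $R/a$, which is exact precisely because $a$ is regular, yields $\mathrm{Tor}_0^R(R/a, R/a) = R/a$, $\mathrm{Tor}_1^R(R/a, R/a) = R/a$ and vanishing higher Tor. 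Hence $\pi_*(HR/a \wedge_{HR} HR/a)$ is concentrated in degrees $0$ and $1$ with value $R/a$ in each.

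The one non-formal point, and really the main thing to check, is that this K\"unneth identification respects the commutative $HR/a$-algebra structure and not merely the underlying $HR/a$-module structure, so that the multiplicative stability supplied by Proposition~\ref{prop:induction} truly delivers multiplicative stability of the square-zero extension. This is however forced by degree reasons: the product of any two classes in $\pi_1$ must land in $\pi_2 = 0$, so the multiplication on $HR/a \wedge_{HR} HR/a$ is the trivial square-zero one, and the two commutative $HR/a$-algebras $HR/a \wedge_{HR} HR/a$ and $HR/a \vee \Sigma HR/a$ agree.
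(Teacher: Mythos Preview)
Your approach is essentially the paper's: base-change the multiplicative stability of $HR\to HR/a$ along $HR\to HR/a$ (the paper phrases this via Lemma~\ref{lem:basechange}, you via Proposition~\ref{prop:induction}, which is the same content), then identify $HR/a\wedge_{HR}HR/a$ with the square-zero extension, and finally specialize to $t\in R[t]$ for the ``in particular'' clause.

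The one place where your argument is thinner than the paper's is the last identification. You compute $\pi_*(HR/a\wedge_{HR}HR/a)\cong\Lambda_{R/a}(x)$ with $|x|=1$ and then assert that, because the product $\pi_1\otimes\pi_1\to\pi_2$ vanishes, ``the two commutative $HR/a$-algebras \ldots\ agree.'' That step needs more: an isomorphism of homotopy \emph{rings} does not by itself produce an equivalence of commutative ring \emph{spectra}. One has to actually build a map of commutative augmented $HR/a$-algebras between $HR/a\wedge_{HR}HR/a$ and $HR/a\vee\Sigma HR/a$ and check it is an equivalence. The paper does not do this in-line either; it invokes \cite[Proposition~2.1]{dlr}, which supplies exactly this: any commutative augmented $HR/a$-algebra whose homotopy is an exterior algebra on a single odd-degree class is equivalent, as such, to the corresponding trivial square-zero extension. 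Citing that (or reproducing its short argument using the augmentation to split off the unit and then classifying the augmentation ideal as an $HR/a$-module) closes the gap.
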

\begin{proof}
  As $HR \ra HR/a$ is multiplicatively stable we get by Lemma
  \ref{lem:basechange} that 
  \[ \cL_X^{HR/a}(HR/a \wedge_{HR} HR/a) \simeq HR/a \wedge_{HR}
    \cL_X^{HR}(HR/a)\] 
 as augmented commutative $HR/a \wedge_{HR} HR/a$-algebras and hence
 $HR/a \ra HR/a \wedge_{HR} HR/a$ is multiplicatively stable. The
 K\"unneth spectral sequence yields 
  that $\pi_*(HR/a \wedge_{HR} HR/a) \cong \Lambda_{R/a}(x)$ with $|x|
  =1$. By \cite[Proposition 2.1]{dlr} this implies that
  \[ HR/a \wedge_{HR} HR/a \simeq HR/a \vee \Sigma HR/a\]
  as a commutative augmented $HR/a$-algebra. 

Considering the regular element $t \in R[t]$ gives that $HR \ra HR
\vee \Sigma HR$ is multiplicatively stable. 

\end{proof}
Stability is inherited by Loday constructions.
\begin{proposition} \label{prop:lodaystable}
If $R \ra A$ is multiplicatively stable, then so is $R \ra \cL_Z^R(A)$
for any $Z$.  
\end{proposition}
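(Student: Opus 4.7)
The plan is to reduce the multiplicative stability of $R \ra \cL^R_Z(A)$ to that of $R \ra A$ by applying the functor $\cL^R_Z(-)$ to the equivalences produced by the latter. The key ingredient is the identification, for any commutative $R$-algebra $B$ and any pointed simplicial set $W$, of
\[
\cL^R_W(\cL^R_Z(B)) \simeq \cL^R_{W \times Z}(B) \simeq \cL^R_Z(\cL^R_W(B))
\]
as augmented commutative $\cL^R_Z(B)$-algebras, where on the middle term the $\cL^R_Z(B)$-algebra structure and augmentation come respectively from the inclusion $\{*_W\} \times Z \hookrightarrow W \times Z$ and the collapse $W \times Z \to Z$. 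This identification is already implicit in the proof of Theorem \ref{thm:lxstable}; it reflects the fact that $\mathcal{C}_R$ is tensored over pointed simplicial sets in a way compatible with the Cartesian product, so that $(W \times Z) \otimes_R B \simeq W \otimes_R (Z \otimes_R B)$.

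Given $\Sigma X \simeq \Sigma Y$ in $\ssets_*$, the multiplicative stability of $R \ra A$ produces an equivalence $\cL^R_X(A) \simeq \cL^R_Y(A)$ of augmented commutative $A$-algebras. Applying $\cL^R_Z(-)$, which is a homotopy functor on cofibrant commutative $R$-algebras and which sends augmented commutative $A$-algebras to augmented commutative $\cL^R_Z(A)$-algebras by naturality applied to the unit $A \ra \cL^R_X(A)$ and the augmentation $\cL^R_X(A) \ra A$, yields an equivalence
\[
\cL^R_Z(\cL^R_X(A)) \simeq \cL^R_Z(\cL^R_Y(A))
\]
of augmented commutative $\cL^R_Z(A)$-algebras. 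Chaining this with the identification displayed above (and the obvious isomorphism $X \times Z \cong Z \times X$ in $\ssets_*$) gives
\[
\cL^R_X(\cL^R_Z(A)) \simeq \cL^R_Z(\cL^R_X(A)) \simeq \cL^R_Z(\cL^R_Y(A)) \simeq \cL^R_Y(\cL^R_Z(A))
\]
as augmented commutative $\cL^R_Z(A)$-algebras, which is exactly the multiplicative stability of $R \ra \cL^R_Z(A)$.

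The main obstacle is verifying that the Fubini-type identification above genuinely respects the augmented commutative $\cL^R_Z(B)$-algebra structure, rather than merely being one of commutative $R$-algebras. This amounts to a bookkeeping check at the level of $p$-simplices, where both $\cL^R_W(\cL^R_Z(B))_p$ and $\cL^R_{W \times Z}(B)_p$ are canonically identified with $\bigwedge^R_{(w,z) \in W_p \times Z_p} B$, and where one confirms that the unit $\cL^R_Z(B) \ra \cL^R_{W \times Z}(B)$ corresponds to placing the factors at coordinates $(*_W, z)$ while the augmentation $\cL^R_{W \times Z}(B) \ra \cL^R_Z(B)$ corresponds to multiplying over the $W$-coordinates. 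A subsidiary technicality is choosing cofibrant replacements so that $\cL^R_Z(-)$ preserves the zigzag of equivalences witnessing $\cL^R_X(A) \simeq \cL^R_Y(A)$; this is routine given the cofibrancy assumptions built into the definition of the Loday construction.
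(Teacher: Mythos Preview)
Your proof is correct, but the route differs from the paper's. The paper argues on the space side: from $\Sigma X \simeq \Sigma Y$ it deduces $\Sigma(X \times Z) \simeq \Sigma(Y \times Z)$ via the splitting $\Sigma(X \times Z) \simeq \Sigma X \vee \Sigma Z \vee \Sigma(X \wedge Z)$, and then applies the multiplicative stability of $R \ra A$ directly to the pair $(X \times Z,\, Y \times Z)$ to obtain $\cL^R_{X \times Z}(A) \simeq \cL^R_{Y \times Z}(A)$; the Fubini identification $\cL^R_X(\cL^R_Z(A)) \simeq \cL^R_{X \times Z}(A)$ then finishes. You instead work on the algebra side: you apply the stability of $R \ra A$ to $(X, Y)$ itself, obtain $\cL^R_X(A) \simeq \cL^R_Y(A)$ as augmented commutative $A$-algebras, and then push this through the homotopy functor $\cL^R_Z(-)$, using the Fubini identification twice (and the symmetry $X \times Z \cong Z \times X$).

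Each approach has something to recommend it. The paper's argument is shorter and uses only one instance of Fubini, but it invokes the suspension splitting of a product and, strictly read, yields the equivalence only as augmented commutative $A$-algebras; the upgrade to the augmented $\cL^R_Z(A)$-algebra structure required by the definition is left implicit. Your argument avoids the suspension splitting and, because $\cL^R_Z(-)$ takes augmented $A$-algebras to augmented $\cL^R_Z(A)$-algebras by naturality, delivers the $\cL^R_Z(A)$-algebra structure on the nose; the price is the bookkeeping you flag at the end (compatibility of Fubini with the $\cL^R_Z(A)$-augmentation, and cofibrancy so that $\cL^R_Z(-)$ preserves the zigzag), which you handle correctly.
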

\begin{proof}
  Assume that $\Sigma X \simeq \Sigma Y$. As $\Sigma (X \times Z)
  \simeq \Sigma X \vee \Sigma Z \vee \Sigma X \wedge Z$ we get that 
  \[ \Sigma (X \times Z) \simeq \Sigma X \vee \Sigma Z \vee \Sigma X
    \wedge Z \simeq \Sigma Y \vee \Sigma Z \vee \Sigma Y \wedge Z
    \simeq \Sigma (Y \times Z) \]
and thus, as $R \ra A$ is multiplicatively stable
  \[ \cL_X^R(\cL_Z^R(A)) \simeq \cL_{X \times Z}^R(A) \simeq \cL_{Y
      \times Z}^R(A) \simeq \cL_Y^R(\cL_Z^R(A)).\]   
\end{proof}
\begin{remark}
One can interpret Proposition \ref{prop:lodaystable} as the statement
that Loday constructions preserve stability because for all $Z$ there
is an equivalence of augmented commutative $R$-algebras $R \simeq
\cL_Z^R(R)$. 
\end{remark}

The Loday construction behaves nicely with respect to pushouts: 
\begin{lemma} \label{lem:pushouts}
  If $C \leftarrow A \ra B$ is a diagram of cofibrations of
  commutative $R$ algebras and if $A$ is cofibrant as a commutative
  $R$-algebra, then 
  \[ \cL^R_X(C \wedge_A B)  \simeq \cL^R_X(C) \wedge_{\cL^R_X(A)} \cL^R_X(B).\]
\end{lemma}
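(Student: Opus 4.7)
The plan is to exploit that for each simplicial set $X$, the Loday construction $\cL^R_X(-)$ on commutative $R$-algebras agrees with the simplicial tensoring $X \otimes_R (-)$, which is a left adjoint (in fact a left Quillen functor) on the category of commutative $R$-algebras. As such, it commutes with colimits — in particular with pushouts — at each simplicial level, and it preserves cofibrations; the geometric realization step is then essentially formal.

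First, I would check the simplicial-level isomorphism. At each degree $p$, the $p$-simplices of $\cL^R_X(C \wedge_A B)$ are
\[ \bigwedge^R_{x \in X_p \setminus *} (C \wedge_A B). \]
Since the $R$-smash product is the coproduct in commutative $R$-algebras and coproducts preserve all colimits in each variable, a direct universal-property argument (or an iterated application of the formula $(C \wedge_A B) \wedge_R D \cong (C \wedge_R D) \wedge_{A \wedge_R D} (B \wedge_R D)$) yields a natural isomorphism
\[ \bigwedge^R_{x \in X_p \setminus *} (C \wedge_A B) \;\cong\; \Bigl(\bigwedge^R_{x \in X_p \setminus *} C\Bigr) \wedge_{\bigwedge^R_{x \in X_p \setminus *} A} \Bigl(\bigwedge^R_{x \in X_p \setminus *} B\Bigr). \]
These isomorphisms are compatible with the face maps (which multiply in one of the factors or act on the basepoint copy) and the degeneracy maps (which insert units via the unit $\eta$), because multiplication and unit maps are natural with respect to pushouts of commutative $R$-algebras. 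This gives an isomorphism of simplicial commutative $R$-algebras
\[ \cL^R_X(C \wedge_A B) \;\cong\; \cL^R_X(C) \wedge_{\cL^R_X(A)} \cL^R_X(B). \]

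Second, I would pass to geometric realizations. Realization commutes with colimits, so the realization of the right-hand simplicial pushout is the strict pushout of the realizations. To identify this strict pushout with the derived smash product appearing in the statement, I would invoke the cofibrancy hypotheses: since $X \otimes_R (-) = \cL^R_X(-)$ is a left Quillen functor on commutative $R$-algebras \cite[VII, \S 2, \S 3]{ekmm}, it preserves cofibrant objects and cofibrations. So $\cL^R_X(A)$ is a cofibrant commutative $R$-algebra and the maps $\cL^R_X(A) \to \cL^R_X(C)$, $\cL^R_X(A) \to \cL^R_X(B)$ are cofibrations; hence the strict pushout coincides with the derived smash product on the right-hand side of the claim.

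The main obstacle is purely homotopical bookkeeping in this second step: one needs that $\cL^R_X(-)$ really is a left Quillen functor on commutative $R$-algebras so that the cofibrancy of $A$, $B$, $C$ over $R$ transfers to cofibrancy of $\cL^R_X(A)$, $\cL^R_X(B)$, $\cL^R_X(C)$, and that the realization of a simplicial commutative $R$-algebra built from such cofibrant pieces has the expected homotopy type. Both are standard in the EKMM framework but constitute the only non-formal content of the argument — the simplicial identification itself is a direct consequence of the universal property of $\wedge_R$.
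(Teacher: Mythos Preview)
Your proposal is correct and follows essentially the same route as the paper: the paper's proof is a one-line sketch invoking the ``exchange of priorities in a colimit diagram'' via the identity
\[ (C \wedge_A B) \wedge_R (C \wedge_A B) \simeq (C \wedge_R C) \wedge_{A \wedge_R A} (B \wedge_R B), \]
which is exactly your simplicial-level isomorphism in the case $|X_p|=2$. Your write-up is considerably more detailed about the passage to realizations and the homotopical bookkeeping, but the underlying idea is identical.
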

\begin{proof}
This equivalence is proven using an exchange of priorities in a
colimit diagram based on the equivalence
\[ (C \wedge_A B) \wedge_R (C \wedge_A B) \simeq (C \wedge_R C)
  \wedge_{A \wedge_R A} (B \wedge_R B). \]
\end{proof}
\begin{remark}
  Beware that the above identification does \emph{not} imply that
  multiplicative stability is 
  closed under pushouts in the category of commutative $R$-algebras.  
Knowing that
  $\cL_X^R(D) \simeq \cL_Y^R(D)$ as commutative augmented $D$-algebras
  for $D = A,B$ and $C$
does not imply that
  $\cL^R_X(C \wedge_A B)$ is equivalent to $\cL^R_Y(C \wedge_A B)$
  because we cannot guarantee that the  
  equivalences $\cL_X^R(D) \simeq \cL_Y^R(D)$ commute with the
  structure maps in the pushout diagram.

  For example we know that $H\Q \ra H\Q[t]$ and $H\Q \ra H\Q[t,x]$ are
  multiplicatively stable, but 
  $H\Q \ra H\Q[t]/t^2$ is not stable by \cite{dt}, despite the fact
  that  we can express the latter as a pushout
  $H\Q[t] \wedge_{H\Q[t,x]} H\Q[t]$ where $x$ maps to $t^2$ on the
  left hand side and to $0$ on the right hand side. 
\end{remark}
In the case of $A=R$ we \emph{do} get a stability result:
\begin{corollary} \label{cor:coproducts}
Assume that $R \ra B$ and $R \ra C$ are multiplicatively stable. Then
so is $R \ra B \wedge_R C$.  
\end{corollary}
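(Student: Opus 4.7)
The plan is to reduce the coproduct case directly to Lemma \ref{lem:pushouts} and then exploit that the obstacle highlighted in the preceding Remark disappears when the base of the pushout is $R$ itself.

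First, I would observe that for the diagram $B \leftarrow R \to C$, Lemma \ref{lem:pushouts} (with $A = R$, which is cofibrant as a commutative $R$-algebra) gives
\[ \cL^R_X(B \wedge_R C) \simeq \cL^R_X(B) \wedge_{\cL^R_X(R)} \cL^R_X(C). \]
Since $\cL^R_X(R) \simeq R$ as an augmented commutative $R$-algebra for every pointed simplicial set $X$, this simplifies to a natural equivalence of augmented commutative $B \wedge_R C$-algebras
\[ \cL^R_X(B \wedge_R C) \simeq \cL^R_X(B) \wedge_R \cL^R_X(C). \]

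Next, assume $\Sigma X \simeq \Sigma Y$ in $\ssets_*$. By the multiplicative stability of $R \to B$ and $R \to C$ we obtain equivalences
\[ \cL^R_X(B) \simeq \cL^R_Y(B) \quad \text{and} \quad \cL^R_X(C) \simeq \cL^R_Y(C) \]
of augmented commutative $B$- resp.\ $C$-algebras, and in particular of commutative $R$-algebras under $R$. Smashing these two equivalences over $R$ yields an equivalence of commutative $R$-algebras
\[ \cL^R_X(B) \wedge_R \cL^R_X(C) \simeq \cL^R_Y(B) \wedge_R \cL^R_Y(C), \]
which is compatible with the augmentations to $B \wedge_R C$.

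Combining this with the identification from Lemma \ref{lem:pushouts} gives the desired equivalence $\cL^R_X(B \wedge_R C) \simeq \cL^R_Y(B \wedge_R C)$ of augmented commutative $B \wedge_R C$-algebras. The potential subtlety warned about in the Remark — that the two stable equivalences need not be compatible with structure maps coming from a shared base $A$ — is absent here because the only structure map in play is the unit $R \to \cL^R_X(-)$, which is automatically respected by any equivalence of commutative augmented $R$-algebras. There is essentially no hard step; the content is that when $A = R$, the pushout formula of Lemma \ref{lem:pushouts} reduces to a plain smash product over $R$, and smash products of multiplicatively stable equivalences are again equivalences of the appropriate augmented algebra structure.
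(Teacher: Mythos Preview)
Your proposal is correct and follows essentially the same route as the paper: apply Lemma~\ref{lem:pushouts} with $A=R$, use $\cL_X^R(R)\simeq R$ to collapse the base, then smash the two multiplicative-stability equivalences over $R$ and observe that the result is an equivalence of augmented commutative $B\wedge_R C$-algebras. Your added explanation of why the compatibility issue from the preceding Remark vanishes when $A=R$ is a nice clarification, but the argument itself matches the paper's proof.
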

\begin{proof}
  If $\Sigma X \simeq \Sigma Y$ in $\ssets_*$, then $\cL_X^R(B) \simeq \cL_Y^R(B)$ and
  $\cL_X^R(C) \simeq \cL_Y^R(C)$
  by assumption and these equivalence are of commutative augmented
  $B$- and $C$-algebras, so in particular of  commutative augmented
  $R$-algebras. Note that $\cL_X^R(R) \simeq R$ for all pointed
  $X$. Hence by Lemma \ref{lem:pushouts} we obtain  
  \[ \cL_X^R(B \wedge_R C) \simeq \cL_X^R(B) \wedge_R \cL_X^R(C)
    \simeq \cL_Y^R(B) \wedge_R \cL_Y^R(C) \simeq \cL_Y^R(B \wedge_R C) \]
  and this is an equivalence of commutative augmented $B \wedge_R C$-algebras. 
\end{proof}

\begin{example}
  We know from Proposition \ref{prop:regular} that $HR \ra HR/a$ is
  multipliatively stable for every commutative ring $R$ and every
  regular element $a \in R$. Corollary \ref{cor:coproducts} implies
  that $HR \ra HR/a \wedge_{HR} HR/a$ is multiplicatively stable and
  as before we know that $HR/a \wedge_{HR} HR/a \simeq HR/a \vee
  \Sigma HR/a$, so $HR \ra HR/a \vee \Sigma HR/a$ is multiplicatively
  stable. For instance $H\Z \ra H\Z/p \vee \Sigma H\Z/p$ is
  multiplicatively stable for all primes $p$.  
\end{example}  
\begin{example}
  Taking the coproduct (with a cofibrant model of $H\Z[t]$ as a
  commutative $H\Z$-algebra)  
  \[ \xymatrix{
H\Z \ar[r] \ar[d] & H\Z/p \ar[d] \\
H\Z[t] \ar[r] & H\Z[t] \wedge^L_{H\Z} H\Z/p \simeq H\Z/p[t]} 
\]
shows that $H\Z \ra H\Z/p[t]$ is multiplicatively stable. 
\end{example}
  
\begin{corollary}
Let $R$ be a commutative ring and let $(a_1,\ldots, a_n)$ be a regular
sequence in $R$, then $HR \ra HR/(a_1,\ldots, a_n)$ is multiplicatively stable. 
\end{corollary}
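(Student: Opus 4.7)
The plan is to generalize the proof of Proposition \ref{prop:regular}, replacing the single variable $t$ by $n$ variables $t_1, \ldots, t_n$. Consider the polynomial ring $HR[t_1, \ldots, t_n]$ together with two augmentations to $HR$: the left one sending $t_i \mapsto a_i$ and the right one sending each $t_i \mapsto 0$. The key algebraic claim is the equivalence
\[ HR \wedge^L_{HR[t_1, \ldots, t_n]} HR \simeq HR/(a_1, \ldots, a_n) \]
of commutative $HR$-algebras. To establish it, I would use the K\"unneth spectral sequence, whose $E^2$-term is $\mathrm{Tor}^{R[t_1, \ldots, t_n]}_{*,*}(R, R)$. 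Resolving $R \cong R[t_1, \ldots, t_n]/(t_1, \ldots, t_n)$ by the Koszul complex on $(t_1, \ldots, t_n)$ (a free resolution because $(t_1, \ldots, t_n)$ is a regular sequence in the polynomial ring) and then tensoring along $t_i \mapsto a_i$ yields the Koszul complex on $(a_1, \ldots, a_n)$ over $R$. The hypothesis that $(a_1, \ldots, a_n)$ is a regular sequence in $R$ guarantees that this complex is itself a free resolution of $R/(a_1, \ldots, a_n)$, so the $E^2$-term is concentrated in degree zero with value $R/(a_1, \ldots, a_n)$, and the spectral sequence collapses.

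Given this identification, I would apply Lemma \ref{lem:basechange} with a cofibrant model of $HR$ as a commutative $HR[t_1, \ldots, t_n]$-algebra via $t_i \mapsto 0$, obtaining
\[ \cL_X^{HR}(HR/(a_1, \ldots, a_n)) \simeq HR \wedge_{HR[t_1, \ldots, t_n]} \cL_X^{HR[t_1, \ldots, t_n]}(HR), \]
where the right $HR[t_1, \ldots, t_n]$-module structure on the Loday construction factors through the augmentation $t_i \mapsto 0$. Since $HR[t_1, \ldots, t_n]$ is an augmented commutative $HR$-algebra via $t_i \mapsto 0$, Proposition \ref{prop:augmented} shows that $HR[t_1, \ldots, t_n] \to HR$ is multiplicatively stable. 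Hence $\Sigma X \simeq \Sigma Y$ implies $\cL_X^{HR[t_1, \ldots, t_n]}(HR) \simeq \cL_Y^{HR[t_1, \ldots, t_n]}(HR)$ as augmented commutative $HR$-algebras, and smashing over $HR[t_1, \ldots, t_n]$ with $HR$ via the other augmentation $t_i \mapsto a_i$ preserves this equivalence, yielding $\cL_X^{HR}(HR/(a_1, \ldots, a_n)) \simeq \cL_Y^{HR}(HR/(a_1, \ldots, a_n))$ as augmented commutative $HR/(a_1, \ldots, a_n)$-algebras.

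The main obstacle is the K\"unneth spectral sequence computation: the regular sequence hypothesis is used essentially to ensure that after base-changing the Koszul resolution along $t_i \mapsto a_i$ the resulting complex remains acyclic in positive degrees. A more naive induction on $n$ that peels off one $a_i$ at a time is blocked both because $a_j$ for $j \geq 2$ need not be regular in $R$ itself (only in $R/(a_1, \ldots, a_{j-1})$) and because multiplicative stability is not transitive, so the stability of $HR \to HR/(a_1, \ldots, a_{n-1})$ (by induction) and of $HR/(a_1, \ldots, a_{n-1}) \to HR/(a_1, \ldots, a_n)$ cannot be directly combined.
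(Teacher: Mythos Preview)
Your proof is correct and takes a genuinely different route from the paper. The paper argues by induction on $n$: assuming $HR \to HR/(a_1,\ldots,a_{n-1})$ is multiplicatively stable, it invokes Corollary~\ref{cor:coproducts} (closure of multiplicative stability under coproducts over $HR$) together with the stability of $HR \to HR/a_n$ from Proposition~\ref{prop:regular}, and then identifies the coproduct $HR/(a_1,\ldots,a_{n-1}) \wedge^L_{HR} HR/a_n$ with $HR/(a_1,\ldots,a_n)$ via a one-variable Tor computation. You instead handle all $n$ variables simultaneously, replacing $R[t]$ by $R[t_1,\ldots,t_n]$ and the two-term resolution by the Koszul complex, and rerun the argument of Proposition~\ref{prop:regular} verbatim. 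Your closing remark about induction is half off-target: the obstacle you name---non-transitivity of stability---is not the mechanism the paper uses, since closure under coproducts \emph{is} available. But your other concern, that $a_j$ for $j \geq 2$ need not be regular in $R$ itself, is well-taken: the paper's inductive step needs $HR \to HR/a_n$ to be multiplicatively stable, which Proposition~\ref{prop:regular} only supplies when $a_n$ is a non-zero-divisor in $R$. Your direct Koszul argument sidesteps this entirely, since it only uses acyclicity of the Koszul complex on the whole sequence $(a_1,\ldots,a_n)$, which is exactly what the regular-sequence hypothesis provides.
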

\begin{proof}
 We use induction.  We have shown in Proposition \ref{prop:regular} that $HR\ra HR/a_1$ is multiplicatively stable, so we can inductively assume that $HR\ra HR/(a_1,\ldots, a_{n-1})$ is multiplicatively stable.   We use the fact that the coproduct $HR/(a_1,\ldots, a_{n-1})
  \wedge^L_{HR} HR/a_n$ of
  \[\xymatrix{HR \ar[r] \ar[d] & HR/(a_1,\ldots, a_{n-1})\\ HR/a_n} \]
 is $HR/(a_1, \ldots, a_n)$, and then by Corollary \ref{cor:coproducts} the claim follows.
  
  This identification of the coproduct be proven using the K\"unneth spectral sequence $$\Tor_*^R(R/(a_1,\ldots, a_{n-1}), R/a_n)\Rightarrow \pi_*(HR/(a_1,\ldots, a_{n-1})\wedge^L_{HR} HR/a_n).$$ The Tor can be calculated by tensoring 
  the standard
  free  resolution $\xymatrix{0 \ar[r] & R \ar[r]^{a_n} & R}$ of $R/a_n$ with
  $R/(a_1,\ldots, a_{n-1})$ to  obtain
  \[ \xymatrix@1{0 \ar[r] & R/(a_1,\ldots, a_{n-1}) \otimes_R R
      \ar[rr]^{\mathrm{id}
        \otimes a_n} & & R/(a_1,\ldots, a_{n-1}) \otimes_R R }.\] 
 Since multiplication by $a_n$ is injective on $R/(a_1,\ldots, a_{n-1})$, the $E^2$ term of the spectral sequence consists only of $R/(a_1, \ldots, a_n)$ and we are done.
\end{proof}

\begin{proposition}
  If $S \ra A$ and $S \ra B$ are cofibrations of commutative
  $S$-algebras and if $A$ and $B$ are 
  (multiplicatively) stable, then if $X$ and $Y$ are connected and
  $\Sigma X \simeq \Sigma Y$, then 
  \[ \cL_X^S(A \times B) \simeq \cL_Y^S(A \times B)\]
  as commutative $S$-algebras. 
\end{proposition}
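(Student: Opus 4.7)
The strategy is to reduce the statement to the already-known stability of $A$ and $B$, via a splitting of the Loday construction of a product. The key ingredient is that for every pointed \emph{connected} simplicial set $Z$ the canonical map
\[
\cL_Z^S(A \times B) \longrightarrow \cL_Z^S(A) \times \cL_Z^S(B),
\]
induced by applying $\cL_Z^S$ to the two ring projections $A \times B \to A$ and $A \times B \to B$, is an equivalence of commutative $S$-algebras. Once this splitting is in hand, the theorem is immediate: the hypothesis $\Sigma X \simeq \Sigma Y$ together with stability of $A$ and $B$ yields $\cL_X^S(A) \simeq \cL_Y^S(A)$ and $\cL_X^S(B) \simeq \cL_Y^S(B)$, and applying the splitting at both $X$ and $Y$ (which are connected) gives
\[
\cL_X^S(A \times B) \simeq \cL_X^S(A) \times \cL_X^S(B) \simeq \cL_Y^S(A) \times \cL_Y^S(B) \simeq \cL_Y^S(A \times B)
\]
as commutative $S$-algebras.

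To establish the splitting it suffices to verify that the comparison map is a weak equivalence of underlying spectra, since the map is already a map of commutative $S$-algebras. The plan is a simplicial analysis. At the $p$-th level we have $\cL_Z^S(A \times B)_p = (A \times B)^{\wedge Z_p}$, and since finite products and coproducts of spectra agree we have $A \times B \simeq A \vee B$ as spectra; consequently the $n$-fold smash power decomposes as a wedge indexed by colorings $f \colon Z_p \to \{A, B\}$ with summand $\bigwedge_{z \in Z_p} f(z)$. The simplicial face maps come from multiplication in $A \times B$, and orthogonality of the two idempotents arising from the product decomposition forces every ``mixed'' multiplication (bringing together an $A$-factor and a $B$-factor at a face-identified pair of vertices) to be null. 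Hence in the geometric realization only wedge summands indexed by colorings that are constant on the path components of $Z$ survive, and for connected $Z$ these are precisely the two constant colorings $f \equiv A$ and $f \equiv B$, producing $\cL_Z^S(A) \vee \cL_Z^S(B) \simeq \cL_Z^S(A) \times \cL_Z^S(B)$.

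The main obstacle is making the ``mixed summands die in the realization'' step rigorous. One clean route is to filter the simplicial spectrum by colorings and analyze the resulting spectral sequence, whose $E^2$-page decomposes along colorings and in which the mixed pieces are acyclic by orthogonality of the idempotents together with the connectedness of $Z$. An alternative is to start from spheres $Z = S^n$, where the splitting $\THH^{[n]}(A \times B) \simeq \THH^{[n]}(A) \times \THH^{[n]}(B)$ is the classical iterated Hochschild splitting, and bootstrap by induction on non-degenerate cells using Lemma \ref{lem:pushouts}; connectedness is used at each step to ensure that every cell attachment stays within a single color component.
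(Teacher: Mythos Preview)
Your approach is the same as the paper's: both invoke the splitting $\cL_Z^S(A \times B) \simeq \cL_Z^S(A) \times \cL_Z^S(B)$ for connected $Z$, after which the result is immediate from stability of $A$ and $B$; the paper simply cites this splitting as \cite[Proposition~8.4]{bhlprz} rather than reproving it. One small correction to your sketch: Lemma~\ref{lem:pushouts} is about pushouts in the \emph{algebra} variable, not the simplicial-set variable; the cell-induction route you describe needs instead the gluing equivalence $\cL_{X \cup_W Y}^S(C) \simeq \cL_X^S(C) \wedge_{\cL_W^S(C)} \cL_Y^S(C)$, and that is indeed the method used in \cite[Proposition~8.4]{bhlprz}.
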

\begin{proof}
  This follows from \cite[Proposition 8.4]{bhlprz} because $\cL_X^S(A
  \times B) \simeq \cL_X^S(A) \times \cL_X^S(B)$ as commutative $S$-algebras. 
\end{proof}

The following notion is investigated in \cite{mccm,mathew}. 
\begin{definition}
  Let $R \ra A \ra B $ be a sequence of cofibrations of commutative
  $S$-algebras with $R$ cofibrant. Then
  this sequence \emph{satisfies \'etale descent} if for all connected
  $X$ the canonical map
  \[ \cL_X^R(A) \wedge_A B \ra \cL_X^R(B)\]
  is an equivalence. 
\end{definition}  
If $R \ra A \ra B$ satisfies \'etale descent and if $X$ is not
connected, so for example 
$X = X_1 \sqcup X_2$ 
with $X_i$ connected for $i=1,2$, then the formula becomes 
\[ \cL^R_X(B) = \cL^R_{X_1 \sqcup X_2}(B) \simeq \cL^R_{X_1}(B)
  \wedge_R \cL_{X_2}^R(B) \simeq 
  \cL^R_{X_1}(A) \wedge_A B \wedge_R \cL_{X_2}^R(A) \wedge_A B.\]

The property of satisfying \'etale descent is closed under smashing
with a fixed commutative $S$-algebra: 
\begin{lemma} \label{lem:smashanded}
If $R \ra A \ra B$ satisfies \'etale descent and if $C$ is a cofibrant
commutative $R$-algebra, then $C \ra C \wedge_R A \ra C \wedge_R B$
satisfies \'etale descent. 
\end{lemma}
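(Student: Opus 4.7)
The plan is to reduce everything to the base-change formula of Lemma \ref{lem:basechange} and then invoke the \'etale descent hypothesis for $R \to A \to B$. Fix a connected pointed simplicial set $X$. We need to show that the canonical comparison map
\[ \cL^C_X(C \wedge_R A) \wedge_{C \wedge_R A} (C \wedge_R B) \longrightarrow \cL^C_X(C \wedge_R B) \]
is an equivalence.

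First I would apply Lemma \ref{lem:basechange} twice (to the diagrams $C \leftarrow R \to A$ and $C \leftarrow R \to B$, choosing cofibrant models as necessary) to obtain equivalences
\[ \cL^C_X(C \wedge_R A) \simeq C \wedge_R \cL^R_X(A) \quad \text{and} \quad \cL^C_X(C \wedge_R B) \simeq C \wedge_R \cL^R_X(B) \]
as augmented commutative $(C \wedge_R A)$- and $(C \wedge_R B)$-algebras, respectively. The next step is to simplify the base change of smash products by a standard rewriting:
\[ (C \wedge_R \cL^R_X(A)) \wedge_{C \wedge_R A} (C \wedge_R B) \simeq C \wedge_R \bigl(\cL^R_X(A) \wedge_A B\bigr), \]
which follows from the associativity of smash products and the identification $C \wedge_R B \simeq (C \wedge_R A) \wedge_A B$.

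With these identifications the source of our comparison map becomes $C \wedge_R (\cL^R_X(A) \wedge_A B)$ and the target becomes $C \wedge_R \cL^R_X(B)$. Since $X$ is connected and $R \to A \to B$ satisfies \'etale descent, the canonical map $\cL^R_X(A) \wedge_A B \to \cL^R_X(B)$ is an equivalence, and smashing with $C$ over $R$ preserves this equivalence.

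The main thing to check is that the chain of equivalences we have assembled is indeed compatible with the canonical map defining \'etale descent for $C \to C \wedge_R A \to C \wedge_R B$; in other words, that the identifications from Lemma \ref{lem:basechange} are natural in $A \to B$. This is immediate from the construction in the proof of Lemma \ref{lem:basechange}, which produces a simplicial isomorphism on the level of $p$-simplices that clearly commutes with multiplication and hence with maps induced by $A \to B$. This naturality is the only potentially delicate point, but once it is in hand the required equivalence follows at once.
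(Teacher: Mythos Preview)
Your argument is correct and follows essentially the same route as the paper: both proofs invoke Lemma~\ref{lem:basechange} to identify $\cL^C_X(C \wedge_R A)$ with $C \wedge_R \cL^R_X(A)$, rewrite the smash over $C \wedge_R A$ as $C \wedge_R (\cL^R_X(A) \wedge_A B)$ via an exchange of pushouts, and then apply the \'etale descent hypothesis for $R \to A \to B$. Your treatment is slightly more explicit about the naturality needed to match the canonical comparison map, which the paper leaves implicit.
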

\begin{proof}
We know from Lemma \ref{lem:basechange} that $\cL_X^C(C \wedge_R A)
\simeq C \wedge_R \cL_X^R(A)$. Therefore an exchange of pushouts
yields 
\begin{align*}
  \cL_X^C(C \wedge_R A) \wedge_{(C \wedge_R A)} (C \wedge_R B) & \simeq
                                                                 (C
                                                                 \wedge_R
                                                                 \cL^R_X(A))
                                                                 \wedge_{(C
                                                                 \wedge_R
                                                                 A)}
                                                                 (C
                                                                 \wedge_R
                                                                 B)
  \\
                                                               &
                                                                 \simeq
                                                                 (C
                                                                 \wedge_C
                                                                 C)
                                                                 \wedge_{R
                                                                 \wedge_R
                                                                 R}
                                                                 (\cL^R_X(A)
                                                                 \wedge_A
                                                                 B) \\  
  & \simeq C \wedge_R \cL_X^R(B) \simeq \cL_X^{C}(C \wedge_R B).   
\end{align*}  

\end{proof}  

In the case of \'etale descent we can extend stable maps and get maps
that are stable for connected $X$: 
\begin{proposition} \label{prop:etale}
  Let $R \ra A \ra B$ be a sequence of cofibrations of commutative
  $S$-algebras with $R$ cofibrant. If $R \ra A$ 
  is multiplicatively stable and if $R \ra A \ra B$ satisfies \'etale
  descent, then if $\Sigma X \simeq \Sigma 
  Y$ in $\ssets_*$ for connected $X$ and $Y$ we can conclude that there is a weak
  equivalence of augmented commutative 
  $B$-algebras 
  \[ \cL_X^R(B) \simeq \cL_Y^R(B).\]
  \end{proposition}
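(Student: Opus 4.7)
The plan is to combine the two hypotheses in the most direct way: \'etale descent lets us transport questions about $\cL_{-}^R(B)$ back to $\cL_{-}^R(A)$, where multiplicative stability provides the equivalence, and then we base change up to $B$.

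First, since $X$ is connected, the \'etale descent hypothesis gives a weak equivalence of augmented commutative $B$-algebras
\[ \cL_X^R(A) \wedge_A B \simeq \cL_X^R(B), \]
and likewise for $Y$. Here the augmentation on the left is induced by the augmentation $\cL_X^R(A) \to A$ smashed over $A$ with $B$, yielding the target $A \wedge_A B \simeq B$.

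Next, the multiplicative stability of $R \ra A$ applied to the equivalence $\Sigma X \simeq \Sigma Y$ produces a weak equivalence
\[ \cL_X^R(A) \simeq \cL_Y^R(A) \]
of augmented commutative $A$-algebras. Smashing over $A$ with $B$ is a homotopically meaningful operation (after choosing cofibrant replacements if needed, which we may do since all our algebras were assumed cofibrant in the appropriate sense), and it preserves the augmentation structure: we obtain an equivalence
\[ \cL_X^R(A) \wedge_A B \simeq \cL_Y^R(A) \wedge_A B \]
of augmented commutative $B$-algebras. Concatenating with the \'etale descent equivalences from the first step (running the second one in reverse) yields $\cL_X^R(B) \simeq \cL_Y^R(B)$ as augmented commutative $B$-algebras, which is the desired conclusion.

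There is no real obstacle here beyond bookkeeping; the only subtlety is that \'etale descent is available only because both $X$ and $Y$ are connected, which is exactly the hypothesis of the proposition, and that the augmentations on both sides must be carefully tracked through the base change. The argument does not use any special feature of $R \ra A$ beyond multiplicative stability, so the proof is essentially a one-line combination of Lemma~\ref{lem:basechange}-style base change reasoning with the definitions of \'etale descent and multiplicative stability.
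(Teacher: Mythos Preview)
Your proof is correct and follows exactly the same approach as the paper: use \'etale descent on each side (valid because $X$ and $Y$ are connected), use multiplicative stability of $R\ra A$ to get $\cL_X^R(A)\simeq\cL_Y^R(A)$ as augmented commutative $A$-algebras, then smash over $A$ with $B$ and concatenate. The paper's proof is the one-line chain $\cL_X^R(B)\simeq\cL_X^R(A)\wedge_A B\simeq\cL_Y^R(A)\wedge_A B\simeq\cL_Y^R(B)$, which is precisely what you wrote out in more detail.
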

  \begin{proof}
    As $X$ and $Y$ are connected and as $R \ra A$ is stable, the
    equivalence $\Sigma X \simeq \Sigma Y$ in $\ssets_*$ implies 
    that $\cL_X^R(A) \simeq \cL_Y^R(A)$ and with \'etale descent we
    can upgrade this to 
    \[ \cL_X^R(B) \simeq \cL^R_X(A) \wedge_A B \simeq \cL^R_Y(A)
      \wedge_A B \simeq \cL_Y^R(B).\] 
  \end{proof}

\begin{remark} 
  We know that $H\Q \ra H\Q[t]$ is stable and as $\Q[t]/t^2$ and
  $\Q[t]$ are commutative augmented $\Q$-algebras, we also know that
  $H\Q[t]/t^2 \ra H\Q$ and $H\Q[t] \ra H\Q$ are stable, but since $H\Q
  \ra H\Q[t]/t^2$ and $H\Q \ra H\Q[t]/t^2 \ra H\Q$ are not stable, we
  won't have general descent results. For instance in the diagram
  \[ \xymatrix{
 & \Q[t]/t^2 \ar[d]^\varepsilon \\ 
\Q \ar[ur]^\eta \ar@{=}[r]& \Q
    }\]
  the maps $H(\varepsilon)$ and the identity on $H\Q$ are (even
  multiplicatively) stable, but $H\eta$ isn't. 
  \end{remark}
  
  For morphisms that are faithful Galois extensions and satisfy
  \'etale descent, we obtain a descent result for multiplicative stability: 
\begin{theorem} \label{thm:etaledescent}
  Let $A \ra B$ be a faithful Galois extension with finite Galois
  group $G$ and assume that $A \ra B$ satisfies 
  \'etale descent. Assume that $\Sigma X \simeq \Sigma Y$ for
  connected $X$ and $Y$ implies that there is a $G$-equivariant equivalence $\cL_X^S(B) \simeq 
  \cL_Y^S(B)$ as commutative $B$-algebras. Then also $\cL_X^S(A) \simeq
  \cL_Y^S(A)$ as commutative $A$-algebras.  
\end{theorem}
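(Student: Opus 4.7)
The plan is to combine faithful Galois descent, in the form developed by Rognes \cite{rognes}, with the étale descent hypothesis in order to pull back the assumed $G$-equivariant equivalence over $B$ to an equivalence over $A$. The crucial input from Galois descent is that for a faithful Galois extension $A \to B$ with finite group $G$, the canonical map $A \to B^{hG}$ is an equivalence and, more generally, for every commutative $A$-algebra $M$ the natural map $M \to (M \wedge_A B)^{hG}$ is an equivalence.

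First I apply the étale descent hypothesis to both $X$ and $Y$ (both of which are connected). This yields equivalences of commutative $B$-algebras
\[ \cL_X^S(A) \wedge_A B \simeq \cL_X^S(B) \quad \text{and} \quad \cL_Y^S(A) \wedge_A B \simeq \cL_Y^S(B). \]
Because these equivalences come from the natural descent maps, they are $G$-equivariant: on the left $G$ acts on the second smash factor via its Galois action on $B$, while on the right $G$ acts through the functoriality of $\cL^S_{(-)}(B)$ applied to the Galois action on $B$.

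Next, applying faithful Galois descent to the commutative $A$-algebras $\cL_X^S(A)$ and $\cL_Y^S(A)$ gives
\[ \cL_X^S(A) \simeq \bigl(\cL_X^S(A) \wedge_A B\bigr)^{hG} \simeq \bigl(\cL_X^S(B)\bigr)^{hG}, \]
and analogously for $Y$. By the hypothesis, there is a $G$-equivariant equivalence $\cL_X^S(B) \simeq \cL_Y^S(B)$ of commutative $B$-algebras. Taking homotopy $G$-fixed points, which is functorial for equivariant maps of commutative ring spectra under a finite group, produces an equivalence $\bigl(\cL_X^S(B)\bigr)^{hG} \simeq \bigl(\cL_Y^S(B)\bigr)^{hG}$ of commutative $B^{hG}$-algebras, and $B^{hG} \simeq A$ promotes this to an equivalence of commutative $A$-algebras. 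Concatenating the three equivalences yields $\cL_X^S(A) \simeq \cL_Y^S(A)$ as commutative $A$-algebras.

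The main obstacle I anticipate is making precise the $G$-equivariance of the étale descent equivalence and verifying that the $G$-action induced on $\cL_X^S(A) \wedge_A B$ via the second factor is exactly the one for which Rognes's descent statement identifies the homotopy fixed points with $\cL_X^S(A)$. A secondary subtlety is that descent is most often stated at the level of modules; upgrading to commutative algebras requires either appealing to the multiplicative version of the descent theorem or working in a symmetric monoidal $\infty$-categorical framework, both of which are available in the present setting since $\cL_X^S(A)$ is by construction a commutative $A$-algebra on which the $G$-action (through $B$) respects the multiplicative structure.
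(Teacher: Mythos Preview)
Your proposal is correct and follows essentially the same approach as the paper: both arguments identify $\cL_X^S(A)$ with $\cL_X^S(B)^{hG}$ via étale descent and faithful Galois descent, then transport the assumed $G$-equivariant equivalence $\cL_X^S(B)\simeq\cL_Y^S(B)$ through homotopy fixed points. The only cosmetic difference is that the paper first invokes Rognes's base-change result to exhibit $\cL_X^S(A)\to B\wedge_A\cL_X^S(A)$ as a faithful $G$-Galois extension (and then takes fixed points), whereas you appeal directly to the descent statement $M\simeq(M\wedge_A B)^{hG}$ for commutative $A$-algebras $M$; these are equivalent formulations of the same input.
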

\begin{proof}
  The base-change result for Galois extensions \cite[Lemma
  7.1.1]{rognes} applied to the diagram 
  \[\xymatrix{
      A \ar[r] \ar[d] & B \ar[d] \\
      \cL_X^S(A) \ar[r] & B \wedge_A \cL_X^S(A)   }\]
  yields that $\cL_X^S(A) \ra B \wedge_A \cL_X^S(A)$ is a $G$-Galois
  extension and by \'etale descent there is an 
  equivalence of augmented commutative $B$-algebras 
$B \wedge_A \cL_X^S(A) \simeq \cL_X^S(B)$ which is $G$-equivariant where
on the left hand side the only non-trivial $G$-action is on the
$B$-factor and on the right hand side $G$-acts on $\cL_X^S(B)$ by
naturality in $B$.
Hence we get a chain of $G$-equivariant equivalences of commutative $B$-algebras
\[ B \wedge_A \cL_X^S(A) \simeq \cL_X^S(B) \simeq \cL_Y^S(B) \simeq B \wedge_A \cL_Y^S(A).\] 
Taking $G$-homotopy fixed points then gives an
equivalence of augmented commutative $A$-algebras 
  \[ \cL_X^S(A) \simeq \cL_X^S(B)^{hG} \simeq \cL_Y^S(B)^{hG} \simeq \cL_Y^S(A).\] 
  
\end{proof}
There exist several definitions of smoothness in the literature (see
for instance \cite{rognes,mccm}) 
using $\THH$-\'etaleness and $\mathsf{TAQ}$-\'etaleness. Using the local behaviour of smooth commutative $k$-algebras
  \cite[Appendix E, Proposition E.2 (d)]{loday} as a template we
  suggest the following 
  variant.  
\begin{definition}
  We call a map of cofibrant $S$-algebras $\varphi \colon R \ra A$
  \emph{really smooth} if it can be 
  factored as $\xymatrix@1{R \ar[r]^(0.4){i_R} & \mathbb{P}_R(X) \ar[r]^(0.6)f &
    A}$ where $i_R$ is the canonical 
  inclusion, $X$ is an $R$-module, and 
  $\xymatrix@1{R \ar[r]^(0.4){i_R} & \mathbb{P}_R(X) \ar[r]^(0.6)f &
    A}$ satisfies \'etale descent.  
\end{definition}

Combining Proposition \ref{prop:etale} and Corollary \ref{cor:pstable} we get:
\begin{proposition} \label{prop:reallysmooth}
If $R \ra A$ is really smooth then $\Sigma X \simeq \Sigma Y$ for
connected $X$ and $Y$ implies
\[ \cL_X^R(A) \simeq \cL_Y^R(A)\]
as commutative $R$-algebras. 
\end{proposition}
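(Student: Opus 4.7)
The plan is to chain together Corollary \ref{cor:pstable} and Proposition \ref{prop:etale} via the factorization supplied by the definition of really smooth. First, unpack the hypothesis: by definition, fix a factorization $R \xrightarrow{i_R} \mathbb{P}_R(M) \xrightarrow{f} A$, where $M$ is the $R$-module (I rename it from $X$ in the definition to avoid a clash with the simplicial set $X$ appearing in the statement), $i_R$ is the canonical inclusion, and the composite $R \to \mathbb{P}_R(M) \to A$ satisfies \'etale descent.

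Next, I apply Corollary \ref{cor:pstable} to the free commutative $R$-algebra $\mathbb{P}_R(M)$: any equivalence $\Sigma X \simeq \Sigma Y$ in $\ssets_*$ yields
\[ \cL_X^R(\mathbb{P}_R(M)) \simeq \cL_Y^R(\mathbb{P}_R(M)) \]
as commutative $\mathbb{P}_R(M)$-algebras, compatibly with the canonical augmentation coming from the basepoint of $X$ (since the equivalence is induced by the equivalence of based spectra $\Sigma^\infty_+ X \simeq \Sigma^\infty_+ Y$, which respects the retraction to the sphere spectrum). Hence $R \to \mathbb{P}_R(M)$ is multiplicatively stable.

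Finally, I feed this into Proposition \ref{prop:etale} applied to the sequence $R \to \mathbb{P}_R(M) \to A$: multiplicative stability of the first arrow together with \'etale descent for the composite gives, for connected $X$ and $Y$ with $\Sigma X \simeq \Sigma Y$, an equivalence $\cL_X^R(A) \simeq \cL_Y^R(A)$ of augmented commutative $A$-algebras, which in particular is the asserted equivalence of commutative $R$-algebras. There is essentially no hard step here, since both ingredients are already in place; the only point requiring care is the bookkeeping of augmentations and algebra structure, so that the output of Corollary \ref{cor:pstable} genuinely supplies multiplicative stability as demanded by the hypotheses of Proposition \ref{prop:etale}.
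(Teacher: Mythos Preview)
Your proof is correct and follows exactly the paper's approach: the paper's own proof is the single sentence ``Combining Proposition \ref{prop:etale} and Corollary \ref{cor:pstable} we get:'', and you have faithfully spelled out how those two ingredients fit together via the really smooth factorization. Your remark that the equivalence from Corollary \ref{cor:pstable} can be upgraded to one of commutative $\mathbb{P}_R(M)$-algebras (using that $\Sigma^\infty_+ X \simeq \Sigma^\infty_+ Y$ may be taken under $S$) is the right point to flag, since Proposition \ref{prop:etale} needs the $\mathbb{P}_R(M)$-algebra structure to form $(-)\wedge_{\mathbb{P}_R(M)} A$.
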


The notion of being really smooth is transitive and closed under base change. 
\begin{lemma}
  \begin{itemize}
  \item[]
\item
  If $\varphi \colon R \ra A$ and $\psi \colon A \ra B$ are really
smooth, then so is $\psi \circ \varphi \colon R \ra B$.
\item
If $\varphi \colon R \ra A$ is really smooth and if $C$ is a cofibrant
commutative $R$-algebra, then $C \ra C \wedge_A B$ is really smooth. 
\end{itemize}
\end{lemma}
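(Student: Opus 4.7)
For the first bullet, write the given really-smooth factorizations as $R\xrightarrow{i_R}\mathbb{P}_R(X)\xrightarrow{f}A$ and $A\xrightarrow{i_A}\mathbb{P}_A(Y)\xrightarrow{g}B$, each satisfying \'etale descent. The key preliminary step is to cofibrantly replace $Y$ as an $A$-module so that $Y\simeq A\wedge_R\widetilde{Y}$ for some $R$-module $\widetilde{Y}$. This is legitimate because any cell $A$-module is built from cells of the form $A\wedge_R(R\wedge_S D^n)$, and both the base-change identification $\mathbb{P}_A(A\wedge_R\widetilde{Y})\simeq A\wedge_R\mathbb{P}_R(\widetilde{Y})$ and the \'etale descent hypothesis for $A\to\mathbb{P}_A(Y)\to B$ are invariant under weak equivalence of $Y$. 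I then propose the factorization
\[ R \to \mathbb{P}_R(X\vee\widetilde{Y}) \simeq \mathbb{P}_R(X)\wedge_R\mathbb{P}_R(\widetilde{Y}) \xrightarrow{f\wedge\mathrm{id}} A\wedge_R\mathbb{P}_R(\widetilde{Y}) \simeq \mathbb{P}_A(Y) \xrightarrow{g} B. \]

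The verification of \'etale descent for this factorization is where most of the work lies. For connected $W$, I would unwind $\cL_W^R(\mathbb{P}_R(X\vee\widetilde{Y}))\wedge_{\mathbb{P}_R(X\vee\widetilde{Y})}B$ using Lemma~\ref{lem:pushouts} together with the identity $\cL_W^R(\mathbb{P}_R(M))\simeq\mathbb{P}_R(W_+\wedge M)$. This splits the expression into an $X$-piece, to which \'etale descent for $R\to\mathbb{P}_R(X)\to A$ applies to replace $\mathbb{P}_R(W_+\wedge X)\wedge_{\mathbb{P}_R(X)}B$ by $\cL_W^R(A)\wedge_A B$, and a $\widetilde{Y}$-piece. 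For the latter, Lemma~\ref{lem:basechange} identifies $A\wedge_R\mathbb{P}_R(W_+\wedge\widetilde{Y})\simeq\cL_W^A(\mathbb{P}_A(A\wedge_R\widetilde{Y}))\simeq\cL_W^A(\mathbb{P}_A(Y))$, and \'etale descent for $A\to\mathbb{P}_A(Y)\to B$ collapses the result to $\cL_W^A(B)$. Assembling, the left-hand side becomes $\cL_W^R(A)\wedge_A\cL_W^A(B)$, which coincides with $\cL_W^R(B)$ by the base-change formula $\cL_W^A(B)\simeq\cL_W^R(B)\wedge_{\cL_W^R(A)}A$ of \cite[Remark~3.3]{hhlrz}.

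The main obstacle is justifying the initial cofibrant replacement $Y\simeq A\wedge_R\widetilde{Y}$ and ensuring it is compatible with both the base-change identifications and the \'etale descent hypothesis for the second factorization; once this is in place, the remainder reduces to tracking tensor products through the lemmas already established. Without such a replacement, the candidate factorization through $\mathbb{P}_R(X\vee Y_R)$ (with $Y_R$ the restriction of $Y$) would leave a residual smash product over $\mathbb{P}_A(A\wedge_R Y_R)\to\mathbb{P}_A(Y)$ that does not collapse.

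\emph{Second bullet.} Interpreting the printed ``$C\wedge_A B$'' as $C\wedge_R A$ (evidently a typo, since no $B$ is introduced in this bullet), the plan is direct. Smash the really-smooth factorization of $\varphi$ with $C$ over $R$ to obtain
\[ C \to C\wedge_R\mathbb{P}_R(X) \simeq \mathbb{P}_C(C\wedge_R X) \to C\wedge_R A, \]
where the middle identification is the standard base change of free commutative algebras. Lemma~\ref{lem:smashanded}, applied to $R\to\mathbb{P}_R(X)\to A$ and the cofibrant commutative $R$-algebra $C$, then immediately transfers \'etale descent to the base-changed sequence, exhibiting $C\to C\wedge_R A$ as really smooth with generating $C$-module $C\wedge_R X$.
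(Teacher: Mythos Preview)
Your factorization strategy for transitivity is the same as the paper's, and your treatment of the second bullet (reading $C\wedge_A B$ as $C\wedge_R A$ and invoking Lemma~\ref{lem:smashanded}) matches the paper exactly. For the \'etale-descent verification in the first bullet you compute directly with the formula $\cL_W^R(\mathbb{P}_R(M))\simeq\mathbb{P}_R(W_+\wedge M)$, while the paper instead isolates two general sublemmas: (1) if $R\to D\to A$ satisfies \'etale descent then so does $R\to D\wedge_R C\to A\wedge_R C$, and (2) \'etale descent composes. These are different packagings of the same manipulation; the paper's version yields reusable statements, yours is more hands-on but equally valid.

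There is, however, a genuine gap in your preliminary step. You propose to replace $Y$ by an equivalent $A$-module of the form $A\wedge_R\widetilde{Y}$, arguing that cell $A$-modules are ``built from cells of the form $A\wedge_R(R\wedge_S D^n)$''. The generating cells are indeed extended, but the attaching maps are arbitrary $A$-module maps and need not descend to $R$; consequently extension of scalars $A\wedge_R(-)\colon\mathcal{M}_R\to\mathcal{M}_A$ is not essentially surjective. A concrete obstruction: with $R=H\Q$, $A=H(\Q[\epsilon]/\epsilon^2)$, and $Y=H\Q$ (trivial $\epsilon$-action), any $A\wedge_R M$ has $\dim_{\Q}\pi_n$ even in every degree, so $Y$ is not in the essential image. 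Without this replacement your own final paragraph explains exactly why the argument stalls: one is left with $\mathbb{P}_A(A\wedge_R Y_R)\to\mathbb{P}_A(Y)$, and the pushout $(W_+\wedge A\wedge_R Y_R)\cup_{A\wedge_R Y_R}Y$ is $Y\vee(W\wedge A\wedge_R Y_R)$ rather than $Y\vee(W\wedge Y)$. It is worth noting that the paper's proof writes the identification $A\wedge_R\mathbb{P}_R(Y)\simeq\mathbb{P}_A(Y)$ without comment, so this issue is present there as well; you have correctly located the difficulty, but the proposed resolution does not close it.
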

\begin{proof}
To prove the transitivity,  we take the two given factorizations $\varphi = \xymatrix@1{R
    \ar[r]^(0.4){i_R} & \mathbb{P}_R(X) \ar[r]^(0.6)f & A}$ and $\psi
  = \xymatrix@1{A \ar[r]^(0.4){i_A} & \mathbb{P}_A(Y) \ar[r]^(0.6)g &
    B}$ and combine them to give
  \[ \xymatrix{R \ar[r]^(0.2){i_R} & \mathbb{P}_R(X \vee Y) \simeq
      \mathbb{P}_R(X) \wedge_R \mathbb{P}_R(Y) 
      \ar[rr]^(0.6){f \wedge_R \mathrm{id}} & &  A \wedge_R
      \mathbb{P}_R(Y) \simeq \mathbb{P}_A(Y) \ar[r]^(0.75)g & B  } \]  
  So we have to show that for general maps $f \colon D \ra A, g \colon
  A \ra B, h \colon B \ra C$ of commutative $R$-algebras: 
\begin{enumerate}
\item
  If $f$ satisfies \'etale descent, then so does $f \wedge_R
  \mathrm{id}_C$ for every commutative $R$-algebra $C$. 
\item
  If $g$ and $h$ satisfy \'etale descent, then so does $h \circ g$. 
\end{enumerate}
For (1) let $X$ be connected. As $\cL_X^R(-)$ commutes with pushouts
(see Lemma \ref{lem:pushouts}), we get that $\cL_X^R(A \wedge_R C)
\simeq \cL_X^R(A) \wedge_R \cL^R_X(C)$. As $f$ satisfies \'etale
descent, 
\[ \cL_X^R(A) \wedge_R \cL^R_X(C) \simeq A \wedge_D \cL_X^R(D)
  \wedge_R \cL_X^R(C) \simeq A \wedge_D \cL_X^R(D \wedge_R C) \]
and this in turn is equivalent to $(A \wedge_R C) \wedge_{(D \wedge_R
  C)} \cL_X^R(D \wedge_R C)$. 

The proof of (2) is straightforward because
\begin{align*}
  C \wedge_A \cL_X^R(A) & \simeq C \wedge_B (B \wedge_A \cL_X^R(A)) \\
                        & \simeq C \wedge_B \cL_X^R(B) \\
                        & \simeq \cL_X^R(C).
\end{align*}
For the claim about base change consider the diagram
\[ \xymatrix{
R \ar[r]^{i_R} \ar[d]_\eta & \mathbb{P}_R(X) \ar[d]^f \\ 
C \ar[r] & C \wedge_R \mathbb{P}_R(X).  } \]                      
Adjunction gives that $C \wedge_R \mathbb{P}_R(X) \simeq
\mathbb{P}_C(C \wedge_R X)$. As $R \ra \mathbb{P}_R(X) \ra C$
satisfies \'etale descent we 
obtain with Lemma \ref{lem:smashanded} that $C \ra C \wedge_R
\mathbb{P}_R(X) \ra C \wedge_R A$ satisfies \'etale descent.                   
\end{proof}

\section{Truncated polynomial algebras}
  
Note that we know that the square zero extensions $H\F_p \ra H\F_p
\vee \Sigma^3 H\F_p$ (Example \ref{ex:thh2fp}) and $H\F_p \ra H\F_p
\vee \Sigma H\F_p$ (Corollary \ref{cor:squarezero}) are
multiplicatively stable. However, if place the module $H\F_p$ in 
degree zero, then the following result shows that the square zero extension
$H\F_p \ra H\F_p \vee H\F_p \simeq H\F_p[t]/t^2$ is \emph{not} multiplicatively
stable for odd primes $p$. The proof is a direct adaptation of \cite[\S 3.8]{dt}. 

\begin{theorem} \label{thm:hfpsquarezero}
  Let $p$ be an odd prime. Then $(H\F_p, H\F_p[t]/t^2, H\F_p)$ is not stable. 
\end{theorem}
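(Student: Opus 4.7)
The plan is to imitate the Dundas--Tenti argument of \cite[\S 3.8]{dt} for the rational case: exhibit two pointed simplicial sets $X,Y$ with $\Sigma X \simeq \Sigma Y$ but with non-isomorphic homotopy groups of their Loday constructions. The natural candidates are $X = S^1 \vee S^1 \vee S^2$ and $Y = S^1 \times S^1$ (pointed in the usual way). Since $\Sigma(S^1 \times S^1) \simeq S^2 \vee S^2 \vee S^3 \simeq \Sigma(S^1 \vee S^1 \vee S^2)$ in $\ssets_*$, these suspend to the same homotopy type. What I have to do is show that
\[ \pi_* \cL^{H\F_p}_X(H\F_p[t]/t^2; H\F_p) \not\cong \pi_*\cL^{H\F_p}_Y(H\F_p[t]/t^2; H\F_p). \]

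For $X$, I would use the wedge splitting of the Loday construction with coefficients, $\cL^{R}_{X_1 \vee X_2}(A;C) \simeq \cL^{R}_{X_1}(A;C) \wedge_C \cL^{R}_{X_2}(A;C)$, to reduce to computing
\[ \THH^{H\F_p}(H\F_p[t]/t^2; H\F_p)^{\wedge_{H\F_p} 2}\wedge_{H\F_p}\THH^{[2],H\F_p}(H\F_p[t]/t^2; H\F_p). \]
The factor $\pi_*\THH^{H\F_p}(H\F_p[t]/t^2; H\F_p) = HH^{\F_p}_*(\F_p[t]/t^2;\F_p)$ is easily computed from the standard periodic minimal resolution of $A=\F_p[t]/t^2$ as an $A^e$-module, whose differentials $t\otimes 1\mp 1\otimes t$ all become zero after tensoring with $\F_p$ over $A^e$; so that group is $\F_p$ in every nonnegative degree. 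The higher-order factor $\pi_*\THH^{[2],H\F_p}(H\F_p[t]/t^2;H\F_p)$ can be handled with Pirashvili's description of higher Hochschild homology of a square-zero extension (or, equivalently, by iterating the construction and running a Bökstedt-type spectral sequence).

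For $Y = S^1 \times S^1$, I would use the identification $\cL^{H\F_p}_{S^1\times S^1}(A) \simeq \THH^{H\F_p}(\THH^{H\F_p}(A))$, then smash with $H\F_p$ over $A$ to introduce the coefficients, and read off $\pi_*$ via the Bökstedt spectral sequence
\[ E^2_{*,*} = HH^{\F_p}_{*,*}\bigl(\pi_*\THH^{H\F_p}(H\F_p[t]/t^2);\F_p\bigr) \Longrightarrow \pi_*\cL^{H\F_p}_Y(H\F_p[t]/t^2; H\F_p). \]
The final step is to compare Poincaré series degree by degree with that of the wedge computation. Exactly as in DT's rational calculation, the interaction between the two circle factors of the torus introduces a nontrivial differential (or multiplicative relation) that kills classes which, for the wedge, survive as independent generators; this produces a discrepancy already in low homological degrees.

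The main obstacle is ensuring that the discrepancy observed rationally by Dundas--Tenti persists modulo an odd prime, i.e.\ that no accidental cancellation or extra collapse in characteristic $p$ equalises the two sides. The hypothesis $p$ odd is used precisely to guarantee that graded-commutativity imposes the same splitting into an exterior algebra on an odd-degree class tensored with a polynomial algebra on an even-degree class as in the rational setting, so that the Poincaré series match those of the rational case and the inequality found by Dundas--Tenti carries over verbatim.
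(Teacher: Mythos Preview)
Your overall strategy matches the paper's: compare $X=S^1\vee S^1\vee S^2$ with $Y=S^1\times S^1$ and find a mismatch in $\pi_2$. Your wedge computation is fine and is what the paper does, invoking the explicit formulas from \cite{bhlprz} to obtain four $\F_p$-generators in degree~$2$.

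The torus side, however, is not actually argued in your proposal. The paper does \emph{not} run a B\"okstedt spectral sequence on $\THH(\THH(A))$; it works directly with the bicomplex $\cL^{\F_p}_{[n]\times[m]}(\F_p[t]/t^2;\F_p)\cong(\F_p[t]/t^2)^{\otimes((n+1)(m+1)-1)}$ and enumerates cycles and boundaries in total degree~$2$ by hand. This yields at most three generators, giving the discrepancy. Your appeal to ``exactly as in DT'' is not a substitute: Dundas--Tenti also compute the bicomplex explicitly, and the crucial step is producing specific boundaries such as $\tfrac{1}{2}\begin{pmatrix}1&1\\1&t\\1&t\end{pmatrix}$.

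This is also where your account of the role of $p$ odd goes wrong. The hypothesis is not used to force a graded-commutative exterior/polynomial splitting; it is used because one must divide by~$2$ to exhibit certain degree-$2$ cycles as boundaries. At $p=2$ those cycles survive, the torus count rises to four, and indeed $\F_2[t]/t^2$ is a commutative Hopf algebra over $\F_2$, so $(H\F_2,H\F_2[t]/t^2,H\F_2)$ \emph{is} stable. So the rational argument does not carry over ``verbatim'' to all primes, and the precise mechanism by which it fails at $p=2$ and survives for $p$ odd is exactly what your sketch is missing.
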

\begin{corollary} \label{cor:absolute}
The commutative   $H\F_p$-algebra $H\F_p[t]/t^ 2$ is neither 
multiplicatively stable nor $H\F_p[t]/t^ 2$-linearly stable over $H\F_p$. 
\end{corollary}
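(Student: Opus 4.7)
The strategy is to deduce both non-stability statements from Theorem \ref{thm:hfpsquarezero} by contraposition, using the hierarchy of stability notions recorded in the Remark after the definition of $A$-linear stability.

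First I would handle the $H\F_p[t]/t^2$-linear case. Suppose, toward a contradiction, that $H\F_p \to H\F_p[t]/t^2$ is $H\F_p[t]/t^2$-linearly stable. The augmentation $\varepsilon \colon H\F_p[t]/t^2 \to H\F_p$ sending $t$ to $0$ makes $H\F_p$ into a (cofibrant model of a) commutative $H\F_p[t]/t^2$-algebra, so the triple $(H\F_p, H\F_p[t]/t^2, H\F_p)$ is defined. By the Remark following the definition of $A$-linear stability, the identification
\[ \cL_X^{H\F_p}(H\F_p[t]/t^2;\, H\F_p) \simeq \cL_X^{H\F_p}(H\F_p[t]/t^2) \wedge_{H\F_p[t]/t^2} H\F_p \]
shows that $H\F_p[t]/t^2$-linear stability of $H\F_p \to H\F_p[t]/t^2$ forces $(H\F_p, H\F_p[t]/t^2, H\F_p)$ to be stable. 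This directly contradicts Theorem \ref{thm:hfpsquarezero}.

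For the multiplicative case, I would observe that multiplicative stability is strictly stronger than $A$-linear stability: if $\Sigma X \simeq \Sigma Y$ induces an equivalence $\cL_X^{H\F_p}(H\F_p[t]/t^2) \simeq \cL_Y^{H\F_p}(H\F_p[t]/t^2)$ of augmented commutative $H\F_p[t]/t^2$-algebra spectra, then in particular such an equivalence exists as $H\F_p[t]/t^2$-modules. Hence multiplicative stability of $H\F_p \to H\F_p[t]/t^2$ would imply its $H\F_p[t]/t^2$-linear stability, which by the previous paragraph is impossible.

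There is no real obstacle here beyond packaging the implications correctly; the content is entirely in Theorem \ref{thm:hfpsquarezero}, and the corollary just records that the failure of stability for the triple propagates down to the absolute notions over $H\F_p$. The only point that warrants care is the cofibrancy of $H\F_p$ as a commutative $H\F_p[t]/t^2$-algebra via $\varepsilon$, which is handled in the standard way by replacing $\varepsilon$ by a cofibration without changing the homotopy type of $\cL_X^{H\F_p}(H\F_p[t]/t^2;H\F_p)$.
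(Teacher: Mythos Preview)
Your proposal is correct and follows the same route as the paper: both deduce the corollary from Theorem~\ref{thm:hfpsquarezero} via the Remark that $A$-linear stability of $R\to A$ implies stability of $(R,A,B)$ for any $B$. The paper compresses this into a single sentence, while you spell out the two implications (multiplicative $\Rightarrow$ $A$-linear $\Rightarrow$ triple-stable) and note the cofibrancy replacement for $\varepsilon$, but the argument is the same.
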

\begin{proof}
  If it were, then this would imply that $(H\F_p, H\F_p[t]/t^2, H\F_p)$ is 
  stable. 
\end{proof}
\begin{remark}
  In \cite[Theorem 4.18]{hklrz} we extend Theorem \ref{thm:hfpsquarezero} to $\F_p[t]/t^n$ for
  $2 \leq n < p$. 
\end{remark}
\begin{proof}[Proof of Theorem \ref{thm:hfpsquarezero}]
We know that
\[ \pi_*\cL_{S^1 \vee S^1 \vee S^2}^{H\F_p}(H\F_p[t]/t^2; H\F_p) \cong
    \pi_*\cL_{S^1}^{H\F_p}(H\F_p[t]/t^2; H\F_p)^{\otimes_{\F_p} 2}
\otimes_{\F_p}
    \pi_*\cL_{S^2}^{H\F_p}(H\F_p[t]/t^2; H\F_p)\] 
and by \cite{bhlprz} we know what the tensor factors are: 
  \[\pi_*\cL_{S^1}^{H\F_p}(H\F_p[t]/t^2; H\F_p) \cong
    \HH_*^{\F_p}(\F_p[t]/t^2; \F_p) \cong 
    \Lambda_{\F_p}(\varepsilon t) \otimes_{\F_p} \Gamma_{\F_p}(\varphi^0 t)\] 
and
  \[ \pi_*\cL_{S^2}^{H\F_p}(H\F_p[t]/t^2; H\F_p) \cong
    \HH_*^{[2],\F_p}(\F_p[t]/t^2; \F_p) \cong \Gamma_{\F_p}(\varrho^0
    \varepsilon t) \otimes \bigotimes_{k} (\Lambda_{\F_p}(\varepsilon
    \varphi^k t) \otimes \Gamma_{\F_p}(\varphi^0\varphi^k t)).\] 
Torus homology is the total complex of the bicomplex for 
$\cL_{S^1 \times S^1}^{\F_p}(\F_p[t]/t^2; \F_p)$ as in \cite{dt}.
In the bicomplex in bidegree $(n,m)$ we have  the term
  \[ \cL_{[n] \times [m]}^{\F_p}(\F_p[t]/t^2; \F_p) \cong \F_p \otimes_{\F_p}
    (\F_p[t]/t^2)^{(n+1)(m+1)-1} \cong (\F_p[t]/t^2)^{(n+1)(m+1)-1}.\] 
In total degree one we have contributions from $(0,1)$ and $(1,0)$
that we call $y_1^v$ and $y_1^h$ 
as in \cite[\S 3.8]{dt}. Everything is a cycle here and these elements
correspond to $\begin{matrix} 1 \\ 
  \otimes \\
  t\end{matrix}$ and $1 \otimes t$. 

From now on we suppress the tensor signs from the notation and we
denote the generators by matrices.  
In total degree two there are three possibilities $(0,2)$, $(1,1)$ and
$(2,0)$. There are the classes $y_2^v$ in bidegree $(0,2)$, and $y_2^h$
in bidegree $(2,0)$ corresponding 
to the standard Hochschild generators $\begin{pmatrix} 1 \\ t \\
  t\end{pmatrix}$ and 
$\begin{pmatrix} 1 &  t & t \end{pmatrix}$. 

In bidegree $(1,1)$ there are the following possibilities for
non-degenerate cycles:  
\[ \begin{pmatrix} 1 & t \\ t & t \end{pmatrix}, 
\begin{pmatrix} 1 & t \\ t & 1 \end{pmatrix}, 
\begin{pmatrix} 1 & t \\ 1 & t \end{pmatrix}, 
\begin{pmatrix} 1 & 1 \\ t & t \end{pmatrix}, \text{ and } 
\begin{pmatrix} 1 & 1 \\ 1 & t \end{pmatrix}.  \] 
As we are working over $\F_p$ for an odd prime $p$, $2$ is invertible. 
The boundary of $\frac{1}{2}\begin{pmatrix} 1 & 1 \\ 1 & t \\ 1 &
  t\end{pmatrix}$ is  
$\begin{pmatrix} 1 & t \\ 1 & t \end{pmatrix}$, the boundary of 
$\frac{1}{2}\begin{pmatrix} 1 & 1 & 1 \\ 1 & t & t\end{pmatrix}$ is 
$\begin{pmatrix} 1 & 1 \\ t & t \end{pmatrix}$. 
Finally, we identify $\begin{pmatrix} 1 & t \\ t & t \end{pmatrix}$ as
the boundary of $\begin{pmatrix} 1 & 1 \\1 & t \\ t &
  t \end{pmatrix}$.   

\noindent
The element $\begin{pmatrix} 1 & 1 \\ t & 1 \\ 1 & t \end{pmatrix}$
ensures that  
$\begin{pmatrix} 1 & t \\ t & 1 \end{pmatrix}$ is homologous to 
$\begin{pmatrix} 1 & 1 \\ t & t \end{pmatrix}$, so we are left with
the generator in bidegree  $(1,1)$ given by $\begin{pmatrix} 1 & 1 \\ 1
  & t \end{pmatrix}$.  

So we get (at most) a $3$-dimensional vector space in total degree $2$. 

\medskip
In $\pi_2\cL^{H\F_p}_{S^1 \vee S^1 \vee S^2}(H\F_p[t]/t^2; H\F_p)$, 
however, we get the generators 
  $\varphi^0 t \otimes 1 \otimes 1$, $1 \otimes \varphi^0 t \otimes
  1$,  $\varepsilon t \otimes 
  \varepsilon t \otimes 1$ and $1 \otimes 1 \otimes \varrho^0
  \varepsilon t$, so we have a $4$-dimensional vector space. 
\end{proof}
\begin{remark}
  For odd primes $2$ is invertible and this reduces the number of
  generators in total degree $2$ to $3$ in the torus homology of
  $\F_p[t]/t^2$ over $\F_p$ with $\F_p$-coefficients. For $p=2$ one
  can check that there is an extra class coming from $\begin{pmatrix}
    1 & t \\ t & 1 \end{pmatrix}$  which is homologous to
  $\begin{pmatrix} 1 & 1 \\ t & t \end{pmatrix}$ and to
  $\begin{pmatrix} 1 & t \\ 1 & t \end{pmatrix}$ so together with the
  class $\begin{pmatrix} 1 & 1 \\ 1 & t \end{pmatrix}$ this gives two
  generators in bidegree $(1,1)$ and the ones in $(2,0)$ and $(0,2)$
  giving a total of dimension $4$. As $\F_2[t]/t^2$ is a commutative
  Hopf algebra over $\F_2$, we know that $\F_2 \ra \F_2[t]/t^2$ and
  $\F_2  \ra \F_2[t]/t^2 \ra \F_2$ are stable. 
\end{remark}

We can model $S[t]/t^n$ as $S \wedge \Pi_+$ for the commutative pointed
monoid $\Pi_+=\{+,1,t,\ldots, t^{n-1}\}$. In \cite[Theorem 4.18]{hklrz} we generalize the result from 
Theorem \ref{thm:hfpsquarezero} to all $2 \leq n < p$. 
\begin{corollary} \label{cor:sttsquare}
For every $n \geq 2$ the map $S \ra S[t]/t^n$ is not multiplicatively stable . 
\end{corollary}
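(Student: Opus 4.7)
The plan is to derive Corollary \ref{cor:sttsquare} by a base-change argument that reduces the statement about the sphere spectrum to the already-established instability of $H\F_p \ra H\F_p[t]/t^n$. I will argue by contradiction: suppose $S \ra S[t]/t^n$ is multiplicatively stable, and choose a prime $p$ with $p > n$ (and $p$ odd, which is automatic as soon as $n \geq 2$ and we pick any odd prime strictly above $n$).

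First I would apply Proposition \ref{prop:induction} to the situation $R = S$, $A = S[t]/t^n$, $E = H\F_p$. This yields that $H\F_p \ra H\F_p \wedge_S S[t]/t^n$ is multiplicatively stable. Next I would identify this base change explicitly: using the model $S[t]/t^n = S \wedge \Pi_+$ for the pointed commutative monoid $\Pi_+ = \{+, 1, t, \ldots, t^{n-1}\}$ recalled just before the corollary, one has, as commutative $H\F_p$-algebras,
\[ H\F_p \wedge_S S[t]/t^n \;=\; H\F_p \wedge_S (S \wedge \Pi_+) \;\simeq\; H\F_p \wedge \Pi_+ \;\simeq\; H\F_p[t]/t^n, \]
where the last identification uses that smashing with the monoid ring $S\wedge\Pi_+$ over $S$ simply promotes $\Pi_+$ to the corresponding monoid ring on $H\F_p$.

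The combination of these two steps shows that $H\F_p \ra H\F_p[t]/t^n$ is multiplicatively stable. For $n = 2$ this directly contradicts Corollary \ref{cor:absolute} (applied to any odd prime $p$, which exists since we can freely enlarge $p$). For $n \geq 3$, having chosen $p > n$, it contradicts \cite[Theorem 4.18]{hklrz}, which establishes the analogue of Theorem \ref{thm:hfpsquarezero} in the range $2 \leq n < p$ and in particular yields that $H\F_p \ra H\F_p[t]/t^n$ is not multiplicatively stable for such $(n,p)$.

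The only real step requiring care is the identification $H\F_p \wedge_S S[t]/t^n \simeq H\F_p[t]/t^n$ as commutative $H\F_p$-algebras: this is not a deep fact but needs the monoid-ring viewpoint on $S[t]/t^n$ and the compatibility of the base-change $(-) \wedge_S H\F_p$ with smashing in a pointed commutative monoid, so that the resulting $H\F_p$-algebra structure matches the algebraic truncated polynomial algebra. Once that compatibility is in hand, the rest is a formal chain of implications: multiplicative stability of $S \ra S[t]/t^n$ forces multiplicative stability of $H\F_p \ra H\F_p[t]/t^n$, which is known to fail, completing the proof.
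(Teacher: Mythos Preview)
Your proposal is correct and follows essentially the same approach as the paper: argue by contradiction, base-change along $S \ra H\F_p$ (the paper invokes Lemma~\ref{lem:basechange} directly while you use its consequence Proposition~\ref{prop:induction}), identify $H\F_p \wedge_S S[t]/t^n \simeq H\F_p[t]/t^n$, and then contradict Corollary~\ref{cor:absolute} for $n=2$ and \cite[Theorem 4.18]{hklrz} for larger $n$. Your explicit discussion of the monoid-ring identification is a detail the paper leaves implicit but is otherwise the same argument.
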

\begin{proof}
If it were multiplicatively stable, then by Lemma \ref{lem:basechange} $H\F_p \ra
H\F_p[t]/t^n$ would be as well.  For $n=2$ this contradicts the result above. For higher $n$,
there is a prime $p$ with $p > n$, and then \cite[Theorem 4.18]{hklrz} yields that $H\F_p \ra
H\F_p[t]/t^n$ isn't multiplicatively stable. 
\end{proof}

\begin{remark}
  Neither stability nor multiplicative stability are transitive:
for every commutative ring $k$ the map $k \ra k[t]$ is smooth, hence
(multiplicatively) stable and $k[t] \ra k[t]/t^2$ is stable by Proposition
\ref{prop:regular}, but for $k = \Q$ Dundas and Tenti show \cite{dt}
that $\Q \ra \Q[t]/t^2$ is not stable and for $k=\F_p$ the know that $\F_p \ra
\F_p[t]/t^2$ is not multiplicatively stable.   
\end{remark}

\begin{proposition} \label{prop:pmonoids}
  Let $k$ be a field and let $\Pi_+$ be a pointed commutative
  monoid. If $S \ra Hk$ and $Hk \ra Hk[\Pi_+]$  are multiplicatively
  stable, then $\Sigma X \simeq \Sigma Y$ in $\ssets_*$ implies that
  $\cL_X^S(Hk[\Pi_+]) \simeq \cL_Y^S(Hk[\Pi_+])$ as augmented commutative
  $Hk$-algebras.    
\end{proposition}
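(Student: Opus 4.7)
The plan is to decompose $\cL_X^S(Hk[\Pi_+])$ via the Juggling Lemma \cite[Lemma 3.1]{bhlprz} applied to the sequence $S \to Hk \to Hk[\Pi_+]$, and then feed in the two stability hypotheses separately. Recall that the Juggling Lemma, as quoted in the proof of the descent theorem above, gives $\cL_X^{Hk}(Hk[\Pi_+]) \simeq Hk[\Pi_+] \wedge_{\cL_X^S(Hk; Hk[\Pi_+])} \cL_X^S(Hk[\Pi_+])$. Combined with the identification $\cL_X^S(Hk; Hk[\Pi_+]) \simeq \cL_X^S(Hk) \wedge_{Hk} Hk[\Pi_+]$ from the Remark following the definition of linear stability, smashing with $\cL_X^S(Hk;Hk[\Pi_+])$ over $Hk[\Pi_+]$ rewrites this as
\[
\cL_X^S(Hk[\Pi_+]) \simeq \cL_X^S(Hk) \wedge_{Hk} \cL_X^{Hk}(Hk[\Pi_+])
\]
as augmented commutative $Hk[\Pi_+]$-algebras, and in particular as augmented commutative $Hk$-algebras. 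The same identity holds for $Y$ in place of $X$.

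Next, assume $\Sigma X \simeq \Sigma Y$ in $\ssets_*$. The multiplicative stability of $S \to Hk$ yields an equivalence $\cL_X^S(Hk) \simeq \cL_Y^S(Hk)$ of augmented commutative $Hk$-algebras, and the multiplicative stability of $Hk \to Hk[\Pi_+]$ yields an equivalence $\cL_X^{Hk}(Hk[\Pi_+]) \simeq \cL_Y^{Hk}(Hk[\Pi_+])$ of augmented commutative $Hk[\Pi_+]$-algebras, in particular of commutative $Hk$-algebras. Smashing these two equivalences over $Hk$ and combining with the decomposition above produces the desired zigzag
\[
\cL_X^S(Hk[\Pi_+]) \simeq \cL_X^S(Hk) \wedge_{Hk} \cL_X^{Hk}(Hk[\Pi_+]) \simeq \cL_Y^S(Hk) \wedge_{Hk} \cL_Y^{Hk}(Hk[\Pi_+]) \simeq \cL_Y^S(Hk[\Pi_+])
\]
as augmented commutative $Hk$-algebras.

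The one subtlety to watch is cofibrancy: we need the relevant commutative $S$- and $Hk$-algebra structures on $Hk$ and $Hk[\Pi_+]$ to be cofibrant enough that the derived smash products in the Juggling Lemma represent the correct homotopy types, and that the equivalences obtained from multiplicative stability can actually be smashed together over $Hk$ without introducing spurious corrections. This is the main place one has to be careful, but it is handled by passing to standard cofibrant replacements as commutative $Hk$-algebras, exactly as in the proof of Proposition \ref{prop:regular}. Since $k$ is a field, $Hk$ is particularly well-behaved as a base, so no additional flatness hypothesis is needed.
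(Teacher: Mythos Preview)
Your overall strategy---establish the splitting
\[ \cL_X^S(Hk[\Pi_+]) \simeq \cL_X^S(Hk) \wedge_{Hk} \cL_X^{Hk}(Hk[\Pi_+]) \]
and then feed in the two stability hypotheses---is exactly what the paper does, and once the splitting is in hand the remainder of your argument is correct.

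The gap is in your derivation of the splitting. Write $B = Hk[\Pi_+]$, $C = \cL_X^S(Hk;B) \simeq \cL_X^S(Hk)\wedge_{Hk} B$, and $D = \cL_X^S(B)$. The Juggling Lemma does give $\cL_X^{Hk}(B) \simeq B \wedge_C D$, but applying $C \wedge_B (-)$ does not recover $D$ on the right-hand side: the functors $B \wedge_C (-)$ and $C \wedge_B (-)$ are not inverse to one another. Concretely, $C \wedge_B (B \wedge_C D)$ simply collapses, by associativity, back to $C \wedge_B \cL_X^{Hk}(B) = \cL_X^S(Hk) \wedge_{Hk} \cL_X^{Hk}(B)$, so your manipulation yields only the tautology that this expression equals itself, not an identification with $D$. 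Notice also that your argument, as written, never uses that $\Pi_+$ is a pointed monoid; it would purport to prove the splitting for an arbitrary commutative $Hk$-algebra from the Juggling Lemma alone.

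The paper instead obtains the splitting by citing \cite[Theorem~7.1]{hm}. What makes it work is precisely the monoid hypothesis: since $Hk[\Pi_+] \simeq Hk \wedge_S S[\Pi_+]$, Lemma~\ref{lem:pushouts} gives $\cL_X^S(Hk[\Pi_+]) \simeq \cL_X^S(Hk) \wedge_S \cL_X^S(S[\Pi_+])$ while Lemma~\ref{lem:basechange} gives $\cL_X^{Hk}(Hk[\Pi_+]) \simeq Hk \wedge_S \cL_X^S(S[\Pi_+])$, and combining these yields the splitting as augmented commutative $Hk$-algebras. You should replace the Juggling-Lemma manipulation with this argument (or the citation).
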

\begin{proof}
This follows from the splitting of $\cL_X^S(Hk[\Pi_+])$ as a
commutative augmented $Hk$-algebra  \cite[Theorem 7.1]{hm} as  
\begin{equation} \label{eq:splitting} 
  \cL_X^S(Hk[\Pi_+]) \simeq \cL_X^S(Hk) \wedge_{Hk} \cL_X^{Hk}(Hk[\Pi_+]).
  \end{equation}
\end{proof}

It is important to know whether $S \ra Hk$ is
$Hk$-linearly stable, because if it is, then for all $Hk$-linearly stable $Hk \ra HA$ that satisfy a splitting
formula as in \eqref{eq:splitting}, such as polynomial algebras, 
we would get that $S \ra HA$ is $Hk$-linearly stable.

Of course, $S \ra H\Q$ is multiplicatively stable because $S_\Q \simeq H\Q$ and
$H\Q \wedge_S H\Q \simeq H\Q$. 
We do not know whether $S \ra H\F_p$ is stable. We can express $H\F_p$ as a Thom spectrum, but this
Thom spectrum structure comes from a double loop map, so it is not of the form needed for Corollary
\ref{cor:thomstable}. So we leave this as an open question:

\begin{center}
Is $H\F_p$ stable? 
 \end{center} 

\bigskip 

We close with a family of examples that show that several of the Juggling Formulas from \cite{bhlprz} cannot
be generalized to arbitrary pointed simplicial sets because that would contradict certain non-stability results. 

Let $k$ be a field. The case $k \ra k[t] = R \ra k[t]/t^m = R/t^m \ra
k = R/t$ for $m \geq 2$ is special in the 
sense that the quotient $k[t]/t^m$ is itself a commutative augmented
$k$-algebra, so we can combine our splitting result for higher order
Shukla homology \cite[Proposition 7.5]{bhlprz} with 
the Juggling Formula \cite[Theorem 3.3]{bhlprz}. We have 
 \cite[Theorem 7.6]{bhlprz}: 
\begin{equation} \label{eq:tatesplitting} 
\THH^{[n]}(k[t]/t^m; k) \simeq \THH^{[n]}(k[t]; k) \wedge_{Hk}
\THH^{[n], Hk[t]}(k[t]/t^m; k) 
\end{equation}
for all $n \geq 1$ and for all $m \geq 2$. In this special case we
can get the following $Hk$-version of this result: 

\begin{theorem}
  Let $k$ be a field and let $m$ be greater or equal to $2$. Then for
  all $n \geq 1$ 
\begin{equation} \label{eq:augmsplitting}
  \HH^{[n],k}(k[t]/t^m; k) \simeq \HH^{[n],k}(k[t]; k) \wedge^L_{Hk}
  \THH^{[n],Hk[t]}(k[t]/t^m; k).
  \end{equation}
\end{theorem}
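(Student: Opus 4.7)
The plan is to derive the $Hk$-version of the splitting directly from the already-established $S$-version \eqref{eq:tatesplitting} by separating off the topological contribution $\THH^{[n]}(Hk)$ using the pointed-monoid splitting \eqref{eq:splitting}.

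First I would view $k[t]$ and $k[t]/t^m$ as spherical monoid algebras: set $\Pi_+ = \{+,1,t,t^2,\ldots\}$ and $\Pi_+^{(m)} = \{+,1,t,\ldots,t^{m-1}\}$, so that $Hk[t] = Hk \wedge \Pi_+$ and $Hk[t]/t^m = Hk \wedge \Pi_+^{(m)}$ as commutative $Hk$-algebras. Applying the splitting \eqref{eq:splitting} from \cite[Theorem 7.1]{hm} to both monoid algebras, I obtain equivalences of commutative augmented $Hk$-algebras
\[ \cL^S_{S^n}(Hk[t]) \simeq \THH^{[n]}(Hk) \wedge_{Hk} \cL^{Hk}_{S^n}(Hk[t]) \quad \text{and similarly for } Hk[t]/t^m. \]
Smashing with $Hk$ over $Hk[t]$ (resp. $Hk[t]/t^m$) to introduce $Hk$-coefficients on the right factor, the equivalences descend to
\[ \THH^{[n]}(k[t]; k) \simeq \THH^{[n]}(Hk) \wedge_{Hk} \HH^{[n],k}(k[t]; k), \]
\[ \THH^{[n]}(k[t]/t^m; k) \simeq \THH^{[n]}(Hk) \wedge_{Hk} \HH^{[n],k}(k[t]/t^m; k), \]
as commutative augmented $Hk$-algebras.

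Next I would substitute both expressions into \eqref{eq:tatesplitting}. Since the factor $\THH^{[n], Hk[t]}(k[t]/t^m; k)$ already involves $Hk[t]$ as the base, it is unchanged, and we obtain
\[ \THH^{[n]}(Hk) \wedge_{Hk} \HH^{[n],k}(k[t]/t^m;k) \simeq \THH^{[n]}(Hk) \wedge_{Hk} \HH^{[n],k}(k[t];k) \wedge_{Hk} \THH^{[n], Hk[t]}(k[t]/t^m;k) \]
as commutative augmented $Hk$-algebras. To eliminate the common factor $\THH^{[n]}(Hk) \wedge_{Hk}(-)$, I would exploit the augmentation $\THH^{[n]}(Hk) \to Hk$ (induced by the collapse $S^n\to *$) to view $Hk$ as a $\THH^{[n]}(Hk)$-algebra, and apply $Hk \wedge^L_{\THH^{[n]}(Hk)}(-)$ to both sides. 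Associativity of the derived smash product in EKMM yields, for any $Hk$-module $M$,
\[ Hk \wedge^L_{\THH^{[n]}(Hk)}\bigl(\THH^{[n]}(Hk) \wedge_{Hk} M\bigr) \simeq \bigl(Hk \wedge^L_{\THH^{[n]}(Hk)} \THH^{[n]}(Hk)\bigr) \wedge_{Hk} M \simeq M, \]
and applying this on both sides produces the desired splitting \eqref{eq:augmsplitting}.

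The main obstacle is the final cancellation step: it requires that the equivalence \eqref{eq:tatesplitting} and the two applications of \eqref{eq:splitting} all live coherently in the category of modules over $\THH^{[n]}(Hk)$ (with the appropriate augmented $Hk$-algebra structure), so that smashing with $Hk$ over $\THH^{[n]}(Hk)$ is meaningful and the associativity manipulation goes through. This should follow from the naturality of the monoidal splitting from \cite{hm}, which produces the equivalences as maps of commutative augmented $Hk$-algebras and in particular of $\THH^{[n]}(Hk)$-modules; modulo standard cofibrancy replacements this makes the cancellation legitimate.
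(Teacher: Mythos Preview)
Your approach is different from the paper's, and although the outline is sound, the cancellation step is where the real content hides and you have not fully discharged it. The paper does not pass through \eqref{eq:tatesplitting} at all; instead it proves the $Hk$-splitting directly by pasting two homotopy pushout squares
\[
\xymatrix{
Hk \ar[r] \ar[d] & \THH^{[n-1],Hk[t]}(k) \ar[r]\ar[d] & \HH^{[n],k}(k[t];k) \ar[d]\\
\THH^{[n],Hk[t]}(k[t]/t^m;k) \ar[r] & \THH^{[n-1],Hk[t]/t^m}(k) \ar[r] & \HH^{[n],k}(k[t]/t^m;k)
}
\]
where the left square is the Shukla splitting \cite[Proposition 7.5]{bhlprz} and the right square is an instance of the Juggling Formula \cite[Theorem 3.3]{bhlprz} for $Hk \to Hk[t] \to Hk[t]/t^m \to Hk$. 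Concatenating pushouts gives the outer rectangle as a pushout, which is exactly \eqref{eq:augmsplitting}. No cancellation or comparison of module structures is needed.

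Your route can be made to work, but the sentence ``this should follow from the naturality of the monoidal splitting'' is doing a lot of work. What you actually need is that the equivalence \eqref{eq:tatesplitting} is an equivalence of commutative $\THH^{[n]}(Hk)$-algebras, with the $\THH^{[n]}(Hk)$-action on the right-hand side coming through the $\THH^{[n]}(k[t];k)$ factor. The monoid splittings \emph{are} natural in the monoid, so the $\THH^{[n]}(Hk)$-structure they produce is the left-factor one; but for \eqref{eq:tatesplitting} itself you must go back into the proof in \cite[Theorem~7.6]{bhlprz} and check that the zigzag there lies over $\THH^{[n]}(Hk)$. That is true (everything in that proof is natural in the unit map $Hk \to Hk[t]$), but it is an argument, not an observation, and it essentially amounts to redoing the paper's two-square argument anyway. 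The paper's direct pushout-pasting is therefore both shorter and avoids the bookkeeping you flag as an obstacle.
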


\begin{proof}
  Consider the diagram
  \[\xymatrix{
{Hk} \ar[r] \ar[d] & {\THH^{[n-1],Hk[t]}(k)} \ar[r]\ar[d] &
{\HH^{[n],k}(k[t];k)} \ar[d]\\ 
{\THH^{[n],Hk[t]}(k[t]/t^m; k)} \ar[r] & {\THH^{[n-1],Hk[t]/t^m}(k)}
\ar[r] & {\HH^{[n],k}(k[t]/t^m; k).} 
    } \]

  The left-hand square is a homotopy pushout square by
  \cite[Proposition 7.5]{bhlprz} and the juggling formula
  \cite[Theorem 3.3]{bhlprz} applied to $Hk \ra Hk[t] \ra Hk[t]/t^m
  \ra Hk$ ensures that the right-hand square is also a homotopy
  pushout square because for all $n \geq 1$
  \[ \HH^{[n],k}(k[t]/t^m; k) \simeq \HH^{[n],k}(k[t];k)
    \wedge^L_{\THH^{[n-1],Hk[t]}(k)}  \THH^{[n-1],Hk[t]/t^m}(k). \] 
  This yields that the outer rectangle is also a homotopy pushout
  square and this was the claim. 
  \end{proof}
  
  \begin{remark}
    Note that there cannot be a version of \eqref{eq:tatesplitting}
    and \eqref{eq:augmsplitting} for arbitrary connected $X$: We know
    that $Hk[t] \ra Hk[t]/t^2 \ra Hk$ is multiplicatively stable for
    all fields $k$ and we know that $Hk \ra Hk[t] \ra Hk$ is
    stable. But for any odd prime $p$ we know that $H\F_p \ra
    H\F_p[t]/t^2 \ra H\F_p$ is not stable and that there is an actual
    discrepancy between
    \[\pi_2(\cL_{S^1 \times
      S^1}^{H\F_p}(H\F_p[t]/t^2; H\F_p)) \not\cong \pi_2(\cL_{S^1 \vee S^1
      \vee S^2}^{H\F_p}(H\F_p[t]/t^2; H\F_p)), \]
  so there cannot be an
    equivalence between $\cL_{S^1 \times S^1}^{H\F_p}(H\F_p[t]/t^2;
    H\F_p)$ and $\cL_{S^1 \vee S^1 \vee S^2}^{H\F_p}(H\F_p[t]/t^2;
    H\F_p)$.  
\end{remark}

\section{Thom spectra and topological K-theory}

Christian Schlichtkrull gives a closed formula for the Loday construction
on Thom spectra \cite[Theorem 1.1]{schlichtkrull}: Let $f \colon W \ra
BF_{hI}$ be an $E_\infty$-map 
with $W$ grouplike and let $T(f)$ denote the corresponding Thom
spectrum. Then for any $T(f)$-module spectrum $M$ one has
\begin{equation} \label{eq:thom}
  \cL_X^S(T(f); M) \simeq M \wedge \Omega^\infty(E_W \wedge X)_+
\end{equation}
where $E_W$ is the Omega spectrum associated to $W$ (\ie, $W \simeq
\Omega^\infty E_W$). If $M$ is a 
commutative $T(f)$-algebra spectrum, then 
the above equivalence is one of commutative $T(f)$-algebras. For $M =
T(f)$ the equivalence also respects the augmentation. 

An immediate consequence of Schlichtkrull's result is the following: 
\begin{corollary} \label{cor:thomstable}
If $T(f)$ is a Thom spectrum as above, then $S \ra T(f)$ is
multiplicatively stable.  
\end{corollary}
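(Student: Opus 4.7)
The plan is to apply Schlichtkrull's formula \eqref{eq:thom} with $M = T(f)$, which gives
\[ \cL_X^S(T(f)) \simeq T(f) \wedge \Omega^\infty(E_W \wedge X)_+ \]
as commutative augmented $T(f)$-algebras, and then observe that the right-hand side depends only on the stable homotopy type of $X$. So the whole argument reduces to a standard fact about suspension spectra.

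More concretely, the first step is to note that if $\Sigma X \simeq \Sigma Y$ in $\ssets_*$, then applying $\Sigma^\infty$ and using that $\Sigma^\infty \Sigma \simeq \Sigma \,\Sigma^\infty$, together with the fact that $\Sigma$ is invertible on the stable homotopy category, yields a weak equivalence of spectra $\Sigma^\infty X \simeq \Sigma^\infty Y$. Smashing the spectrum $E_W$ with a pointed simplicial set agrees (up to equivalence) with smashing with its suspension spectrum, so we obtain an equivalence of spectra $E_W \wedge X \simeq E_W \wedge Y$, and consequently an equivalence of infinite loop spaces
\[ \Omega^\infty(E_W \wedge X) \simeq \Omega^\infty(E_W \wedge Y). \]

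Adding a disjoint basepoint and smashing with $T(f)$ preserves this equivalence, and by Schlichtkrull's theorem the equivalence is one of commutative augmented $T(f)$-algebras. Composing the two instances of \eqref{eq:thom} then gives the desired equivalence
\[ \cL_X^S(T(f)) \simeq T(f) \wedge \Omega^\infty(E_W \wedge X)_+ \simeq T(f) \wedge \Omega^\infty(E_W \wedge Y)_+ \simeq \cL_Y^S(T(f)) \]
as augmented commutative $T(f)$-algebras, which is precisely multiplicative stability of $S \to T(f)$.

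There is essentially no obstacle here beyond invoking \eqref{eq:thom}: the only thing to check is that smashing a spectrum with a pointed space factors through the suspension spectrum functor, so that a stable equivalence between $X$ and $Y$ (equivalent to an unstable equivalence between $\Sigma X$ and $\Sigma Y$) transfers to an equivalence of the relevant Thom-spectrum constructions, preserving multiplicative and augmentation structure. The proof is therefore a direct application of Schlichtkrull's result combined with the elementary observation that $\Sigma^\infty$ sends unstable suspension equivalences to stable equivalences.
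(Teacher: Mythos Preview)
Your proof is correct and follows essentially the same approach as the paper: both arguments apply Schlichtkrull's formula \eqref{eq:thom} and reduce to showing that $E_W \wedge X$ depends only on the stable homotopy type of $X$, then use that an equivalence of spectra yields an equivalence of infinite loop spaces (hence of $E_\infty$-spaces), so that the induced equivalence on $T(f)\wedge\Omega^\infty(-)_+$ respects the commutative augmented $T(f)$-algebra structure. The only cosmetic difference is that the paper passes through $\Sigma(E_W\wedge X)\simeq E_W\wedge\Sigma X$ directly, whereas you route through $\Sigma^\infty X\simeq\Sigma^\infty Y$; these are equivalent formulations of the same elementary observation.
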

\begin{proof}
  If $\Sigma X \simeq \Sigma Y$ in $\ssets_*$, then on the level of spectra we obtain
  \[ \Sigma(E_W \wedge X) \simeq E_W \wedge \Sigma X \simeq E_W \wedge
    \Sigma Y \simeq \Sigma(E_W \wedge Y),\]
  but here suspension is invertible, hence
  $E_W \wedge X \simeq E_W \wedge Y$ and therefore
  \[ \cL_X^S(T(f)) \simeq \cL_Y^S(T(f)).\]

An equivalence of spectra induces an equivalence of infinite
loop spaces and the $T(f)$-algebra structure on $T(f) \wedge
\Omega^\infty(E_W \wedge X)_+$ just 
comes from the one on $T(f)$ and the infinite loop structure on
$\Omega^\infty(E_W \wedge X)$. This gives the multiplicativity of the
stability.  
\end{proof}

The case of the suspension spectrum of an abelian topological group is
a special case where we take $f \colon G \ra BF_{hI}$ to be the
trivial map. Then $T(f) \simeq \Sigma^\infty_+(G)$. Other examples are
$MU$, $MO$, $MSO$, $MSp$ or $MSpin$.

\begin{remark}
  Nima Rasekh, Bruno Stonek, and Gabriel Valenzuela \cite[Theorem 4.11]{rsv} generalize Schlichtkrull's calculation
  to generalized Thom spectra, \ie, Thom spectra that are formed with respect to a map of $E_\infty$-groups $f \colon
  G \ra \mathrm{Pic}(R)$ for some commutative ring spectrum $R$. They note (see \cite[Remark 4.14]{rsv}) that
  this implies stability for such Thom spectra. 
\end{remark}

\begin{remark}
  Note that by Corollary \ref{cor:sttsquare} spherical abelian monoid
  rings are \emph{not} stable in general, whereas spherical abelian group rings are. 

\end{remark}

Bruno Stonek calculates higher $\THH$ of periodic complex topological
K-theory, $KU$, and he determines topological Andr\'e-Quillen 
homology of $KU$ \cite{stonek}. He uses Snaith's description of $KU$
as the Bott localization of $\Sigma^\infty_+ \mathbb{C}P^\infty$. The
latter is a Thom spectrum because $\mathbb{C}P^\infty = BU(1)$ can be
realized as a topological abelian group.

\begin{theorem} \label{thm:KU}
  If $X$ and $Y$ are connected and $\Sigma X \simeq \Sigma Y$ in $\ssets_*$, then
  \[ \cL_X^S(KU) \simeq \cL_Y^S(KU)\]
  as commutative augmented $KU$-algebra spectra. 
\end{theorem}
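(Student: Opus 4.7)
The strategy is to use Snaith's description $KU \simeq \Sigma^\infty_+ \mathbb{C}P^\infty[\beta^{-1}]$ to reduce the claim to stability of the Thom spectrum $\Sigma^\infty_+ \mathbb{C}P^\infty$, for which Corollary \ref{cor:thomstable} applies, and then to transfer stability through the Bott localization. First, I would observe that $\mathbb{C}P^\infty \simeq BU(1)$ carries the structure of a topological abelian group, so its unreduced suspension spectrum is the Thom spectrum of the trivial map $\mathbb{C}P^\infty \to BF_{hI}$. Corollary \ref{cor:thomstable} then yields that $S \to \Sigma^\infty_+ \mathbb{C}P^\infty$ is multiplicatively stable, so for $\Sigma X \simeq \Sigma Y$ in $\ssets_*$ there is a weak equivalence
\[ \cL_X^S(\Sigma^\infty_+ \mathbb{C}P^\infty) \simeq \cL_Y^S(\Sigma^\infty_+ \mathbb{C}P^\infty) \]
of augmented commutative $\Sigma^\infty_+ \mathbb{C}P^\infty$-algebras.

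Next, I would establish the identification
\[ \cL_X^S(KU) \simeq \cL_X^S(\Sigma^\infty_+ \mathbb{C}P^\infty) \wedge_{\Sigma^\infty_+ \mathbb{C}P^\infty} KU \]
as augmented commutative $KU$-algebras. Writing $KU$ as the filtered homotopy colimit $\hocolim_n \Sigma^{-2n}\Sigma^\infty_+ \mathbb{C}P^\infty$ along multiplication by the Bott element $\beta$, and using that the Loday construction commutes with filtered homotopy colimits in the coefficient ring, one sees that $\cL_X^S(KU)$ is the corresponding $\beta$-inversion of $\cL_X^S(\Sigma^\infty_+ \mathbb{C}P^\infty)$ carried out via the $\Sigma^\infty_+ \mathbb{C}P^\infty$-algebra structure coming from any vertex of $X$. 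The connectedness hypothesis enters precisely here: for connected $X$ the Bott elements arising from different non-basepoint vertices are all identified in $\pi_* \cL_X^S(\Sigma^\infty_+ \mathbb{C}P^\infty)$ via the simplicial structure maps, so the localization is unambiguous and agrees with the single-variable $\beta$-inversion that produces $KU$ from $\Sigma^\infty_+ \mathbb{C}P^\infty$.

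Combining these two steps and base-changing along the map $\Sigma^\infty_+ \mathbb{C}P^\infty \to KU$, the equivalence from the first step descends to a weak equivalence
\[ \cL_X^S(KU) \simeq \cL_X^S(\Sigma^\infty_+ \mathbb{C}P^\infty)[\beta^{-1}] \simeq \cL_Y^S(\Sigma^\infty_+ \mathbb{C}P^\infty)[\beta^{-1}] \simeq \cL_Y^S(KU) \]
of augmented commutative $KU$-algebras, since the equivalence of $\Sigma^\infty_+ \mathbb{C}P^\infty$-Loday constructions from the first step respects the $\Sigma^\infty_+ \mathbb{C}P^\infty$-algebra structures and so in particular matches the Bott elements on the two sides. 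The main obstacle is the formal verification of the interchange $\cL_X^S(A[\beta^{-1}]) \simeq \cL_X^S(A)[\beta^{-1}]$ together with its compatibility with the augmentation; this is implicit in Stonek's calculation \cite{stonek} of higher $\THH$ of $KU$, and alternatively it can be handled by invoking the formula for generalized Thom spectra of Rasekh--Stonek--Valenzuela, which produces a closed form for $\cL_X^S(KU)$ in the spirit of \eqref{eq:thom}.
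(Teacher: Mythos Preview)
Your proposal is correct and follows essentially the same route as the paper: both use Snaith's identification $KU \simeq \Sigma^\infty_+\mathbb{C}P^\infty[\beta^{-1}]$, invoke Corollary~\ref{cor:thomstable} for the Thom spectrum $\Sigma^\infty_+\mathbb{C}P^\infty$, and then transfer stability through the Bott localization via the identification $\cL_X^S(KU) \simeq \cL_X^S(\Sigma^\infty_+\mathbb{C}P^\infty) \wedge_{\Sigma^\infty_+\mathbb{C}P^\infty} KU$ for connected $X$ (citing Stonek for this step). The only cosmetic difference is that the paper packages your second step by saying that $\Sigma^\infty_+\mathbb{C}P^\infty \to KU$ \emph{satisfies \'etale descent} and then invokes the general Proposition~\ref{prop:etale}, whereas you argue the base-change directly; the content is the same.
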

\begin{proof}
Let $\beta$ denote the Bott element. Stonek uses Snaith's identification of $KU$ as
  $\Sigma^\infty_+\mathbb{C}P^\infty[\beta^{-1}]$ to prove
  \cite[Corollary 4.12]{stonek} that there is a zigzag of equivalences 
  \[ \xymatrix{
{\THH(KU) \simeq  \THH(\Sigma^\infty_+\mathbb{C}P^\infty[\beta^{-1}])} &
\ar[l]_(0.5)\simeq \ar[d]^\simeq 
\THH(\Sigma^\infty_+\mathbb{C}P^\infty)
\wedge_{\Sigma^\infty_+\mathbb{C}P^\infty} 
      \Sigma^\infty_+\mathbb{C}P^\infty[\beta^{-1}]  \\
&       (\THH(\Sigma^\infty_+\mathbb{C}P^\infty))[\beta^{-1}]. 
    } \]
The same argument yields that for any connected $X$ the localization
of $\cL_X^S(\Sigma^\infty_+\mathbb{C}P^\infty)$ at $\beta$ is equivalent 
  to $\cL_X^S(\Sigma^\infty_+\mathbb{C}P^\infty[\beta^{-1}]) = \cL_X^S(KU)$. 

The localization map $\Sigma^\infty_+\mathbb{C}P^\infty
\ra \Sigma^\infty_+\mathbb{C}P^\infty[\beta^{-1}]$ satisfies \'etale
descent, and therefore the composite
$S \ra \Sigma^\infty_+\mathbb{C}P^\infty \ra
\Sigma^\infty_+\mathbb{C}P^\infty[\beta^{-1}]$ identifies 
$KU$ as an \'etale extension of a Thom spectrum. By Proposition
\ref{prop:etale} we obtain multiplicative stability for connected
simplicial sets.  
\end{proof}

\begin{corollary}
  If $X$ and $Y$ are connected simplicial sets with $\Sigma X \simeq
  \Sigma Y$ then $\cL_X^S(KO) \simeq \cL_Y^S(KO)$ as commutative
  $KO$-algebras.  
\end{corollary}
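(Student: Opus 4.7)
The plan is to derive this as an application of Theorem~\ref{thm:etaledescent} to the extension $KO \to KU$. By Rognes \cite{rognes}, $KO \to KU$ is a faithful $C_2$-Galois extension, with $C_2$ acting by complex conjugation; and this extension is well-known to satisfy $\THH$-\'etale descent, hence the \'etale descent hypothesis of Theorem~\ref{thm:etaledescent} is met. So the task reduces to upgrading the conclusion of Theorem~\ref{thm:KU} to a $C_2$-\emph{equivariant} equivalence of commutative $KU$-algebras $\cL_X^S(KU) \simeq \cL_Y^S(KU)$ whenever $\Sigma X \simeq \Sigma Y$ and $X, Y$ are connected.

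To see that the zigzag of equivalences in the proof of Theorem~\ref{thm:KU} is compatible with the $C_2$-action, observe that complex conjugation acts on $\mathbb{C}P^\infty = BU(1)$ as a map of commutative topological groups, and Snaith's identification $KU \simeq \Sigma^\infty_+ \mathbb{C}P^\infty[\beta^{-1}]$ is $C_2$-equivariant (with $\beta$ sent to a unit multiple of itself, so that localization is compatible with the action). Schlichtkrull's formula \eqref{eq:thom} is natural in the $E_\infty$-map $f \colon W \to BF_{hI}$, so the equivalence
\[ \cL_X^S(\Sigma^\infty_+ \mathbb{C}P^\infty) \simeq \Sigma^\infty_+ \mathbb{C}P^\infty \wedge \Omega^\infty(E_{\mathbb{C}P^\infty} \wedge X)_+ \]
carries a $C_2$-action on both sides, with the equivalence $C_2$-equivariant. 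The passage via \'etale descent from $\Sigma^\infty_+ \mathbb{C}P^\infty$ to its Bott localization is likewise natural and hence $C_2$-equivariant. Finally, if $\Sigma X \simeq \Sigma Y$ then $E_{\mathbb{C}P^\infty} \wedge X \simeq E_{\mathbb{C}P^\infty} \wedge Y$ with the diagonal $C_2$-action (trivial on $X$, $Y$), so after applying $\Omega^\infty(-)_+$ and smashing with $KU$ we obtain a $C_2$-equivariant equivalence of commutative augmented $KU$-algebras $\cL_X^S(KU) \simeq \cL_Y^S(KU)$.

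Having verified the equivariant hypothesis, Theorem~\ref{thm:etaledescent} applied to $KO \to KU$ delivers an equivalence of commutative augmented $KO$-algebras $\cL_X^S(KO) \simeq \cL_Y^S(KO)$, by taking $C_2$-homotopy fixed points of the chain
\[ KU \wedge_{KO} \cL_X^S(KO) \simeq \cL_X^S(KU) \simeq \cL_Y^S(KU) \simeq KU \wedge_{KO} \cL_Y^S(KO). \]
The main technical obstacle is the verification of $C_2$-equivariance throughout Stonek's zigzag and in Schlichtkrull's formula, which hinges on the naturality of each constituent equivalence with respect to maps of $E_\infty$-group valued inputs; everything else reduces to a direct application of the descent machinery already established in the paper.
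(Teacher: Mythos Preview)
Your proof is correct and follows essentially the same approach as the paper: apply Theorem~\ref{thm:etaledescent} to the faithful $C_2$-Galois extension $KO \to KU$, using that Schlichtkrull's equivalence \eqref{eq:thom} is natural and hence $C_2$-equivariant. The paper's proof is terser (it cites Mathew \cite{mathew} and \cite{cmnn} for \'etale descent rather than asserting it as well-known, and compresses the equivariance argument to a single sentence), but you have unpacked the same reasoning in more detail.
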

\begin{proof}
  Rognes shows \cite[\S 5.3]{rognes} that the complexification map $KO
  \ra KU$ is a faithful $C_2$-Galois extension of commutative ring
  spectra and Mathew \cite[Example 4.6]{mathew} deduces from
  \cite[Example 5.9]{cmnn} that it satisfies \'etale descent. Schlichtkrull's equivalence
  from \eqref{eq:thom} is natural hence it preserves the $C_2$-action. Therefore 
  the result follows from Theorem \ref{thm:etaledescent}. 
\end{proof}

In \cite{chy} Hood Chatham, Jeremy Hahn, and Allen Yuan construct interesting examples of
$E_\infty$-ring spectra. For a prime $p$ they consider the infinite loop space
\[ W_h = \Omega^\infty \Sigma^{2\nu(h)}BP\langle h\rangle \]
where $\nu(h) = \frac{p^{h+1}-1}{p-1}$. This is the $2\nu(h)$th space of the Omega spectrum
for the $h$-truncated Brown-Peterson spectrum $BP\langle h\rangle$; these spaces were extensively studied
by Steve Wilson \cite{wilson}. On the suspension spectrum
of $W_h$ they invert the generator
$x$ of the bottom non-trivial homotopy group $\pi_{2\nu(h)}(W_h) \cong \Z_{(p)}$ and obtain an
$E_\infty$-ring spectrum
\[ R_h := (\Sigma^\infty_+ W_h)[x^{-1}]\]
which has remarkable features \cite[Theorem 1.13]{chy}: $R_h$ has torsion-free homotopy groups that
vanish in odd degrees, it is Landweber exact, and its Morava-$K(n)$ localization
$L_{K(n)}R_h$ vanishes if and only if $n > h+1$,
so $R_h$ is of chromatic height $n+1$. As $W_0$ is $\bC P^\infty$, this recovers Snaith's construction
in this special case, but there are many more interesting examples. For all of these spectra, the above method
of proof applies, so we obtain. 
\begin{theorem}
If $X$ and $Y$ are connected and $\Sigma X \simeq \Sigma Y$ in $\ssets_*$, then
  \[ \cL_X^S(R_h) \simeq \cL_Y^S(R_h)\]
  as commutative augmented $R_h$-algebra spectra.
\end{theorem}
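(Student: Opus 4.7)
The plan is to mirror the proof of Theorem \ref{thm:KU} essentially verbatim, with $W_h$ replacing $\mathbb{C}P^\infty$ and $x$ replacing the Bott element $\beta$. The two ingredients are (i) the fact that $\Sigma^\infty_+ W_h$ is a Thom spectrum in Schlichtkrull's sense, so that $S \to \Sigma^\infty_+ W_h$ is multiplicatively stable by Corollary \ref{cor:thomstable}, and (ii) the fact that the localization $\Sigma^\infty_+ W_h \to (\Sigma^\infty_+ W_h)[x^{-1}] = R_h$ satisfies \'etale descent, so that Proposition \ref{prop:etale} upgrades stability along the composite.

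First I would observe that since $W_h$ is an infinite loop space, in particular a grouplike $E_\infty$-space, the suspension spectrum $\Sigma^\infty_+ W_h$ is the Thom spectrum of the trivial map $W_h \to BF_{hI}$ (this is exactly the special case invoked for $\mathbb{C}P^\infty = BU(1)$ in Theorem \ref{thm:KU} and noted after Corollary \ref{cor:thomstable}). Hence Corollary \ref{cor:thomstable} immediately yields that $S \to \Sigma^\infty_+ W_h$ is multiplicatively stable, and in fact Schlichtkrull's formula \eqref{eq:thom} gives an explicit model
\[ \cL_X^S(\Sigma^\infty_+ W_h) \simeq \Sigma^\infty_+ W_h \wedge \Omega^\infty(\Sigma^{2\nu(h)}BP\langle h\rangle \wedge X)_+ \]
for any pointed $X$, which depends only on the stable homotopy type of $X$.

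Next, since inverting a homotopy element is a smashing (hence \'etale in the sense relevant here) localization, the argument used in Theorem \ref{thm:KU} applies: the map $\Sigma^\infty_+ W_h \to R_h = (\Sigma^\infty_+ W_h)[x^{-1}]$ satisfies \'etale descent, and for connected $X$ one has a natural equivalence $\cL_X^S(\Sigma^\infty_+ W_h)[x^{-1}] \simeq \cL_X^S(R_h)$. This exhibits $S \to \Sigma^\infty_+ W_h \to R_h$ as a composite of a multiplicatively stable map with a map satisfying \'etale descent, and Proposition \ref{prop:etale} then gives, for connected $X, Y$ with $\Sigma X \simeq \Sigma Y$ in $\ssets_*$, an equivalence
\[ \cL_X^S(R_h) \simeq \cL_Y^S(R_h) \]
of augmented commutative $R_h$-algebras.

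The main obstacle, just as in the $KU$ case, is verifying that the localization map $\Sigma^\infty_+ W_h \to R_h$ genuinely satisfies \'etale descent as a map of commutative $S$-algebras, rather than merely being a smashing Bousfield localization. For $KU$ this was folded into Stonek's analysis and the general principles collected in \cite{mathew,mccm}; for $R_h$ I would appeal to the same framework, using that localization at a single homotopy class gives a smashing localization that is \'etale in the sense of Lurie whenever the relevant tensor product computation collapses, which is guaranteed by the Landweber exactness and torsion-freeness of $R_h$ from \cite[Theorem 1.13]{chy}. Everything else is a direct transcription of the proof of Theorem \ref{thm:KU}.
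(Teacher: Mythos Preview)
Your approach is essentially the same as the paper's: the paper simply remarks that ``the above method of proof applies'' to the $R_h$, meaning exactly the argument of Theorem~\ref{thm:KU} with $W_h$ in place of $\mathbb{C}P^\infty$ and $x$ in place of $\beta$, which is precisely what you do. One small comment: the appeal to Landweber exactness and torsion-freeness at the end is a red herring---\'etale descent for inverting a single homotopy element is a general fact (the localization is smashing, hence $\THH$-\'etale over the base, and Stonek's argument works verbatim) and does not depend on any special arithmetic properties of $R_h$.
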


\section{The Greenlees spectral sequence}
Let $k$ be a field and let $A \ra B$ be a morphism of connective
commutative $S$-algebras with an 
augmentation to $Hk$ satisfying some mild finiteness assumption. Then
by \cite[Lemma 3.1]{greenlees} there is a spectral sequence
\[E^2_{s,t} = \pi_s(B \wedge_A Hk) \otimes_k \pi_t(A) \Rightarrow \pi_{s+t}(B).\]

Let $p$ be an odd prime. 
We consider the cofibration $S^1 \vee S^1 \hookrightarrow S^1 \times
S^1 \ra S^2$ and the associated pushout diagram 
\[ \xymatrix{ \cL_{S^1 \vee S^1}^R(H\F_p[t]/t^2; H\F_p) \ar[r] \ar[d] &
    \cL_{S^1 \times S^1}^R(H\F_p[t]/t^2; H\F_p) 
   \ar[d] \\
H\F_p \ar[r] & \cL_{S^2}^R(H\F_p[t]/t^2; H\F_p).  } \]
Here, $R$ can be $S$ or $H\F_p$. For $R=S$ we obtain a Greenlees
spectral sequence 
\begin{equation} \label{eq:grS}
  \pi_s(\cL_{S^2}^S(H\F_p[t]/t^2; H\F_p)) \otimes_{\F_p} \pi_t(\cL_{S^1
    \vee S^1}^S(H\F_p[t]/t^2; H\F_p)) \Rightarrow 
  \pi_{s+t}(\cL_{S^1 \times S^1}^S(H\F_p[t]/t^2; H\F_p))
\end{equation}
whereas for $R = H\F_p$ the spectral sequence is
\begin{equation} \label{eq:grfp}
  \pi_s(\cL^{H\F_p}_{S^2}(H\F_p[t]/t^2; H\F_p)) \otimes_{\F_p}
  \pi_t(\cL^{H\F_p}_{S^1 \vee S^1}(H\F_p[t]/t^2; H\F_p)) 
  \Rightarrow 
  \pi_{s+t}(\cL^{H\F_p}_{S^1 \times S^1}(H\F_p[t]/t^2; H\F_p)).
\end{equation}
In \eqref{eq:grS} every term $\cL_X^S(H\F_p[t]/t^2; H\F_p)$ splits as
\[ \cL_X^S(H\F_p) \wedge_{H\F_p} \cL_X^{H\F_p}(H\F_p[t]/t^2; H\F_p) \]
naturally in $X$, and going from working over $S$ to working over
$H\F_p$ simply collapses the $ \cL_X^S(H\F_p)$ to $ \cL_X^{H\F_p}
(H\F_p)\simeq H\F_p$.  Therefore we get a surjection of the spectral 
sequence of \eqref{eq:grS} onto the one of 
 \eqref{eq:grfp}, and if all the spectral sequence differentials
 vanish on the former, they have to vanish on the latter too.  But we
 know that the rank of $\pi_2(\cL^{H\F_p}_{S^1 \times S^1}(H\F_p[t]/t^2;
 H\F_p))$ is less  than the rank of the $E^2$-term in total degree $2$,
 hence there has to be a non-trivial differential 
in \eqref{eq:grfp} and hence also in \eqref{eq:grS}. This implies the
following result.  
\begin{theorem} \label{thm:notstable}
For every odd prime $p$, $(S, H\F_p[t]/t^2, H\F_p)$ is not stable. 
\end{theorem}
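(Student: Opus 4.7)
The strategy is to realize non-stability at the level of $\pi_2$, by comparing the product and wedge Loday constructions for $X=S^1\times S^1$ and $Y=S^1\vee S^1\vee S^2$, which satisfy $\Sigma X\simeq \Sigma Y$. The aim is to show that the rank of $\pi_2 \cL_{S^1\times S^1}^S(H\F_p[t]/t^2;H\F_p)$ is strictly less than the rank of $\pi_2 \cL_{S^1\vee S^1\vee S^2}^S(H\F_p[t]/t^2;H\F_p)$, which immediately forces the two to be inequivalent and so gives non-stability.

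The main tool is the Greenlees spectral sequence \eqref{eq:grS} applied to the cofiber sequence $S^1\vee S^1\hookrightarrow S^1\times S^1\to S^2$, namely
$E^2_{s,t}=\pi_s\cL_{S^2}^S(H\F_p[t]/t^2;H\F_p)\otimes_{\F_p}\pi_t\cL_{S^1\vee S^1}^S(H\F_p[t]/t^2;H\F_p)\Rightarrow \pi_{s+t}\cL_{S^1\times S^1}^S(H\F_p[t]/t^2;H\F_p)$. Since $\cL_{S^1\vee S^1\vee S^2}^S(H\F_p[t]/t^2;H\F_p)\simeq \cL_{S^1\vee S^1}^S\wedge_{H\F_p}\cL_{S^2}^S$, the K\"unneth theorem over the field $\F_p$ identifies the $E^2$-page in total degree $n$ with $\pi_n$ of the wedge Loday construction. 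Consequently, any non-trivial differential in \eqref{eq:grS} will translate into a rank discrepancy between the product and wedge versions.

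To exhibit such a differential I compare with the analogous spectral sequence \eqref{eq:grfp} over $H\F_p$. The Hesselholt--Madsen-style splitting $\cL_X^S(H\F_p[t]/t^2;H\F_p)\simeq \cL_X^S(H\F_p)\wedge_{H\F_p}\cL_X^{H\F_p}(H\F_p[t]/t^2;H\F_p)$, natural in $X$, induces a morphism from \eqref{eq:grS} to \eqref{eq:grfp} in which the $H\F_p$ spectral sequence appears as a natural tensor factor of the $S$ one; in particular, on $E^2$-pages this is a split surjection of $\F_p$-vector spaces. Theorem \ref{thm:hfpsquarezero} (more precisely the rank count in its proof) establishes that the rank of $\pi_2\cL_{S^1\times S^1}^{H\F_p}(H\F_p[t]/t^2;H\F_p)$ is strictly smaller than the corresponding $E^2$-rank in total degree $2$, forcing a non-trivial differential in \eqref{eq:grfp}. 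Lifting this differential along the split surjection produces a non-trivial differential in \eqref{eq:grS}, yielding the desired rank inequality.

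The main obstacle is verifying carefully that the splitting gives a genuine morphism of Greenlees spectral sequences, not merely a matching of $E^2$-pages. This reduces to checking compatibility of the splitting with the pushout square coming from the cofiber sequence $S^1\vee S^1\hookrightarrow S^1\times S^1\to S^2$, which in turn follows from naturality of the splitting in the simplicial variable $X$; once that is in place, the transfer of non-trivial differentials from the $H\F_p$ version to the $S$ version is essentially formal.
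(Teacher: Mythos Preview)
Your proposal is correct and follows essentially the same route as the paper: both arguments use the Greenlees spectral sequences \eqref{eq:grS} and \eqref{eq:grfp} for the cofibre sequence $S^1\vee S^1\hookrightarrow S^1\times S^1\to S^2$, invoke the natural splitting $\cL_X^S(H\F_p[t]/t^2;H\F_p)\simeq \cL_X^S(H\F_p)\wedge_{H\F_p}\cL_X^{H\F_p}(H\F_p[t]/t^2;H\F_p)$ to obtain a surjection of spectral sequences, and then transport the nontrivial differential in \eqref{eq:grfp} (forced by the rank computation in Theorem~\ref{thm:hfpsquarezero}) back to \eqref{eq:grS}. Your identification of the $E^2$-page in total degree $n$ with $\pi_n$ of the wedge Loday construction and your explicit attention to why the splitting yields a genuine map of spectral sequences are points the paper leaves more implicit, but the argument is the same.
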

With the results of \cite[\S 4]{hklrz} the above result can be generalized to 
$\F_p[t]/t^n$ for $2 \leq n < p$. 

\medskip
Instead of stability we can consider the following property of Loday
constructions. 
\begin{definition}
  Let $R$ be a cofibrant commutative ring spectrum and let 
  $R \ra A \ra C$ be a sequence of cofibrations of commutative
  $R$-algebras. We say that 
  $R \ra A \ra C$ \emph{decomposes products} if for all pointed connected 
  simplicial sets $X$ and $Y$ 
\[ \cL^R_{X \times Y}(A; C) \simeq \cL^R_{X \vee Y \vee X \wedge Y}(A; C)\]  
\end{definition}
%%% we didn't assume connected before, but then the problem is that only one component has to have a
%%% basepoint and this gives rubbish
\noindent 
Note that the right hand side is equivalent to $\cL^R_X(A; C) \wedge_C
\cL^R_Y(A; C) \wedge_C \cL_{X \wedge Y}^R(A; C)$. 

\begin{proposition}
Let $R \ra A \ra B \ra A \ra Hk$  be a sequence of commutative
$S$-algebras that turns $B$ into an augmented commutative
$A$-algebra. Assume that $k$ is a field. 

If $R \ra B \ra Hk$ decomposess products then so does $R \ra A \ra Hk$. 
\end{proposition}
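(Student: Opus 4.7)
The plan is to apply the Greenlees spectral sequence from the beginning of this section to the map of connective commutative $S$-algebras $\cL_Z^R(B; Hk) \ra \cL_Z^R(A; Hk)$ induced by the augmentation $\beta \colon B \ra A$, viewed with its natural augmentation to $Hk$, and to compare the resulting spectral sequences for $Z = X \times Y$ versus $Z' = X \vee Y \vee X \wedge Y$, where $X$ and $Y$ are pointed connected. The Juggling Formula \cite[Lemma 3.1]{bhlprz} applied to $R \ra B \ra A$ with coefficients in $Hk$ identifies the relative smash product appearing in the $E^2$-page:
\[ \cL_Z^R(A; Hk) \wedge_{\cL_Z^R(B; Hk)} Hk \;\simeq\; \cL_Z^B(A; Hk), \]
so the spectral sequence reads
\[ E_{s,t}^2(Z) = \pi_s\bigl(\cL_Z^B(A; Hk)\bigr) \otimes_k \pi_t\bigl(\cL_Z^R(B; Hk)\bigr) \;\Longrightarrow\; \pi_{s+t}\bigl(\cL_Z^R(A; Hk)\bigr). \]

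I would then observe that both $E^2$-factors are invariant when $Z$ is replaced by $Z'$. For the left factor, $B$ is an augmented commutative $A$-algebra, so by Proposition \ref{prop:augmented} the map $B \ra A$ is multiplicatively stable; since $\Sigma(X \times Y) \simeq \Sigma X \vee \Sigma Y \vee \Sigma(X \wedge Y) \simeq \Sigma(X \vee Y \vee X \wedge Y)$, it follows that $\cL_{X \times Y}^B(A; Hk) \simeq \cL_{X \vee Y \vee X \wedge Y}^B(A; Hk)$ as augmented commutative $Hk$-algebras. For the right factor, $Z$-to-$Z'$-invariance is precisely the hypothesis that $R \ra B \ra Hk$ decomposes products. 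Thus the two spectral sequences have isomorphic $E^2$-pages as bigraded $k$-algebras.

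It then remains to upgrade this $E^2$-isomorphism to an equivalence of abutments. Since both identifying equivalences live at the level of augmented commutative $Hk$-algebras, and since the augmentation $A \ra B \ra A$ gives a retract $\cL_Z^R(A; Hk) \ra \cL_Z^R(B; Hk) \ra \cL_Z^R(A; Hk)$ of augmented commutative $Hk$-algebras for each $Z$, the multiplicative structure and the retract should force the Greenlees differentials and extensions to match up, yielding $\pi_*\bigl(\cL_{X \times Y}^R(A; Hk)\bigr) \cong \pi_*\bigl(\cL_{X \vee Y \vee X \wedge Y}^R(A; Hk)\bigr)$; since $k$ is a field, this recovers a weak equivalence $\cL_{X \times Y}^R(A; Hk) \simeq \cL_{X \vee Y \vee X \wedge Y}^R(A; Hk)$ of underlying $Hk$-modules.

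The main obstacle will be exactly this last step: the hypothesis only produces an abstract equivalence of $\cL_Z^R(B;Hk)$ for the two choices of $Z$, so showing that it is compatible with the particular structure maps used to build the Greenlees spectral sequence (and hence with its differentials) is the delicate point. Here one must exploit that the $E^2$-identification we have is multiplicative and that the map $\cL_Z^R(B; Hk) \ra \cL_Z^R(A; Hk)$ feeding the spectral sequence is split by the unit $A \ra B$, which gives additional rigidity on both sides of the comparison.
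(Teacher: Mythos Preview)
Your approach differs from the paper's and the gap you flag at the end is real and, as set up, not easily closed. You are running the Greenlees spectral sequence with respect to the map $\cL_Z^R(B;Hk)\to\cL_Z^R(A;Hk)$ and then trying to compare two instances of it for $Z=X\times Y$ versus $Z'=X\vee Y\vee X\wedge Y$. The only link you have between these two spectral sequences is an \emph{abstract} equivalence on the $E^2$-factors, coming from the product-decomposition hypothesis on $B$ and from multiplicative stability of $B\to A$. That abstract equivalence is not induced by any map of the underlying towers or filtered objects, so there is no reason the differentials should correspond; the retract $A\to B\to A$ relates $A$ and $B$ for a \emph{fixed} $Z$, not the $Z$- and $Z'$-spectral sequences to one another.

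The paper sidesteps this entirely by running a different Greenlees spectral sequence: for a fixed pair $X,Y$ it uses the pushout coming from the cofibration $X\vee Y\hookrightarrow X\times Y$ with cofiber $X\wedge Y$, yielding
\[
\pi_s\bigl(\cL^R_{X\wedge Y}(-;Hk)\bigr)\otimes_k \pi_t\bigl(\cL^R_{X\vee Y}(-;Hk)\bigr)\ \Longrightarrow\ \pi_{s+t}\bigl(\cL^R_{X\times Y}(-;Hk)\bigr),
\]
one instance for $A$ and one for $B$. Now the retract $A\to B\to A$ is a map in the algebra variable, so by naturality it induces a genuine retract of spectral sequences. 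Since $B$ decomposes products, a dimension count over the field $k$ forces the $B$-spectral sequence to collapse at $E^2$; being a direct summand, the $A$-spectral sequence must also collapse, and (again since $k$ is a field) the resulting additive isomorphism lifts to a weak equivalence $\cL^R_{X\times Y}(A;Hk)\simeq \cL^R_{X\vee Y\vee X\wedge Y}(A;Hk)$. The moral: vary the algebra (where you have an honest natural retraction), not the space (where you only have an abstract equivalence).
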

\begin{proof}
  The naturality of the Loday construction ensures that the vertical
  compositions in the diagram 
  \[\xymatrix{
      \cL^R_{X \vee Y}(A; Hk) \ar[r]\ar[d]& \cL^R_{X \times Y}(A; Hk)
      \ar[r] \ar[d]& \cL^R_{X \wedge Y}(A; Hk) \ar[d] \\ 
      \cL^R_{X \vee Y}(B; Hk)\ar[r]\ar[d]& \cL^R_{X \times Y}(B; Hk)
      \ar[r] \ar[d]& \cL^R_{X \wedge Y}(B; Hk) \ar[d] \\ 
      \cL^R_{X \vee Y}(A; Hk)\ar[r]& \cL^R_{X \times Y}(A; Hk) \ar[r]
      & \cL^R_{X \wedge Y}(A; Hk)  
    } \]
  are the identity. Therefore the spectral sequence
  \[  \pi_s(\cL^R_{X \wedge Y}(A; Hk)) \otimes_k \pi_t(\cL^R_{X \vee
      Y}(A; Hk)) \Rightarrow \pi_{s+t}(\cL^R_{X \times Y}(A; Hk))  \]
  is a direct summand of the one for $B$. So if the spectral sequence
  for $A$ had a non-trivial differential, then the one for $B$ also
  had to have one, but as $B$ decomposess products, this cannot
  happen.  
\end{proof}
Note that this gives an additive splitting, but we can't rule out
multiplicative extensions.

If $B$ does not decompose products, then this does \emph{not} imply
that $A$ doesn't either. A concrete counterexample is $S \ra H\Q \ra
H\Q[t]/t^2 \ra H\Q$. Here, $S \ra H\Q[t]/t^2$ does not decompose
products, but $S \ra H\Q$ is even multiplicatively stable.

\section{Rational Equivalence} \label{sec:rational}
The starting point for this section is the following result:
\begin{proposition} \label{stableequiv}  (Berest, Ramadoss, and Yeung \cite{bry}) If $k$ is a field of
  characteristic zero and $A$ is a commutative Hopf algebra over $k$, then any rational equivalence
  $f \colon X \to Y$ between simply connected spaces induces a weak equivalence 
\begin{equation*}
f_* \colon \cL^{Hk}_X({HA}) \simeq \cL^{Hk}_Y({HA}).
\end{equation*}

If  $f \colon X \to Y$ is a rational equivalence between simply connected pointed spaces then $f$
induces a weak equivalence 
\begin{equation*}
f_* \colon \cL^{Hk}_X({HA; Hk}) \simeq \cL^{Hk}_Y({HA;Hk}).
\end{equation*}

\end{proposition}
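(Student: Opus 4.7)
The plan is to reduce the proposition to the representation-homological interpretation of higher Hochschild homology that Berest, Ramadoss and Yeung establish for commutative Hopf algebras. Their key identification is that, for a commutative Hopf algebra $A$ over a field $k$ of characteristic zero and a simply connected simplicial set $X$, there is a natural equivalence of commutative augmented $Hk$-algebras
\[\cL^{Hk}_X(HA) \simeq \HR(X,A),\]
where $\HR(X,A)$ denotes the derived representation homology of $X$ in the affine group scheme $G=\mathrm{Spec}(A)$. Concretely, $\HR(X,A)$ is computed by taking a commutative cdga (Sullivan) model $\mathcal{A}(X)$ of $X$ and forming the derived tensor product $\mathcal{A}(X)\otimes^{L} A$ in an appropriate sense (via a cofibrant resolution in $\cdgaQ$). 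I would first recall or reprove this identification, as it encodes the Hopf structure on $A$ via the group-scheme-valued functor of points.

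The second step is to invoke rational homotopy theory. Over a field of characteristic zero and for simply connected spaces, a rational equivalence $f\colon X\to Y$ induces a quasi-isomorphism of Sullivan models $\mathcal{A}(Y)\to\mathcal{A}(X)$. Since $\HR(-,A)$ is defined through a derived functor applied to such a commutative model, a quasi-isomorphism of models produces a quasi-isomorphism on representation homology. Composing with the identification above yields the desired
\[f_*\colon \cL^{Hk}_X(HA)\xrightarrow{\;\simeq\;}\cL^{Hk}_Y(HA).\]
The multiplicative structure is preserved because the identification with $\HR(X,A)$ is one of commutative $Hk$-algebras and the model-level quasi-isomorphism is a map of cdgas.

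For the second assertion, one uses the counit $\varepsilon\colon A\to k$ of the Hopf algebra to equip $Hk$ with the structure of a commutative augmented $HA$-algebra. The coefficient version $\cL^{Hk}_X(HA;Hk)$ is then identified with the pointed variant of representation homology, in which one restricts to representations sending the basepoint of $X$ to the identity of $G$; equivalently, one uses a Sullivan model of the pointed space $X$ together with the augmentation $A\to k$. Once again this construction only depends on a cofibrant commutative model of the pointed simply connected space $X$, and so a rational equivalence $f\colon X\to Y$ of simply connected pointed spaces induces the required equivalence $f_*\colon\cL^{Hk}_X(HA;Hk)\simeq\cL^{Hk}_Y(HA;Hk)$.

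The main obstacle is the representation-homological identification $\cL^{Hk}_X(HA)\simeq\HR(X,A)$, which is the genuinely nontrivial input from Berest--Ramadoss--Yeung; it requires rewriting the simplicial commutative $Hk$-algebra defining the Loday construction as a derived functor on commutative Hopf algebras, and matching it with the derived scheme of $G$-representations of a cofibrant cdga model. Once this identification is in place, the rational invariance in both statements is essentially formal, drawing only on the fact that simply connected spaces have quasi-isomorphic rational (Sullivan or Quillen) models whenever they are rationally equivalent, and that derived functors of such cdga models are homotopy invariant.
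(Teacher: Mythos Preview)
Your overall strategy coincides with the paper's: both reduce the statement to the two inputs from \cite{bry}, namely the identification of the Loday construction with representation homology and the rational invariance of the latter (their Proposition~4.2). The paper's proof is in fact nothing more than those two citations.

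Two points in your paraphrase need correction, though. First, the identification in \cite{bry} is not $\cL^{Hk}_X(HA)\simeq \HR(X,A)$ but rather $\pi_*\cL^{Hk}_X(HA)\cong \HR_*(\Sigma(X_+),A)$ in the unpointed case and $\pi_*\cL^{Hk}_X(HA;Hk)\cong \HR_*(\Sigma X,A;k)$ in the pointed case; the suspension is part of the statement. Second, representation homology is not obtained by tensoring a Sullivan model of the space with $A$; it is defined through the derived representation scheme of a simplicial group model in the affine group scheme $\mathrm{Spec}(A)$. Your sketch of the rational invariance via cdga models is therefore not quite on target --- but since you explicitly defer that step to \cite{bry}, and the paper does the same, the argument stands.
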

\begin{proof}
  This follows from \cite[Theorem 1.3 (a)]{bry} which says that for such $k$ and $A$ and any unbased
  simplicial set $X$, 
\[\pi_* \cL^{Hk}_X({HA}) \cong \HR_*(\Sigma(X_+), A)\]
where $\HR$ is representation homology, and from  \cite[Proposition 4.2]{bry}, which says that
rational equivalences 
between simply connected spaces induce isomorphisms on representation homology for such $k$ and $A$.
In the pointed setting, \cite[Theorem 1.3 (b)]{bry} applies to give the equivalence
\[ \pi_*\cL^{Hk}_X({HA; Hk}) \cong \HR_*(\Sigma X, A; k)\]
\end{proof}

% We can, for example look at the rational equivalence $\bR P^{2n}\to *$ for any $n>0$ obtaining
% \begin{corollary}
% For any Hopf algebra $A$ over a field $k$ of characteristic zero and any $n>0$,
% we have a weak equivalence $\cL^{Hk}_{\bR P^{2n}}({HA}) \simeq HA$.
% \end{corollary}

% Or we can look at the projection of the Klein bottle $K\ell$ to the circle, and obtain
% \begin{corollary}
% For any Hopf algebra $A$ over a field $k$ of characteristic zero
% we have a weak equivalence $\cL^{Hk}_{K\ell}({HA}) \simeq \cL^{Hk}_{S^1}({HA})=\THH^{Hk}(HA)$.
% \end{corollary}

We can extend 
Proposition \ref{stableequiv} % implies, of course, that for $k$ a field of characteristic zero and $A$
% a Hopf algebra over $k$, any rational equivalence $f \colon  X\to Y$ induces a weak equivalence 
% $\cL^{Hk}_X({HA}; Hk) \simeq \cL^{Hk}_Y({HA};Hk)$, but we can use that to get a similar result for any
to augmented commutative 
finitely generated $k$-algebras:

\begin{proposition}
  If $k$ is a field of characteristic zero and $A$ is a finitely generated augmented commutative
  $k$-algebra,
  then  any rational equivalence $f \colon  X\to Y$ of simply connected spaces induces a weak
  equivalence 
\begin{equation*}
f_* \colon  \cL^{Hk}_X({HA};Hk) \simeq \cL^{Hk}_Y({HA};Hk).
\end{equation*}
\end{proposition}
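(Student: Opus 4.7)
The strategy is to reduce the result to the Hopf-algebra case of Proposition \ref{stableequiv} via a simplicial resolution of $A$ by free commutative algebras, which are canonically Hopf algebras with primitive generators.

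First I would take a simplicial resolution $F_\bullet \to A$ of the augmented commutative $k$-algebra $A$ through polynomial augmented $k$-algebras, for instance the cotriple resolution arising from the free-forgetful adjunction between augmented commutative $k$-algebras and pointed $k$-vector spaces. Each $F_p = k[V_p]$, with augmentation $V_p \mapsto 0$, is a commutative Hopf algebra with primitive generators. Consequently, Proposition \ref{stableequiv} applies at each simplicial level and yields, naturally in $p$, a weak equivalence
\[ f_* \colon \cL^{Hk}_X(HF_p; Hk) \xrightarrow{\simeq} \cL^{Hk}_Y(HF_p; Hk) \]
for every rational equivalence $f \colon X \to Y$ between simply connected pointed spaces.

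Next I would show that $|HF_\bullet|$ serves as a cofibrant model for $HA$ as an augmented commutative $Hk$-algebra spectrum, using that in characteristic zero simplicial augmented commutative $k$-algebras are Quillen equivalent to augmented commutative $Hk$-algebra spectra via the Eilenberg--MacLane functor. Since $\cL^{Hk}_X(-; Hk)$ is homotopy invariant on cofibrant inputs and commutes with geometric realization of simplicial augmented commutative $Hk$-algebras (being the tensor of $X$ with the algebra in that category, hence a left adjoint), we obtain
\[ \cL^{Hk}_X(HA; Hk) \simeq |\cL^{Hk}_X(HF_\bullet; Hk)|, \]
and analogously for $Y$. Passing to the realization of the levelwise equivalences above then produces the desired weak equivalence.

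The main obstacle will be the model-categorical bookkeeping: verifying that $\cL^{Hk}_X(-; Hk)$ commutes in the correct homotopical sense with the realization of the simplicial resolution. This requires either arguing abstractly via the tensoring of $X$ with commutative algebras in a suitable homotopy category, or checking that $\cL^{Hk}_X(HF_\bullet; Hk)$ is Reedy cofibrant as a simplicial object so that its realization computes the right homotopy colimit. Note that the finitely generated hypothesis on $A$ plays essentially no role in the argument beyond ensuring that a resolution with manageable levels exists, since Proposition \ref{stableequiv} already accommodates arbitrary commutative Hopf algebras.
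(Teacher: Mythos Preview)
Your approach is correct in outline, but it differs substantially from the paper's argument, and the comparison is instructive.

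The paper exploits the finite-generation hypothesis directly: choosing generators $a_1,\ldots,a_\ell$ of $A$ yields a surjection of augmented $k$-algebras $\varphi\colon k[x_1,\ldots,x_\ell]\to A$, and since $k[x_1,\ldots,x_\ell]$ is Noetherian the kernel is finitely generated by $f_1,\ldots,f_m$. One then writes $A$ as the pushout of $k \leftarrow k[u_1,\ldots,u_m]\to k[x_1,\ldots,x_\ell]$ in augmented commutative $k$-algebras, where $u_j\mapsto 0$ on the left and $u_j\mapsto f_j$ on the right. The three corners other than $A$ are polynomial algebras or $k$ itself, hence commutative Hopf algebras, so Proposition~\ref{stableequiv} applies to them; since $\cL^{Hk}_X(H(-);Hk)$ takes pushouts of augmented commutative $k$-algebras to homotopy pushouts (Lemma~\ref{lem:pushouts}), the map $f_*$ is a weak equivalence on $A$ as well.

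Your route via a full simplicial cotriple resolution is a legitimate alternative and, as you note, does not use finite generation at all. The trade-off is exactly the one you identify: the paper's single-pushout argument needs only Lemma~\ref{lem:pushouts} and no realization or Reedy-cofibrancy arguments, while your approach requires checking that $\cL^{Hk}_X(-;Hk)$ commutes homotopically with geometric realization of the resolution. That step is doable (characteristic zero gives the Quillen equivalence between simplicial commutative $k$-algebras and commutative $Hk$-algebras, and tensoring with $X$ in augmented algebras is a left Quillen functor), but it is more machinery than the paper needs for the finitely generated case. In short, the paper trades generality for a two-line homotopical argument; your proposal buys the stronger statement at the cost of the model-categorical verification you flag.
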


\begin{proof}
Let $A$ be generated by $a_1, a_2,\ldots, a_\ell$ as a commutative $k$-algebra, let
  $\varepsilon \colon  A\to k$ be its augmentation, and let $\eta \colon k\to A$ be the unit map. 
We denote by $I$ the augmentation ideal, $I=\ker\varepsilon$. Then for all $1\leq i\leq\ell$,
$a_i-\eta(\varepsilon(a_i))\in I$, so we can define a surjection of augmented commutative $k$-algebras 
\[\varphi \colon k[x_1,\ldots, x_\ell] \to A,
\quad \varphi(x_i)=a_i-\eta(\varepsilon(a_i)) \text{ for all }   1\leq i\leq\ell. 
\]
Here  we consider the augmentation of $ k[x_1,\ldots, x_\ell]$ that sends every $x_i$ to zero, so that
its augmentation ideal is $(x_1, \ldots, x_\ell)$. 

Since $k$ is a field,  $k[x_1,\ldots, x_\ell]$ is Noetherian so we can find finitely many polynomials
$f_1, \ldots, f_m$ in the $x_i$ to generate $\ker \varphi$.  Since $\varepsilon \circ \varphi$ is the
augmentation of $k[x_1,\ldots, x_\ell]$,  we get that for all $1\leq j\leq m$,  $f_j$ is an element
in $(x_1,\ldots, x_\ell)$.
Hence, we can define another map of augmented commutative $k$-algebras
\[\psi \colon   k[u_1,\ldots, u_m] \to k[x_1,\ldots, x_\ell], 
\quad \psi(u_i)=f_j \text{ for  all }   1\leq j \leq m,
\]
which maps $k[u_1,\ldots, u_m] $ onto $\ker \varphi$.  The augmentation of $k[u_1,\ldots, u_m]$  is
again the standard one. We express $A$ as a  pushout of commutative augmented $k$-algebras
\[ \xymatrix{  k[u_1,\ldots, u_m]  \ar[r]^\psi  \ar[d] &
  k[x_1,\ldots, x_\ell]   
   \ar[d] ^\varphi \\
k \ar[r] ^\eta & A  } \]
where all entries except $A$ are known to be commutative Hopf algebras over $k$. So for them, $f$
induces a weak equivalence 
$\cL^{Hk}_X (H(-); Hk) \to \cL^{Hk}_Y (H(-); Hk)$.
Since both  $\cL^{Hk}_X (H(-); Hk) $ and $\cL^{Hk}_Y (H(-); Hk)$ send pushouts of augmented commutative $k$-algebras to
homotopy pushouts of augmented commutative $Hk$-algebras, $f$ also induces a weak equivalence on the pushout.
\end{proof}

% \begin{corollary}
%   For any finitely generated augmented commutative algebra $A$ over a field $k$ of characteristic zero
% and any $n>0$,
% we have a weak equivalence $\cL^{Hk}_{\bR P^{2n}}({HA}; Hk ) \simeq Hk$.
% \end{corollary}

% \begin{corollary}
% For any finitely generated augmented algebra $A$ over a field $k$ of characteristic zero
% we have a weak equivalence $\cL^{Hk}_{K\ell}({HA};Hk) \simeq \cL^{Hk}_{S^1}({HA}; Hk)=\THH^{Hk}(HA;
% Hk)$.
% \end{corollary}
% This last Corollary generalizes the results of \cite[Section 2.2]{hklrz}.

Let $X$ be a pointed simply connected simplicial set. Then rationally
\[ \Sigma X_\Q \simeq \bigvee_{i \in I} S^{k_i}_\Q \]
for some indexing set $I$ and some $k_i \geq 2$ (see for instance \cite[Theorem 24.5]{fht}). In
particular, 
\[ \bigvee_{i \in I} S^{k_i}_\Q \simeq \Sigma\left(\bigvee_{i \in I} S^{k_i-1}_\Q\right). \]
So with the help of \cite[Theorem 1.3]{bry} we obtain:
\begin{theorem}
  For every pointed simply connected $X$, every field of characteristic zero $k$ and every commutative
  Hopf-algebra $A$ over $k$, for a suitable indexing set $I$ and integers $k_i \geq 2$ we get
  \[ \pi_*\cL^k_{X_\Q}(A; k) \cong \pi_*\cL^k_{\bigvee_{i \in I} S^{k_i-1}_\Q}(A; k). \]
\end{theorem}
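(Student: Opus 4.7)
The plan is to combine the rational decomposition of suspensions of simply connected spaces with the stability result for commutative Hopf algebras over a field that was recalled early in the paper. Since $X$ is pointed and simply connected, so is its rationalization $X_\Q$, and the rational splitting cited just before the theorem (see \cite[Theorem 24.5]{fht}) gives an equivalence $\Sigma X_\Q \simeq \bigvee_{i \in I} S^{k_i}_\Q$ in $\ssets_*$ for a suitable indexing set $I$ and integers $k_i \geq 2$. Because $\Sigma$ commutes with wedges, we may rewrite this as
\[ \Sigma X_\Q \simeq \Sigma\left(\bigvee_{i \in I} S^{k_i-1}_\Q\right). \]

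Next I would invoke the stability statement for commutative Hopf algebras over a field recorded in the examples of Section~2: by Berest--Ramadoss--Yeung \cite[\S 5]{bry} (or by the homotopy-theoretic proof \cite[Theorem 3.8]{hklrz}), for any field $k$ and any commutative Hopf algebra $A$ over $k$, the triple $(Hk, HA, Hk)$ is stable. Applied to the equivalence above, this yields an equivalence of commutative augmented $Hk$-algebra spectra
\[ \cL^{Hk}_{X_\Q}(HA; Hk) \simeq \cL^{Hk}_{\bigvee_{i \in I} S^{k_i-1}_\Q}(HA; Hk), \]
and taking homotopy groups gives the claim after translating back to the paper's shorthand $\cL^k_{(-)}(A; k) = \cL^{Hk}_{(-)}(HA; Hk)$.

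The only minor point to verify is that stability, as stated, does apply to the (generally infinite) rational simplicial sets $X_\Q$ and $\bigvee_{i \in I} S^{k_i-1}_\Q$. This is handled by Section~\ref{defofcl}, where arbitrary pointed simplicial sets are written as filtered colimits of their finite pointed subcomplexes and the Loday construction is defined as the corresponding colimit; the stability equivalence then passes through these colimits. There is no real obstacle in the argument: the work has already been done in the splitting of suspensions rationally and in the Hopf-algebra stability, and the present theorem is essentially a repackaging that combines them.
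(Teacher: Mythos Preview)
Your proposal is correct and follows essentially the same route as the paper: the paper's argument (given in the paragraph immediately preceding the theorem) is exactly the two-step combination you describe, namely the rational splitting $\Sigma X_\Q \simeq \Sigma\bigl(\bigvee_{i \in I} S^{k_i-1}_\Q\bigr)$ from \cite[Theorem 24.5]{fht} together with the Hopf-algebra stability result \cite[Theorem 1.3]{bry}. Your additional remark about passing to filtered colimits for infinite $X_\Q$ is a helpful technical clarification that the paper leaves implicit.
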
  

For simply-connected spaces and $k$ and $A$ as above we know by \cite[Proposition 4.2]{bry} that
the homotopy type of the Loday construction only depends on the rational homotopy type of the suspension,
so we can
discard the rationalization in the above statement. This yields, for instance:
\begin{example}
  Let $X = \mathbb{C}P^n$ for some $n \geq 1$.  Then for every field of characteristic zero $k$ and every
  commutative Hopf-algebra $A$ over $k$., as $\Sigma\bC P^n_\Q \simeq \Sigma \bigvee_{i=1}^n S^{2i}_\Q$, we
  obtain 
  \[ \pi_*\cL^k_{\bC P^n}(A; k) \cong \pi_*\cL^k_{\bigvee_{i=1}^n S^{2i}}(A; k). \]
\end{example}


\begin{thebibliography}{9999999}
\bibitem{redshift}
Christian Ausoni, John Rognes,   
\emph{The chromatic red-shift in algebraic K-theory}, 
Enseign. Math. 54 (2), (2008), 9--11. 

 

\bibitem{bry} 
Yuri Berest, Ajay C.~Ramadoss and Wai-Kit Yeung, \emph{Representation
  homology of topological spaces},  arXiv:1703.03505v3, to appear in IMRN, 
\url{https://doi.org/10.1093/imrn/rnaa023}


\bibitem{bhlprz}
  Irina Bobkova, Eva H\"oning, Ayelet Lindenstrauss, Kate Poirier,
  Birgit Richter and Inna Zakharevich,
  \emph{Splittings and calculational techniques for higher $\THH$},
  Algebraic \& Geometric Topology 19 (7) 2019, 3711--3753.  


\bibitem{bousfield}
  Aldridge K.~Bousfield,
  \emph{The localization of spectra with respect to homology},
  Topology 18 (1979), no. 4, 257--281.

\bibitem{chy}
  Hood Chatham, Jeremy Hahn, Allen Yuan,
  \emph{Wilson Spaces, Snaith Constructions, and Elliptic Orientations},
arXiv:1910.04616. 
  
\bibitem{cmnn}  
  Dustin Clausen, Akhil Mathew, Niko Naumann and Justin Noel,
\emph{Descent in algebraic K-theory and a conjecture of Ausoni-Rognes},   
Journal of the European Mathematical Society, Volume 22, Issue 4, (2020), 1149--1200. 

  
\bibitem{dgm}
  Bj{\o}rn Ian Dundas, Thomas Goodwillie, Randy McCarthy, 
  \emph{The local structure of algebraic K-theory}, Algebra and
  Applications, 18. Springer-Verlag London, Ltd., London, 2013. xvi+435 pp. 

\bibitem{dlr}
Bj{\o}rn Ian Dundas, Ayelet Lindenstrauss,  Birgit Richter,
\emph{On higher topological Hochschild homology of rings of integers},
Mathematical Research Letters 25, 2 (2018), 489--507. 
  
\bibitem{dt}
  Bj{\o}rn Ian Dundas, Andrea Tenti, \emph{Higher Hochschild homology is not 
a stable invariant}, Mathematische Zeit\-schrift, Volume 290, Issue 1-2, 145--154. 

\bibitem{ekmm}
Anthony D.~Elmendorf, Igor Kriz, Michael A.~Mandell, J.~Peter May, 
\emph{Rings, modules, and algebras in stable homotopy theory}. 
With an appendix by M. Cole. Mathematical Surveys and Monographs, 47. 
American Mathematical Society, Providence, RI, (1997), xii+249. 

\bibitem{fht}
  Yves  F\'elix, Stephen Halperin, Jean-Claude Thomas, \emph{Rational homotopy theory},
  Graduate Texts in Mathematics, 205. Springer-Verlag, New York, (2001), xxxiv+535 pp.

\bibitem{greenlees}
  John P.~C.~Greenlees, \emph{Ausoni-B\"okstedt duality for
    topological Hochschild homology}, 
  J. Pure Appl. Algebra 220 (2016), no. 4, 1382--1402.

\bibitem{hhlrz}
  Gemma Halliwell, Eva H\"oning, Ayelet Lindenstrauss, Birgit Richter,
  Inna Zakharevich, 
  \emph{Relative Loday constructions and applications to higher
    $\mathsf{THH}$-calculations}, 
  Topology Appl. 235 (2018), 523--545. 

\bibitem{hklrz}
Alice Hedenlund, Sarah Klanderman, Ayelet Lindenstrauss, Birgit Richter, Foling Zou,
\emph{Loday constructions on twisted products and on tori}, arXiv:2002.00715. 

\bibitem{hm}
Lars Hesselholt, Ib Madsen,
On the K-theory of finite algebras over Witt vectors of perfect
fields,  Topology 36 (1997), no. 1, 29--101.


\bibitem{loday}
Jean-Louis Loday,  \emph{Cyclic homology}, Appendix E by Mar\'ia
O. Ronco. Second edition. Chapter 13 by the author in collaboration
with Teimuraz Pirashvili. Grundlehren der Mathematischen
Wissenschaften 301. Springer-Verlag, Berlin (1998),  xx+513 pp. 

\bibitem{lurie}
  Jacob Lurie, \emph{Higher Algebra}, book draft available at
  \url{http://www.math.harvard.edu/~lurie/}
  
\bibitem{mathew}  
  Akhil Mathew, \emph{$\THH$ and base-change for Galois extensions of
    ring spectra}, Algebr. Geom. Topol. 17 (2017), no. 2, 693--704. 

  
\bibitem{mccm} 
  Randy McCarthy, Vahagn Minasian, \emph{HKR theorem for smooth $S$-algebras},
  J. Pure Appl. Algebra 185 (2003), no. 1-3, 239--258. 

\bibitem{rsv}
  Nima Rasekh, Bruno Stonek, Gabriel Valenzuela, \emph{Thom spectra, higher $\THH$ and tensors in $\infty$-categories},
   	arXiv:1911.04345. 

  
\bibitem{rognes} 
  John Rognes, \emph{Galois extensions of structured ring spectra. Stably dualizable groups},
  Mem. Amer. Math. Soc. 192 (2008), no. 898, viii+137 pp. 
  
\bibitem{schlichtkrull}
  Christian Schlichtkrull, \emph{Higher topological Hochschild homology of Thom spectra},
  J. Topol. 4 (2011), no. 1, 161--189.

\bibitem{snaith}
  Victor P.~Snaith, \emph{Algebraic cobordism and $K$-theory}, Mem. Amer. Math. Soc. 21 (1979),
  no. 221, {\rm vii}+152 pp.  

  
\bibitem{stonek}
Bruno Stonek, \emph{Higher topological Hochschild homology of periodic complex K-theory}, 
  arXiv:1801.00156

\bibitem{veen}
  Torleif  Veen, \emph{Detecting periodic elements in higher topological Hochschild homology},
  Geom. Topol. 22 (2018), no. 2, 693--756.  

\bibitem{wilson}
  W. Stephen Wilson, \emph{The $\Omega$-spectrum for Brown-Peterson cohomology. II},
  Amer. J. Math. 97 (1975), 101--123. 
\end{thebibliography}
\end{document}